\newtheorem{theorem}{Theorem}[section]
\newtheorem{lemma}[theorem]{Lemma}
\newtheorem{corollary}[theorem]{Corollary}
\newdefinition{remark}{Remark}
\newdefinition{definition}{Definition}
\newproof{proof}{Proof}
\newcommand{\N}{\mathbb{N}}
\newcommand{\R}{\mathbb{R}}
\newcommand{\norm}[2]{\left\|#1\right\|_{#2}}
\renewcommand{\L}{\mathrm{L}}
\renewcommand{\O}{\Omega}
\newcommand{\Lr}{\L^2_r}
\newcommand{\LrO}{\Lr(\O)}
\newcommand{\drdz}{\dr\dz}
\newcommand{\dr}{\;dr}
\newcommand{\dz}{dz}
\newcommand{\ds}{\;ds}
\newcommand{\rH}{\mathrm H}
\newcommand{\rL}{\mathrm L}
\newcommand{\pdr}{\partial_r}
\newcommand{\pdz}{\partial_z}
\newcommand{\Dt}{\Delta t}
\newcommand{\Om}{\Omega}
\newcommand{\hists}{\mathcal{F}}
\newcommand{\Preit}{\mathcal{F}_{_{\!\!D}}}
\newcommand{\PTr}{\mathcal{T}}
\newcommand{\xin}{\xi}
\newcommand{\vpu}{\rho_1}
\newcommand{\vpd}{\rho_2}
\newcommand{\vpc}{\rho_0}
\newcommand{\hvp}{h_{\rho}}
\newcommand{\dhvp}{\eta_{\rho}}
\newcommand{\FndD}{F}
\newcommand{\cta}{\omega}
\newcommand{\pdisf}{p}
\newcommand{\dro}{\,d\rho}
\newcommand{\calV}{\mathcal{V}}
\def\rou{\rho_1}
\def\rod{\rho_2}
\def\Preic{\mathcal{F}_{_{\!\!S}}}
\begin{document}

\begin{frontmatter}

\title{Mathematical analysis and numerical solution of models with dynamic
Preisach hysteresis}
\tnotetext[mytitlenote]{The work of the authors from Universidade de Santiago de Compostela
was supported by Ministerio de Econom\'{\i}a y Competitividad (Spain)
under the research project ENE2013--47867--C2--1--R and by Ministerio de Econom\'{\i}a,
Industria y Competitividad (Spain) under the research project MTM2017-86459-R.
 P.~Venegas was partially supported by FONDECYT project 11160186.}

\author[USC]{A.~Berm\'udez}
\ead{alfredo.bermudez@usc.es}

\author[USC]{D.~G\'omez}
\ead{mdolores.gomez@usc.es}

\author[UBB]{P.~Venegas}
\ead{pvenegas@ubiobio.cl}

\address[USC]{Departamento de Matem\'atica
Aplicada, Universidade de Santiago de Compostela,
E-15782 Santiago de Compostela, Spain}

\address[UBB]{GIMNAP, Departamento de Matem\'atica,
Universidad del B{\'\i}o-B{\'\i}o,  Concepci\'on, Chile}



\begin{abstract}
 Hysteresis is a phenomenon that is observed in a great variety of physical systems,
which leads to a nonlinear and multivalued behavior, making their modeling
and control difficult. Even though the analysis and mathematical properties of
classical or rate-independent hysteresis models are known, this is not the case
for dynamic models where  current approaches
lack a proper functional analytic framework which is essential to
formulate optimization problems and develop stable numerics, both being crucial in
practice. This paper deals with the description and mathematical analysis of the
dynamic Preisach hysteresis model. Toward that end, we complete a widely
accepted definition of the dynamic model  commonly
used to describe the constitutive relation between the magnetic field
H and the magnetic induction B, in which, the values of B not only depends on
the present values of H but also on the past history and its velocity.
We first analyze mathematically some important properties of the
model and compare them with known results for the static Preisach model.
Then, we consider a parabolic problem with dynamic hysteresis motivated by electromagnetic field equations. Under suitable assumptions, we show the well posedness of a
weak formulation of the problem and solve it numerically.
 Finally, we report a numerical test in order to assess
the order of convergence and to illustrate the behavior
of the numerical solution for different
configurations of the dynamic Preisach model.
\end{abstract}

\begin{keyword}
hysteresis \sep dynamic Preisach model \sep transient eddy current \sep  finite element method
\end{keyword}

\end{frontmatter}


\section{Introduction}

Hysteresis is a nonlinear behavior exhibited by some media characterized by a special memory-based property according
to which their response to particular changes is a function of the preceding responses.
This characteristic is known as \textit{memory effect}.
 A survey of hysteresis models may be found, for instance, in  \cite{Macki1993,Hassani2014,Visintin2006}.
 Detailed studies of these models have been done by different authors. From the mathematical point of view,
we refer to the pioneering work of  Krasnosel'ski\u{i} and Pokrovski\u{i}~\cite{KP89}, who introduced the
fundamental concept of hysteresis operator and conducted a systematic analysis of the mathematical properties of these objects.

Models of hysteresis can be divided into two main classes:  \textit{static} or \textit{rate-independent} and
\textit{dynamic} or \textit{rate-dependent} models.
In static models, the values of output depend just on the range of the input and on the order in which
values have been attained; in particular, the speed of the input has no influence~\cite{Vi94}. As a consequence,
the model cannot reflect the dependence with frequency or field waveform. This is a fundamental property of
classical hysteresis phenomenon; in this sense, static models are also known as \textit{rate-independent}
models and some authors~\cite{Vi94} define hysteresis as a rate-independent memory only.  Conversely, in
 dynamic hysteresis, the effect of the speed of changes of the applied input is added to the model.
That is why dynamic models are also known as \textit{rate-dependent} models.

The hysteresis phenomenon has been observed for a long time in many different areas of science and engineering.
 The term was initially coined in the area of magnetism~\cite{Ewing1885} given that many ferromagnetic materials
present hysteresis behavior that is reflected in the magnetization curves describing the magnetic response of the
material to an applied magnetic field.  From the electrical engineering point of view, having a good hysteresis model
is fundamental to, for instance, correctly estimate the energy losses in electrical machines, a very important
characteristic to take into account when designing an electrical device; in particular, the so-called hysteresis and
excess losses~\cite{BDGV2017}. Consequently, building a mathematical model of this relation is a very important
(and difficult) task and numerical simulation of devices involving ferromagnetic materials is still quite a challenge.

One the of the most popular hysteresis operators among scientists and engineers is the  classical Preisach
model~\cite{PreisachF}, a rate-independent  model based on physical assumptions motivated by the concept
of magnetic domains. Even if this model was first suggested in the area of ferromagnetism~\cite{M91,Bertotti1998,PDCM94,T99},
nowadays Preisach type operators~\cite{Bott2000} are recognized as a fundamental tool for describing a wide range
of hysteresis phenomena in different subjects as  elastoplasticity~\cite{Lubarda93}, solid phase transitions~\cite{BS96},
shape memory alloys~\cite{RAO2013}, hydrology~\cite{K2011}, fluid flow in porous
media~\cite{Schweizer2017}, infiltration \cite{Ma2008}, batteries~\cite{BFSVZ2014}, economics or
biology~\cite{Kreji_okane_2012}, among others.

In the case of ferromagnetism, the original formulation of the classical Preisach model allows us to simulate
scalar and rate-independent hysteresis relationship between the magnetic field $H$ and the magnetic flux
density $B$ (or the magnetization $M$). Nevertheless, in many applications, the magnetization evolution
was found to be dependent on the rate of applied field and thus, at  present, there are several extensions of
this classical Preisach model to behave like a dynamic model. They are generically called \textit{dynamic Preisach models}
(see, for instance,  \cite{Mayergoyz1988,B92,Yu2002,DBBFM_2000}).
 The analysis and mathematical properties of classical Preisach  hysteresis models are well-known~\cite{BS96,Vi94}.
However, rigorous mathematical analysis of rate-dependent hysteresis models is still largely open, despite of their
obvious importance in applications. In particular, we focus on  the dynamic Preisach model proposed by Bertotti in
 \cite{B92}. As we will see later, this model is based on a  nonlinear ordinary differential equation, but it was
not clear that a function satisfying this problem existed.

In recent years, mathematical analysis of rate-independent models of hysteresis  coupled with partial differential
 equations has been progressing; see among others the works by Visintin~\cite{Vi94,Visintin2006}, Eleuteri~\cite{M2007,Eleuteri_Kopfova_2010},
Showalter~\cite{Showalter96}, Berm\'udez~\cite{BGRV2014}, Krej\v{c}\'i~\cite{Kr99} and Brokate and
Sprekels~\cite{BS96}, Mielke~\cite{MIELKE2007}, Gurevich~\cite{Gurevich2009}.
for a physical point of view. In particular, the former authors focus on the study of hysteresis in the area of
magnetism where the classical Preisach model in considered.
Even though there are several publications devoted to the numerical solution of partial differential equation
with dynamic Preisach models~\cite{Dupre_dyn_pre_1998,Kuczmann2014,Basso1997},  to the best of the
author's knowledge, mathematical analysis of the dynamic hysteresis models presented in this paper has not been done yet.

The goal of this work is twofold:  the mathematical study of the dynamic Preisach model of hysteresis
 proposed in \cite{B92} and the mathematical analysis  and numerical solution of
parabolic problems with dynamic hysteresis motivated by electromagnetic field equations. With this in mind,
 we first formalize the definition of the dynamic Preisach model, more specifically we focus on the \textit{dynamic relay} which
 is introduced as the solution of a multi-valued ordinary differential equation. The mathematical analysis of this
 equation  is performed and  we prove some properties of the dynamic relay that allow us to obtain  mathematical
 properties of the dynamic Preisach model. From these properties and by applying the same techniques as for
problems modelled by the classical Preisach operador (see, for instance \cite{Vi94}),  we prove existence of
solution of a parabolic equation including dynamic hysteresis.

The paper is organized as follows:  in Section~\ref{sec:relays} we briefly recall the definition of the
rate-independent relay operator and the classical Preisach operator; then, a new formulation of the
dynamic (rate-dependent) relay operator is provided and some properties are proved. In Section~\ref{DPM},
by using the  dynamic relay, we recall the definition of the dynamic Preisach operator and prove some of its
properties. An  abstract parabolic problem with dynamic hysteresis is stated in Section~\ref{PWH}. From the
properties proved for the dynamic Preisach model, an existence result is derived. Finally, in Section~\ref{Num}
we introduce a numerical scheme to approximate the parabolic problem. We report two numerical tests: one in
order to assess the order of convergence and another one to illustrate the behavior of the numerical solution for
 different configurations of the dynamic Preisach model. Finally, some conclusions are drawn.

\section{Relays and Preisach operators}\label{sec:relays}
\setcounter{equation}{0}
 The  Preisach model describes the hysteresis using a superposition of elementary hysteresis operators called
\textit{relay operators}. The model assumes that the material consists of an infinite number of (magnetic)
particles each one characterized by a relay so the whole system can be modeled by a weighted parallel
connections of these relays. The weight function works as a local influence of each operator in the overall
hysteresis model and it is estimated from measured data.

Depending on the characteristics of the relay and on the nature of the connection between them, different
Preisach (or Preisach-type) models can be obtained. In what follows we briefly recall the definition of the
rate-independent relay which is the basis  of the classical Preisach model. Then, we introduce a
rate-dependent or dynamic relay leading to the definition of the \textit{dynamic Preisach} model introduced
in \cite{B92}.
\subsection{The static (rate-independent) relay}
In the classical (rate-independent) Preisach model, the output of each relay is represented by an elementary
rectangular loop on the input-output diagram $(u,v)$ (see Figure~\ref{h_fig:relay} (left)), with transition
thresholds at $\rou$ and $\rod$.
\begin{figure}[H]
\begin{center}
$\vcenter{\hbox{\includegraphics*[width=0.4\textwidth]{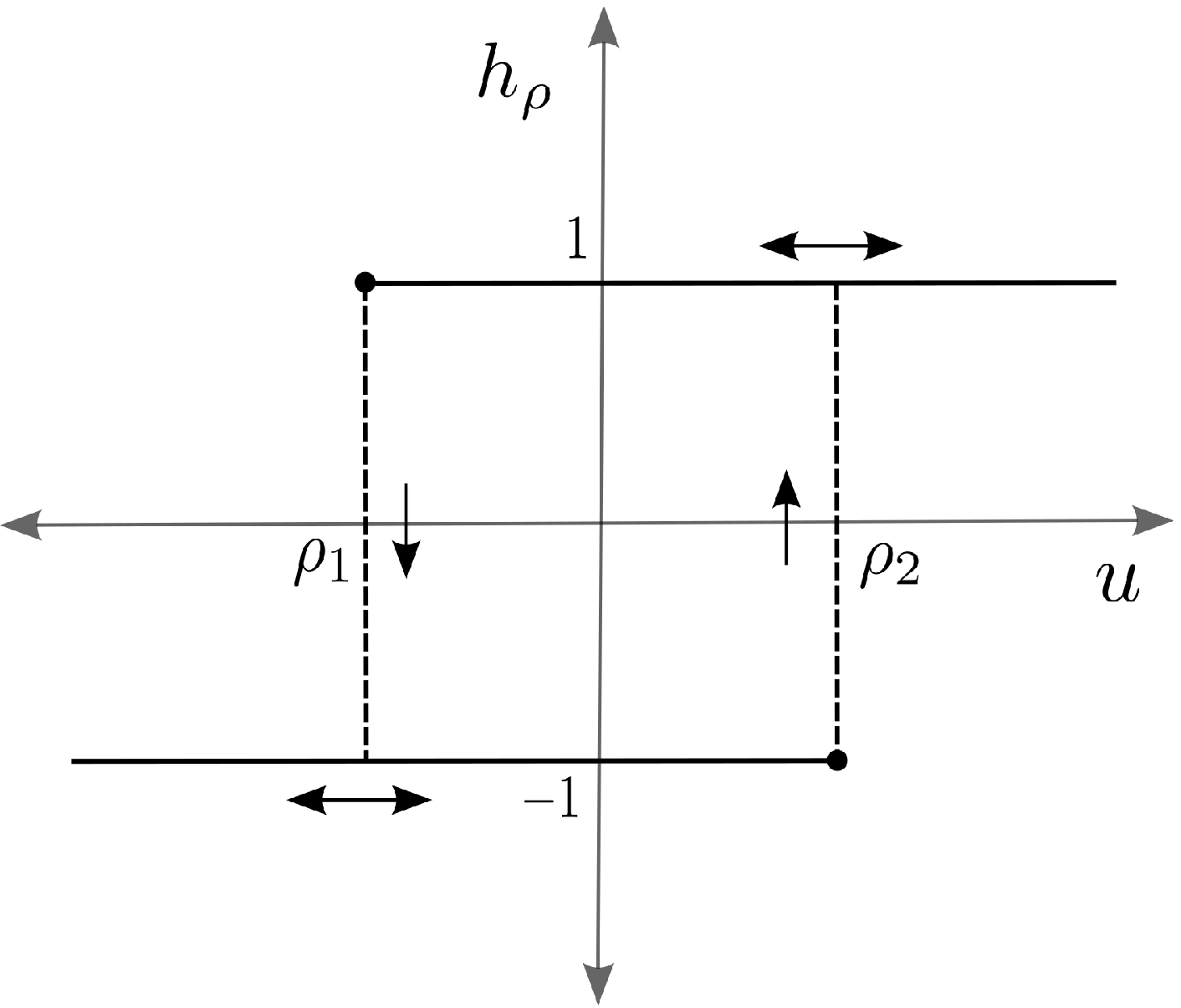}}}$ \qquad
$\vcenter{\hbox{ \includegraphics*[width=0.4\textwidth]{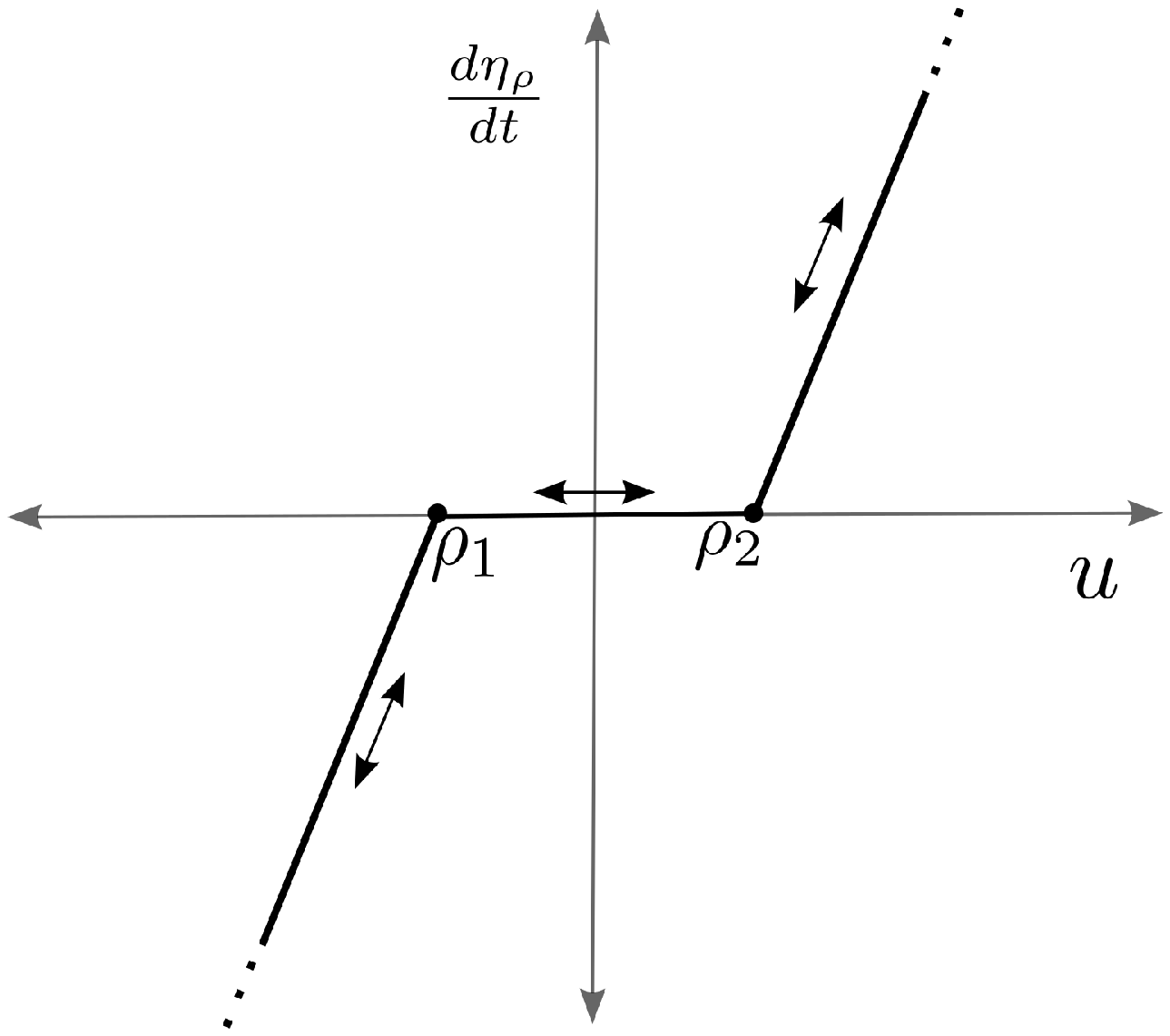}}}$
\caption{Relay in the $(u,\hvp)$-plane for the classical Preisach model (left)  and relay slope
 for the dynamic Preisach model for the case $|\dhvp| < 1$(right).
The arrows mark the authorized paths.
}
\label{h_fig:relay}
\end{center}
\end{figure}
Formally, given any couple $\rho=(\vpu,\vpd)$, such that $\vpu<\vpd$,  the corresponding relay operator
$\hvp$ is defined as follows: for any $u \in C([0,T])$ and $\xi \in\{1,-1\}$ (an initial condition),
$\hvp(u,\xi)$ is a function from $[0,T]$ to $\R$ such that,
$$
\hvp(u,\xi)(0):=\left\{\begin{array}{rl}
                  -1 & \quad\mathrm{if}\; u(0)\leq \vpu, \\
                  \xi &\quad \mathrm{if}\; \vpu<u(0)< \vpd, \\
                  1 &\quad  \mathrm{if}\; u(0)\geq \vpd. \\
                \end{array}\right.
$$
Then, for any $t \in (0,T]$, let us set $X_u(t):=\{\tau \in \,(0, t] : \,\,u(\tau) =  \rou \,\,\mbox{or}\,\, \rod\}$.
This set  keeps account of the previous instants in which $u$ has got the thresholds
$\vpu$ or $\vpd$. We define
\begin{equation}\label{h_rho}
\hvp(u,\xi)(t):=\left\{\begin{array}{cl}
                  h_{\rho}(u,\xi)(0) & \quad \mathrm{if}\,\, X_u(t)=\emptyset\,, \\
-1 & \quad \mathrm{if}\,\, X_u(t) \neq \emptyset \quad\mbox{and}\quad u(\max
X_u(t))=\vpu, \\
~~~1 & \quad \mathrm{if}\,\, X_u(t) \neq \emptyset \quad\mbox{and}\quad u(\max X_u(t))=\vpd.
\\
                \end{array}\right.
\end{equation}

We notice that  $\hvp$ can only be equal to $\pm 1$  depending on the past history of the system,
with instantaneous ``switch-down'' and ``switch-up''  when $u$ takes the values $\vpu$ and $\vpd$, respectively.
The value of the relay operator remains at the last value ($\pm1$) until
$u$ takes the value of one opposite switch, that is, switch to value $1$ when
$u$ attains the value $\vpd$ from below, and to $-1$ when it attains $\vpu$ from
above.

To define the \textit{classical Preisach operator} $\Preic$, it is useful to
introduce the  so-called \textit{Preisach triangle}
$\PTr:=\{ \rho=(\vpu,\vpd) \in \mathbb{R}^2: -\vpc\leq \vpu \leq
\vpd\leq \vpc\}$ where $\vpc>0$ is given.
Let us denote by $Y$ the family of Borel measurable  functions ${\PTr}\rightarrow \{-1, 1\}$
and by $\xi$ a generic element of $Y$. Then, $\Preic$ is given by:
\begin{align}
& \Preic:  C([0,T])\times Y  \longrightarrow  C([0,T]), \nonumber\\
& (u,\xi) \longmapsto
[\Preic(u,\xi)](t)=\int_{\PTr}[h_\rho(u,\xi(\rho))](t)\pdisf(\rho)\dro,
\label{h_FS}
\end{align}
where $\pdisf\in \rL^1({\PTr})$ with $\pdisf>0$ is termed \textit{Preisach density function}.
Thus, the classical Preisach model can be understood as the ``sum'' of a family of static relays,
distributed with a certain density $p$. Considering the definition above, this operator is a hysteresis
operator in the mathematical sense established by Visintin~\cite{Vi94}.
\begin{remark}\label{eq:demag}
  If $u=0$ at $t=0$, then we can consider the following initial condition:
\begin{equation}
\hvp(u,\xi)(0):=\left\{\begin{array}{cl}
                  -1 & \mathrm{if}\,\, \vpu+\vpd>0 \\
                  1 &  \mathrm{if}\,\, \vpu+\vpd<0. \\
                \end{array}\right.
\end{equation}
In electromagnetism, this initial configuration
is usually called ``demagnetized'' or  ``virginal'' state because it leads to
a null magnetic induction when $p$ is symmetric~\cite{M91}.
\end{remark}

Currently, the classical Preisach model serves as a basis for generalizations that try to overcome
some of the lacks of the model. In particular, the fact that the form of the hysteresis diagram does
not reflect the frequency content of the input $u(t)$ (see \eqref{h_rho} and \eqref{h_FS}). In other
words, the model is rate-independent~\cite{M91};  this means that at any instant $t$,
$[\Preic(u,\xi)](t)$ only depends on the image set $u([0,t])$ and on the order in which these values of
 $u$ have been attained, but not on the rate of change of $u$. Formally, this can be expressed
as follows~\cite{Vi94}:
\begin{definition}\label{def:rate_indep}
$\zeta: \mbox{Dom}(\zeta)\subset C([0,T])\times[-1,1]$ is said
\textit{rate independent} if the path of the pair
$(u,\zeta(u,\xi))$ is invariant with respect to any increasing
diffeomorphism  $\varphi: [0, T] \rightarrow
[0, T]$, i.e.,
\begin{align*}
&   \forall (u,\xin) \in \mathrm{Dom}(\zeta),\nonumber \\
&\zeta(u \circ\varphi,\xin) =\zeta(u,\xin) \circ \varphi ~~\mbox{in}~ [0, T].
\end{align*}
%
\end{definition}

In order to take into account the rate-dependent effects, in the following section  we will introduce the so-called dynamic relay.

\subsection{The dynamic relay}

In the rate-dependent generalization of the Preisach
model (the so-called \textit{dynamic} Preisach model) introduced by Bertotti~\cite{B92,BP92}, the relays are
assumed to switch at a finite rate proportional to the difference between
$u(t)$ and the  switching values $\rho_1$ and $\rho_2$
(see Figure~\ref{h_fig:relay} (right)). In particular, this means that, contrary to the classical relay where
only two states, $-1$ and $1$ are possible, now all intermediate states in the interval $[-1,1]$ can be attained;
see Figure~\ref{fig:ex_1} and Remark~\ref{rmk_1}. The proportionality factor, denoted by $k$, is a material-dependent parameter.

Formally, for a fixed $\rho=(\rho_1,\rho_2)\in\R^2$, $\rho_1< \rho_2$, and motivated
 by \cite{B92}, we define the dynamic relay operator
 $\dhvp: \rL^2(0,T)\times [-1,1]\to \rH^1(0,T)$  such that, for any $u\in\rL^2(0,T)$
 and $\xi \in [-1,1]$, $\dhvp(u,\xi):[0,T]\to [-1,1]$  is the unique  function
 $y\in \rH^1(0,T)$ such that $-1\leq y(t)\leq 1$ and solves the nonlinear Cauchy problem:
\begin{align}
\label{eq:carq1}
\frac{dy}{dt}(t)&=F(t,y(t)):=\left\{ \begin{array}{l}
k(u(t)-\rho_2)^+ -k(u(t)-\rho_1)^- \,\, \mbox{ if } -1<y(t)<1,
\\
0\phantom{(u(t)-\rho_2)}  \quad \mbox{ if } y(t)=-1 \mbox{ and }u(t)\leq\rho_2,
\\
 k(u(t)-\rho_2)  \quad \mbox{ if } y(t)=-1 \mbox{ and }u(t)\geq \rho_2,
\\
k(u(t)-\rho_1) \quad \mbox{ if } y(t)=1 \mbox{ and }u(t)\leq\rho_1,
\\
0\phantom{(u(t)-\rho_2)}  \quad \mbox{ if } y(t)=1 \mbox{ and }u(t)\geq \rho_1,
\end{array}\right.
\\
\label{eq:initial}
y(0)&=\xi.
\end{align}
We have used the standard  notations:
$$
x^+= \max\{x,0\} \mbox{ and } x^-=\max\{-x,0\},
$$
so that $x=x^+-x^-$.
Notice that \eqref{eq:carq1} partially coincides with the definition given in \cite{B92} when $-1< y < 1$.
Nevertheless, in order to perform a suitable mathematical analysis, we need to consider  all the other
cases included in \eqref{eq:carq1}. We also notice that this formulation and that of Bertotti are consistent
because cases third and fourth cannot happen, as we will see in the sequel.

Let $g_\rho:\mathbb{R}\rightarrow \mathbb{R}$ be defined by
$$
g_\rho(u):=  -k(u-\rho_1)^- + k(u-\rho_2)^+.
$$
Then $g$ is Lipschitz-continuous:
$$
|g_\rho(u_1)-g_\rho(u_2)|\leq k |u_1-u_2|.
$$
In terms of $g_\rho(u(t))$, (\ref{eq:carq1}) can be  rewritten as follows
\begin{align}
\label{eq:carq1p}
\frac{dy}{dt}(t)&=\left\{
\begin{array}{ll}
g_\rho(u(t)) & \mbox{ if } -1<y(t)<1,
\\
0  & \mbox{ if } y(t)=-1 \mbox{ and }g_\rho(u(t))\leq0,
\\
g_\rho(u(t))  & \mbox{ if } y(t)=-1 \mbox{ and }g_\rho(u(t))\geq0,
\\
 g_\rho(u(t)) & \mbox{ if } y(t)=1 \mbox{ and }g_\rho(u(t))\leq0,
\\
0  & \mbox{ if } y(t)=1 \mbox{ and }g_\rho(u(t))\geq0,
\end{array}\right.
\\
\label{eq:initial2}
y(0)&=\xi,
\end{align}
To prove the existence and uniqueness of such a function $y$
we will consider another  \textit{apparently different} initial-value
problem, namely, find $y\in \rH^1(0,T)$ satisfying  $-1\leq y(t)\leq 1$
and $q\in \mathrm{L}^2(0,T)$ such that
\begin{equation}
\label{eq:dyn_relay}
\left\{\begin{array}{l}
\dfrac{d y}{dt}(t)=g_\rho(u(t))-q(t),
\\
y(0)=\xi,
\end{array}\right.
\end{equation}
and
\begin{equation}
\label{eq:q}
\left\{\begin{array}{ll}
q(t)=0 & \mbox{ if } |y(t)|< 1, \\
q(t)\in (-\infty,0] &    \mbox{ if } y(t)=-1, \\
q(t)\in [0,\infty) & \mbox{ if } y(t)=1.
\end{array}\right.
\end{equation}
Function $q(t)$ is a priori unknown and can be considered as a Lagrange
multiplier associated with the constraint $|y(t)|\leq 1$. In fact, problem  \eqref{eq:dyn_relay}-\eqref{eq:q} can be written in a more
compact but equivalent way as a multi-valued ordinary differential equation:
\begin{equation}
\label{eq:relay}
\left\{\begin{array}{l}
\dfrac{dy}{dt}(t)+\partial \chi_{[-1,1]} (y(t))\ni g_\rho(u(t)),
\\
y(0)=\xi,
\end{array}\right.
\end{equation}
where $\partial \chi_{[-1,1]}$ denotes the sub-differential of
$\chi_{[-1,1]}$ which is the indicator function of the interval
$[-1,1]$. Let us recall that (see, for instance, \cite{B73})
$$
\chi_{[-1,1]}(x)=\left\{
\begin{array}{ll}
\infty  & \mbox{ if } |x|> 1,
\\
0   &  \mbox{ if } |x|\leq 1,
\end{array}
\right.
\qquad
\mbox{and}
\qquad
\partial\chi_{[-1,1]}(x)=\left\{
\begin{array}{ll}
[0,\infty) & \mbox{ if } x=1,
\\
(-\infty,0] & \mbox{ if } x=-1,
\\
0  &\mbox{ if } |x|<1,
\\
\emptyset & \mbox{ if } |x|> 1.
\end{array}
\right.
$$
Thus, it is easy to see that function $q(t)$  belongs to
$\partial \chi_{[-1,1]}(y(t))$ for each $t\in [0,T]$ which means $|y(t)|\leq 1$ and
$$
q(t)\Big(z-y(t)\Big) \leq 0 \quad \forall z\in \mathbb{R} \mbox{ with } |z|\leq 1.
$$
Indeed, we notice that the latter is equivalent to (\ref{eq:q}). Let us
emphasize that function $q(t)$ is also an unknown of the problem
and, as we will see below, it is unique too. Actually, the value
of $q(t)$ accommodates so that the solution of the Cauchy problem satisfies $|y(t)|\leq 1$,
i.e., it is a Lagrange multiplier associated to this constraint.

The existence of a unique solution to the Cauchy problem
(\ref{eq:relay}) has been proved in a much more general setting
(see for instance \cite{B73}). However, for the sake of completeness
 we include  a direct proof for this simpler case in \ref{ap:proof}.
\begin{theorem}
\label{eq:existencerelay}
Let us assume that $u$ is a given function in $\mathrm{L}^2(0,T)$. Then for each
$\xi\in [-1,1]$ there exists a unique function $y\in \rH^1(0,T)$ satisfying
(\ref{eq:relay}). Consequently,  function $q(t)$ defined almost everywhere by
$$
q(t):=-\dfrac{dy}{dt}(t)-k(u(t)-\rho_1)^- + k(u(t)-\rho_2)^+
$$
belongs to $\mathrm{L}^2(0,T)$ and $q(t)\in \partial \chi_{[-1,1]}(y(t)) \mbox{ a.e. in } (0,T)$.
\end{theorem}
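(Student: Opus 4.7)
The plan is to prove existence by a time-discretization (implicit Euler) scheme and uniqueness by monotonicity. Since $\partial\chi_{[-1,1]}$ is the prototypical maximal monotone graph, both steps are standard but worth making explicit.

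\textbf{Existence.} First I would partition $[0,T]$ with step $\Dt=T/N$ and define a discrete sequence $\{y^n\}_{n=0}^N\subset[-1,1]$ by $y^0=\xi$ and, for $n\geq 0$,
\begin{equation*}
\frac{y^{n+1}-y^n}{\Dt}+\partial\chi_{[-1,1]}(y^{n+1})\ni g_\rho^n,\qquad g_\rho^n:=\frac{1}{\Dt}\int_{n\Dt}^{(n+1)\Dt}g_\rho(u(s))\,ds.
\end{equation*}
Because the resolvent $(I+\Dt\,\partial\chi_{[-1,1]})^{-1}$ coincides with the projection $P_{[-1,1]}$ onto $[-1,1]$, each step reduces to the explicit formula $y^{n+1}=P_{[-1,1]}(y^n+\Dt\,g_\rho^n)$, so the scheme is well defined. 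Using that $P_{[-1,1]}$ is $1$-Lipschitz together with $P_{[-1,1]}(y^n)=y^n$, I get $|y^{n+1}-y^n|\leq\Dt\,|g_\rho^n|$, hence the discrete derivative is bounded in $\ell^2$ by $\|g_\rho(u)\|_{\mathrm{L}^2(0,T)}\leq k\|u\|_{\mathrm{L}^2(0,T)}+C$.

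\textbf{Passage to the limit.} Let $\hat y_{\Dt}$ be the piecewise affine interpolant and $\bar y_{\Dt}$ the piecewise constant interpolant of $\{y^n\}$. The previous bound gives $\{\hat y_{\Dt}\}$ bounded in $\mathrm{H}^1(0,T)$, and the constraint yields $\|\bar y_{\Dt}\|_{\mathrm{L}^\infty}\leq 1$. Extracting a subsequence, $\hat y_{\Dt}\rightharpoonup y$ weakly in $\mathrm{H}^1(0,T)$ and strongly in $C([0,T])$ by compact embedding, and $\bar y_{\Dt}\to y$ in the same sense, with $|y|\leq 1$ and $y(0)=\xi$. Define $q_{\Dt}:=g_\rho(u)-\tfrac{d\hat y_{\Dt}}{dt}$ (viewed on each subinterval using $g_\rho^n$); this is bounded in $\mathrm{L}^2(0,T)$, so up to a subsequence $q_{\Dt}\rightharpoonup q$. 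The discrete inclusion is equivalent to the variational inequality
\begin{equation*}
\int_0^T q_{\Dt}(t)\bigl(z(t)-\bar y_{\Dt}(t)\bigr)\,dt\leq 0 \quad\text{for all }z\in\mathrm{L}^2(0,T),\ |z|\leq 1,
\end{equation*}
and passing to the limit (using strong convergence of $\bar y_{\Dt}$) yields the same inequality with $(q,y)$, which is exactly (\ref{eq:q}). Rewriting the limit identity gives (\ref{eq:dyn_relay}), equivalently (\ref{eq:relay}), so $y$ is a solution and $q\in\mathrm{L}^2(0,T)$ is the associated multiplier.

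\textbf{Uniqueness.} If $(y_1,q_1)$ and $(y_2,q_2)$ are two solutions, subtract the equations and test with $y_1-y_2$:
\begin{equation*}
\frac{1}{2}\frac{d}{dt}|y_1-y_2|^2=-\bigl(q_1(t)-q_2(t)\bigr)\bigl(y_1(t)-y_2(t)\bigr)\leq 0,
\end{equation*}
since $\partial\chi_{[-1,1]}$ is monotone. With $y_1(0)=y_2(0)=\xi$ this forces $y_1\equiv y_2$, and then $q_1=q_2$ from the equation. The formula for $q$ in the statement is just the pointwise rearrangement of (\ref{eq:dyn_relay}).

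The main obstacle is the limit passage in the variational inequality: I need the strong $C([0,T])$ convergence of $\bar y_{\Dt}$ (so that the constraint $|y|\leq 1$ survives and the product $q_{\Dt}\bar y_{\Dt}$ converges against a weak limit), which is why the $\mathrm{H}^1$ bound on the affine interpolant combined with the Arzel\`a--Ascoli/Aubin--Lions argument is crucial. Everything else is essentially the structure of maximal monotone operators specialized to the indicator of $[-1,1]$.
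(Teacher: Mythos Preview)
Your proof is correct, but it takes a genuinely different route from the paper's own argument. The paper proves existence via \emph{Yosida regularization}: it replaces $\partial\chi_{[-1,1]}$ by its Lipschitz approximation $(\partial\chi_{[-1,1]})_\mu(x)=(x-P_{[-1,1]}(x))/\mu$, solves the resulting regularized ODE by Picard--Lipschitz, derives $\rH^1$-bounds uniform in $\mu$ (by testing with $y_\mu$ and with $dy_\mu/dt$), and passes to the limit $\mu\to 0$, identifying the weak limit of $q_\mu$ as an element of $\partial\chi_{[-1,1]}(y)$ through the variational inequality. You instead discretize in time with implicit Euler, exploit that the resolvent of $\partial\chi_{[-1,1]}$ is just $P_{[-1,1]}$ to get an explicit one-step map, and pass to the limit $\Dt\to 0$. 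Both are textbook strategies for evolution inclusions governed by maximal monotone operators; the paper's approach keeps the continuous-time structure throughout and lets classical ODE theory do the work, while yours is more constructive and dovetails nicely with the backward-Euler scheme the paper later uses for the full PDE. The uniqueness argument is identical in both.

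One minor point of presentation: when you say $\bar y_{\Dt}\to y$ ``in the same sense'' (i.e.\ in $C([0,T])$), note that $\bar y_{\Dt}$ is discontinuous; what you actually need---and have---is $\|\bar y_{\Dt}-\hat y_{\Dt}\|_{\rL^\infty}\leq \max_n|y^{n+1}-y^n|\leq \sqrt{\Dt}\,\|g_\rho(u)\|_{\rL^2(0,T)}\to 0$, so $\bar y_{\Dt}\to y$ uniformly, which is more than enough to pass to the limit in the variational inequality. Similarly, your $q_{\Dt}$ should be built from the piecewise-constant averages $g_\rho^n$ rather than from $g_\rho(u)$ itself; since these averages converge to $g_\rho(u)$ strongly in $\rL^2$, the limit equation is unaffected.
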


We also have the following characterization of $q(t)$.
\begin{lemma}
For a.e. in $[0,T]$ we have,
\begin{equation}
\label{eq:carq2}
q(t)=P_{\partial\chi_{[-1,1]}(y(t))}\big(g_\rho(u(t))\big)=\left\{ \begin{array}{ll}
0 & \mbox{ if } -1<y(t)<1,
\\
g_\rho(u(t))  & \mbox{ if } y(t)=-1 \mbox{ and }g_\rho(u(t))\leq0,
\\
0  & \mbox{ if } y(t)=-1 \mbox{ and }g_\rho(u(t))\geq0,
\\
0  & \mbox{ if } y(t)=1 \mbox{ and }g_\rho(u(t))\leq0,
\\
g_\rho(u(t))  & \mbox{ if } y(t)=1 \mbox{ and }g_\rho(u(t))\geq0,
\end{array}\right.
\end{equation}
where $P_{\partial\chi_{[-1,1]}(y(t))}$ denotes the
projection on the set $\partial\chi_{[-1,1]}(y(t))$.
\end{lemma}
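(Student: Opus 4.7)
The plan is to start from what the previous theorem already delivers: almost everywhere in $(0,T)$ one has $q(t) = g_\rho(u(t)) - \frac{dy}{dt}(t)$ together with the inclusion $q(t)\in\partial\chi_{[-1,1]}(y(t))$. From here I would partition $[0,T]$, up to a null set, into the three measurable pieces
\begin{equation*}
E_0 := \{t : -1<y(t)<1\}, \quad E_+ := \{t : y(t)=1\}, \quad E_- := \{t : y(t)=-1\},
\end{equation*}
which is legitimate since $y\in \rH^1(0,T)\hookrightarrow C([0,T])$, and then handle each piece separately.

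On $E_0$ the subdifferential reduces to $\{0\}$, so $q=0$ there, which matches both the first case in \eqref{eq:carq2} and the value of $P_{\{0\}}(g_\rho(u))$. On $E_+$ and $E_-$ the key ingredient I would invoke is the classical Stampacchia truncation lemma: for $y\in \rH^1(0,T)$, the weak derivative satisfies $y'(t)=0$ almost everywhere on every level set $\{y=c\}$. Applied to $c=\pm 1$, this yields $\frac{dy}{dt}=0$ a.e. on $E_\pm$, and therefore $q(t)=g_\rho(u(t))$ a.e.\ on $E_\pm$.

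Combining this last equality with the inclusion $q(t)\in\partial\chi_{[-1,1]}(y(t))$ gives the sign information that pins down the remaining cases: on $E_+$ we need $q(t)\in[0,\infty)$, so $g_\rho(u(t))\geq 0$ a.e.\ on $E_+$, meaning the subset of $E_+$ where $g_\rho(u)\leq 0$ is in fact contained (up to a null set) in $\{g_\rho(u)=0\}$, where the claimed value $q=0$ and the computed value $q=g_\rho(u)$ agree. The symmetric argument on $E_-$ covers the remaining two cases. Finally, a direct computation of the projection onto the convex sets $\{0\}$, $(-\infty,0]$ and $[0,\infty)$ shows that the explicit piecewise formula in \eqref{eq:carq2} coincides with $P_{\partial\chi_{[-1,1]}(y(t))}(g_\rho(u(t)))$, closing the argument.

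The only nontrivial step is the vanishing of $y'$ on the level sets $\{y=\pm 1\}$; everything else is bookkeeping. The Stampacchia lemma handles this cleanly, so the proof reduces to invoking that tool and then matching cases with the projection formula.
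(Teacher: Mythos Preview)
Your argument is correct. The paper, however, dispatches the lemma in one line by invoking Remark~3.9 in Br\'ezis~\cite{B73}, which is the general fact that for the strong solution of $y'+\partial\varphi(y)\ni f$ the section selected from $\partial\varphi(y(t))$ is precisely the element of minimal norm in $f(t)-y'(t)$, i.e., the projection $P_{\partial\varphi(y(t))}(f(t))$. Your route is genuinely different: instead of appealing to the abstract maximal-monotone machinery, you work directly with the one-dimensional structure, using the Stampacchia level-set lemma to force $y'=0$ a.e.\ on $\{y=\pm1\}$ and then reading off $q=g_\rho(u)$ there. What this buys you is a self-contained proof that does not require familiarity with the specific result in \cite{B73}; the price is that it is tailored to this scalar constraint and would not generalize as cleanly to, say, a vector-valued problem with a more complicated convex constraint set. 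Both approaches are perfectly adequate here.
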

\begin{proof}
It follows immediately from  Remark 3.9 in \cite{B73}.\qed
\end{proof}

As a consequence of the previous characterization of $q(t)$ we obtain an equivalent
expression for \eqref{eq:dyn_relay}.
\begin{corollary}
We have, a.e. in $[0,T]$,
\begin{equation}
\label{eq:carq3}
\frac{dy}{dt}(t)=g_\rho(u(t))- q(t) 
=\left\{ \begin{array}{ll}
g_\rho(u(t)) & \mbox{ if } -1<y(t)<1,
\\
0  & \mbox{ if } y(t)=-1 \mbox{ and }g_\rho(u(t))\leq0,
\\
g_\rho(u(t))  & \mbox{ if } y(t)=-1 \mbox{ and }g_\rho(u(t))\geq0,
\\
 g_\rho(u(t)) & \mbox{ if } y(t)=1 \mbox{ and }g_\rho(u(t))\leq0,
\\
0  & \mbox{ if } y(t)=1 \mbox{ and }g_\rho(u(t))\geq0.
\end{array}\right.
\end{equation}
and, therefore, the solution of \eqref{eq:relay}  is also the solution of the
original problem \eqref{eq:carq1}-\eqref{eq:initial}. This follows from
\eqref{eq:carq3} and the fact that $g_\rho(u(t))\leq 0$ is equivalent to $u(t)\leq  \rho_2$,
and the same is true for $g_\rho(u(t))\geq  0$ and $u(t)\geq \rho_1$.
\end{corollary}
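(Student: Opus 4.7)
The proof is essentially a direct substitution, so the plan is short. First I would take the defining ODE of \eqref{eq:dyn_relay}, namely $\frac{dy}{dt}(t)=g_\rho(u(t))-q(t)$, and plug in the pointwise characterization \eqref{eq:carq2} of $q(t)$ supplied by the preceding lemma. Case by case this reproduces exactly the right-hand side displayed in \eqref{eq:carq3}: on $\{-1<y<1\}$ we have $q=0$; on $\{y=-1,\,g_\rho(u)\le 0\}$ we get $g_\rho(u)-g_\rho(u)=0$; on $\{y=-1,\,g_\rho(u)\ge 0\}$ we get $g_\rho(u)-0=g_\rho(u)$; and symmetrically for $y=1$.

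Next, to identify \eqref{eq:carq3} with the original multi-branch definition \eqref{eq:carq1}, I would carry out the sign analysis of $g_\rho(u)=k(u-\rho_2)^+-k(u-\rho_1)^-$. Since $\rho_1<\rho_2$, a short case check on the intervals $(-\infty,\rho_1]$, $[\rho_1,\rho_2]$, $[\rho_2,\infty)$ shows that $g_\rho(u)\le 0\Longleftrightarrow u\le\rho_2$ and $g_\rho(u)\ge 0\Longleftrightarrow u\ge\rho_1$, with equality only in the flat region $[\rho_1,\rho_2]$. On the open set $\{-1<y<1\}$ the right-hand side in \eqref{eq:carq1} is $k(u-\rho_2)^+-k(u-\rho_1)^-=g_\rho(u)$, matching \eqref{eq:carq3}. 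On $\{y=-1\}$ the condition $u\le\rho_2$ gives $g_\rho(u)\le 0$ so $\frac{dy}{dt}=0$, while $u\ge\rho_2$ gives $g_\rho(u)=k(u-\rho_2)$; on $\{y=1\}$ the condition $u\ge\rho_1$ gives $\frac{dy}{dt}=0$ and $u\le\rho_1$ gives $g_\rho(u)=k(u-\rho_1)$. Thus each branch of \eqref{eq:carq1} coincides with the corresponding branch of \eqref{eq:carq3}, establishing the equivalence.

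The only subtlety, and therefore the one point I would be most careful with, is the overlap at the thresholds $u=\rho_1$ (with $y=1$) and $u=\rho_2$ (with $y=-1$), where two branches of \eqref{eq:carq1} simultaneously apply. At those points both formulas yield $0$ because $g_\rho(u)=0$ there, so the definition is unambiguous. Once this is checked, the solution $y$ produced by Theorem~\ref{eq:existencerelay} for \eqref{eq:relay}, together with the $q$ from that theorem, automatically solves \eqref{eq:carq1}--\eqref{eq:initial}, giving the claimed equivalence.
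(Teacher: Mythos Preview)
Your proposal is correct and follows essentially the same route as the paper: the corollary is obtained by substituting the characterization \eqref{eq:carq2} of $q(t)$ into the ODE from \eqref{eq:dyn_relay}, and the identification with \eqref{eq:carq1} uses precisely the sign equivalences $g_\rho(u)\le 0\Leftrightarrow u\le\rho_2$ and $g_\rho(u)\ge 0\Leftrightarrow u\ge\rho_1$ that the paper invokes. Your explicit check of the threshold overlaps is a welcome clarification beyond what the paper spells out.
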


From the previous analysis it follows that, for a given input
$u$ and initial state $\xi$, the dynamic relay can be computed by
solving the multi-valued ordinary differential equation \eqref{eq:relay}.
This can be done, for instance, by using semi-smooth Newton~\cite{HK02} or Berm\'udez-Moreno method~\cite{BM81}, just to name a few.
However, motivated by \cite{B92}, we notice that for an input $u\in C([0,T])$
such that  $X_{u}(T)=\{t_u^1,\ldots, t_u^M\}, M \geq 1$, if we  define
$t_u^0:=0$ and $t_u^{M+1}:=T$, then the dynamic relay $y:=\dhvp(u,\xi)$
can be obtained by integrating \eqref{eq:carq3} (equivalently \eqref{eq:carq1})
with respect to $t$ and taking into account the saturation $-1\leq y\leq 1$.
Notice that, for each $t \in (t_u^m,t_u^{m+1}], m=0,\ldots,M+1$,  $y(t)$
is either nondecreasing  (if $u(t)\geq \rho_2$), or
nonincreasing (if $u(t)\leq \rho_1$) or constant (if $\rho_1 \leq u(t)\leq \rho_2$).
For ease of presentation, in order to obtain an expression for $y(t)$, we consider
the first interval, namely, $t \in (t_u^0,t_u^1]$. In this case it follows that,
\begin{itemize}
\item  If $ \rho_1\leq u(t) \leq \rho_2 $ then, from \eqref{eq:carq3}, the slope of $y$ vanishes
along the interval. Thus,  $y(t)=y(t_u^0)=\xi$ on $(t_u^0,t_u^1]$.
\item If $u(t)\geq\rho_2$  then $y(t)$ increases but cannot exceed $1$. Notice that the third
case in \eqref{eq:carq3} cannot be fulfilled on $(t_u^0,t_u^1]$, because, when $y(t_u^0)=-1$
the slope of $y$ is positive and $y(t)>-1, t\in (t_u^0,t_u^1]$ .
Thus, If $|y|< 1$ or  $y=1$,  from the first and fifth cases in \eqref{eq:carq3}, respectively,
we arrive at
\[
y(t)=\min\left\{1, y(t_u^0)+\displaystyle\int_{t_u^0}^t k(u(s)-\rho_2)ds\right\}.
\]
\item If $u(t)\leq\rho_1$,  then $y(t)$ decreases but cannot be less than $-1$.
Similar to the previous case, we notice that the fourth
case in \eqref{eq:carq3} cannot be fulfilled on $(t_u^0,t_u^1]$
because when $y(t_u^0)=1$ the slope of $y$ is negative and $y(t)<1, t\in (t_u^0,t_u^1].$
Thus,  if $|y|< 1$ or  $y=-1$, from the first and second cases in \eqref{eq:carq3}, respectively,
we arrive at
\[
y(t)=\max\left\{-1, y(t_u^0)+\displaystyle\int_{t_u^0}^t k(u(s)-\rho_1)ds\right\}.
\]
\end{itemize}
Thus, by proceeding similarly with the remaining intervals, it follows that
for $t\in~(t_u^m,t_u^{m+1}], m=0,\ldots,M$:
\begin{align}
y(t)
&=
\left\{\begin{array}{cl}
\min\left\{1, y(t_u^m)+\displaystyle\int_{t_u^m}^t k(u(s)-\rho_2)ds\right\}
& \mbox{ if } u(s)\geq\rho_2 \mbox{ for } s \in (t_u^m,t],\\
\max\left\{-1, y(t_u^m)+\displaystyle\int_{t_u^m}^t k(u(s)-\rho_1) ds\right\}
&\mbox{ if } u(s)\leq\rho_1 \mbox{ for } s \in (t_u^m,t], \\
y(t_u^m) & \mbox{otherwise.}
\label{eq:dyn_relay_int2}
\end{array}\right.
\end{align}
%
%
To give an idea of the plane curve $t\in [0,T]\rightarrow (u(t),y(t))$,
we consider two examples with a sinusoidal input
$u(t)=200\sin(2\pi f t )$  and initial condition $\xi=-1$. This input is depicted in Figure~\ref{fig:ex_1} (top and bottom left) for  frequency $f=20$Hz,
where the dotted lines represent different $(\rho_1,\rho_2)$ values.
Figure~\ref{fig:ex_1} (top right) shows the  curve $t\in [0,T]\rightarrow (u(t),y(t))$ when the   relay $\dhvp$
is characterized by switching values  $(\rho_1,\rho_2)=(50,100)$ and slopes $k\in \{1,50,10^8\}$. For the same slopes, in
Figure~\ref{fig:ex_1} (bottom right) we show this curve for switching values $(\rho_1,\rho_2)=(-50,50)$.
Notice that $(u(t),y(t))$ may vary in an asymmetric way when the input is not symmetric with
respect to $(\rho_1,\rho_2)$ (see dashed line in Figure~\ref{fig:ex_1} (top right)).
We also notice that the dash-dotted line in Figure~\ref{fig:ex_1} (top and bottom right) becomes
similar to the discontinuous  relay of the classical Preisach
model (see Figure~\ref{h_fig:relay} (left)), that is, the   classical relay
can be seen as the limit as $k\rightarrow \infty$ of the dynamic relay.
\begin{figure}[h]
\begin{center}
\includegraphics[width=0.42\textwidth]{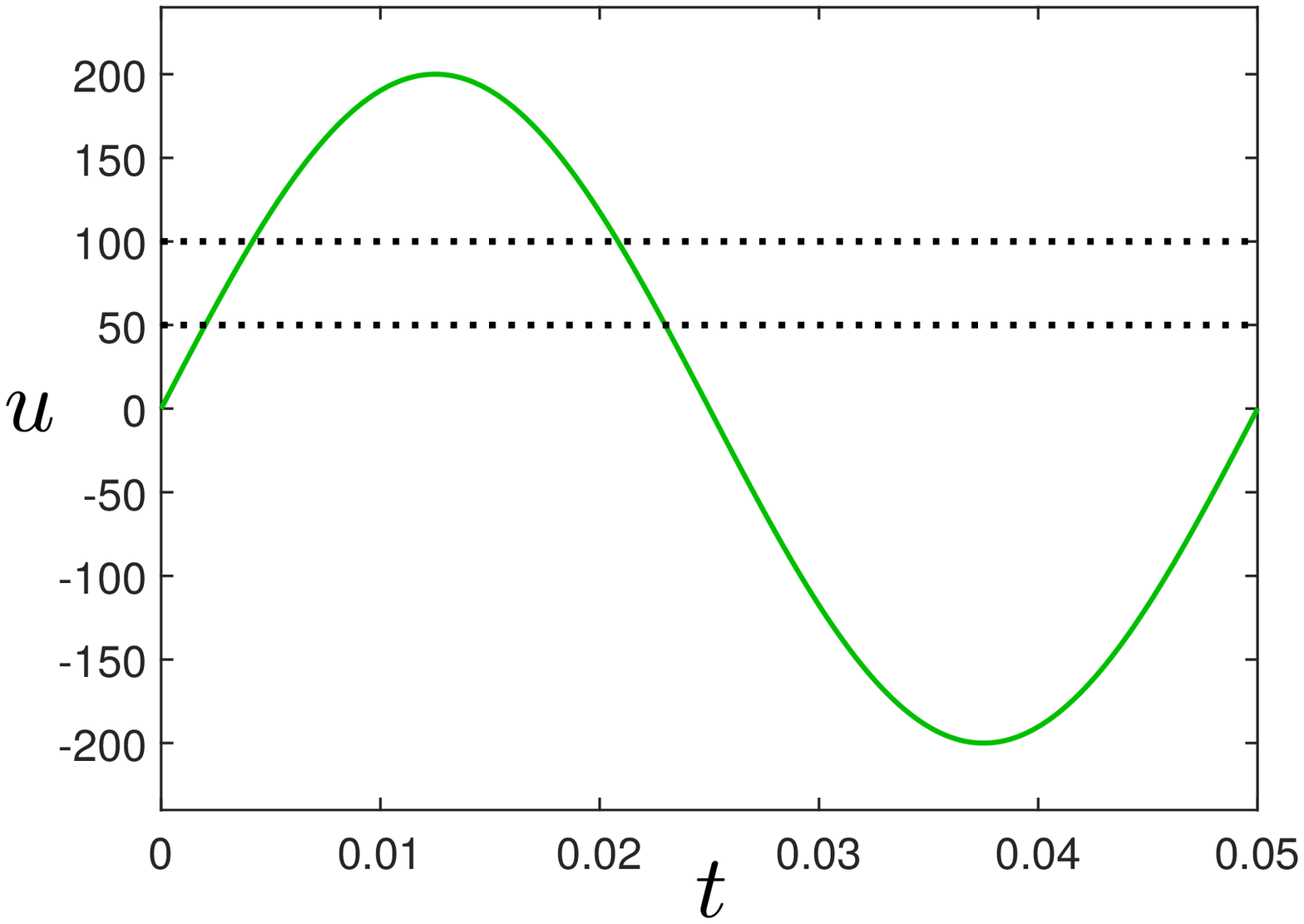}
\includegraphics[width=0.42\textwidth]{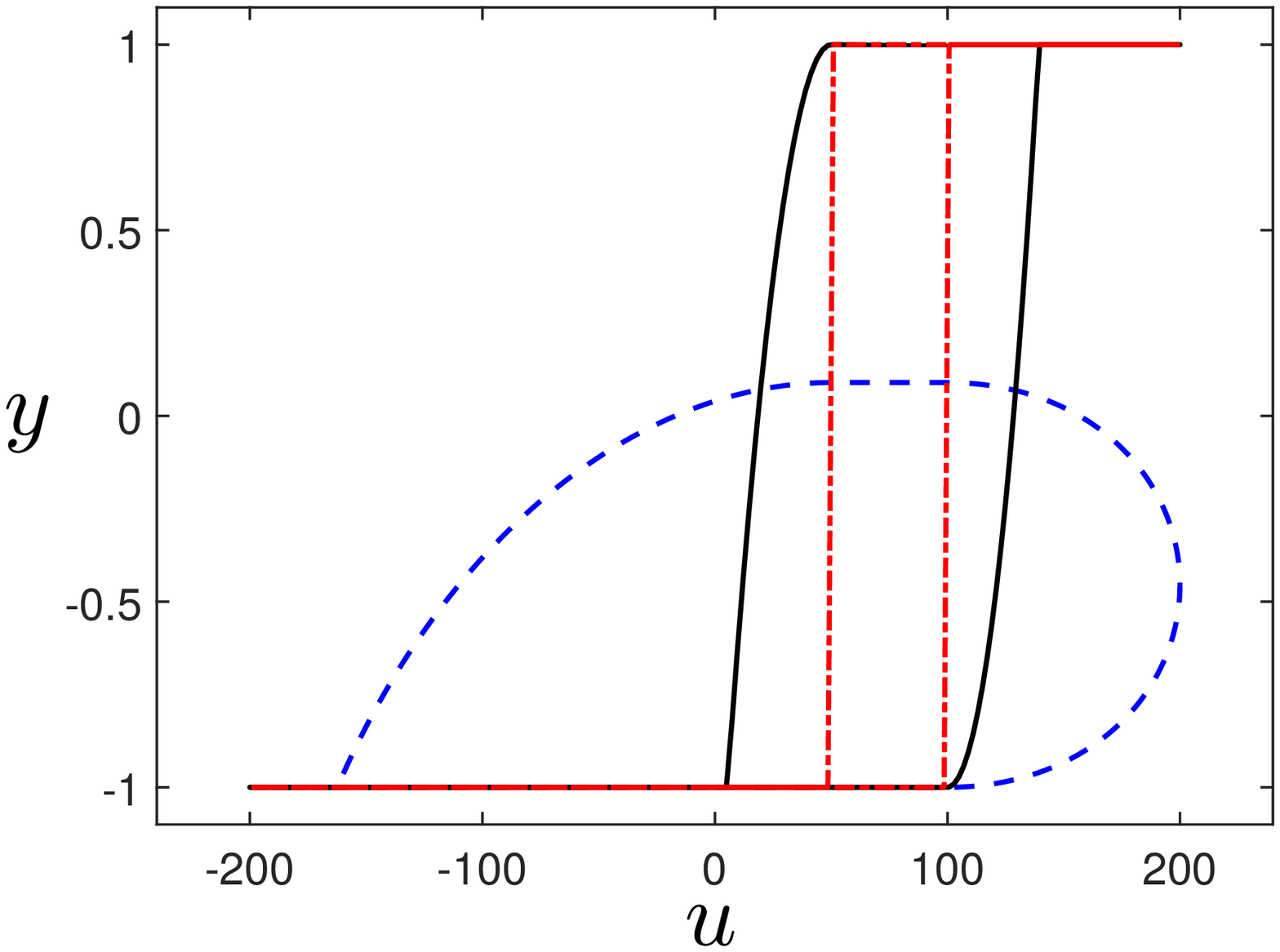}\\
\includegraphics[width=0.42\textwidth]{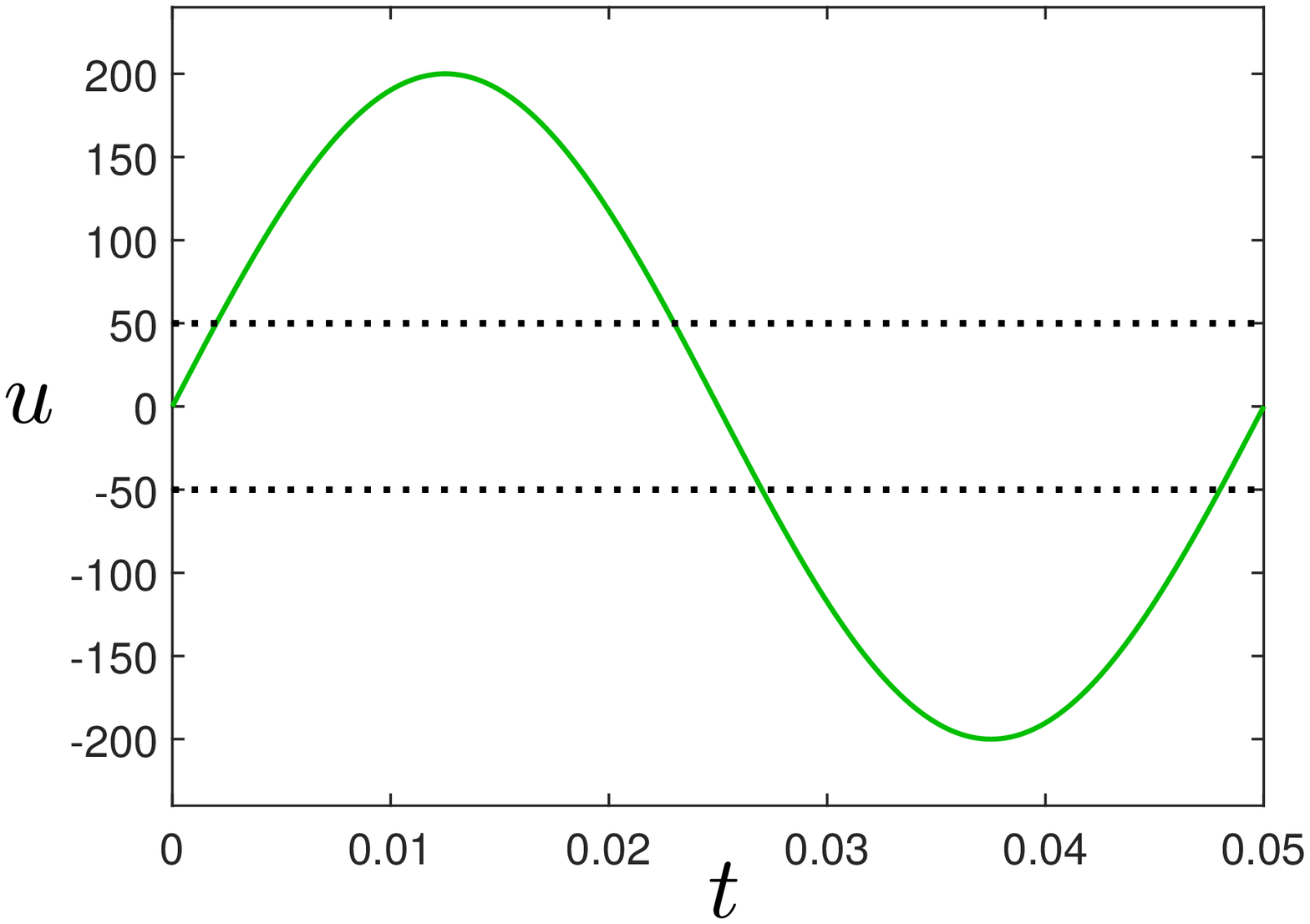}
\includegraphics[width=0.42\textwidth]{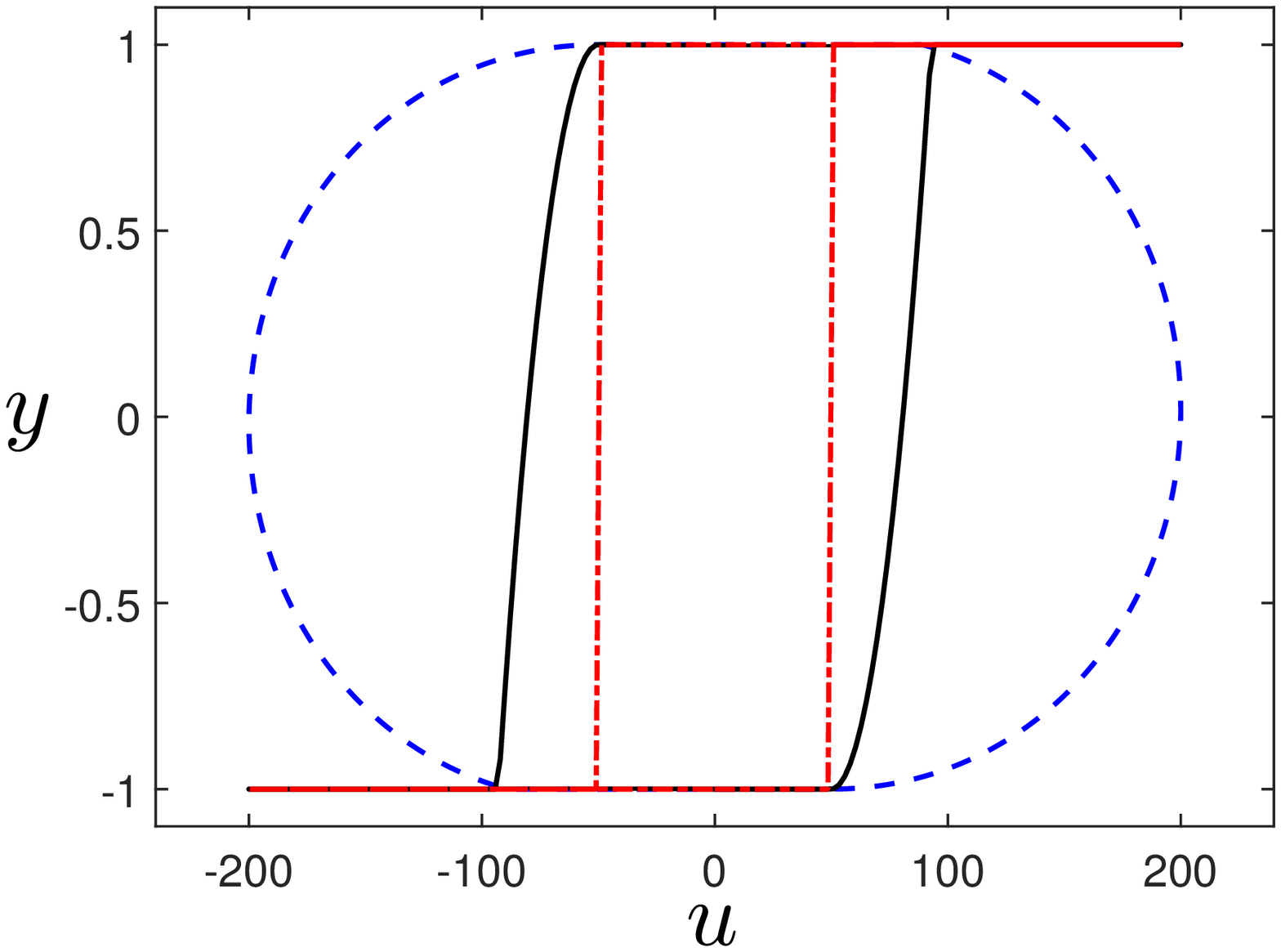}\\
\end{center}
\caption{Input function $u(t)=200 \sin(2\pi ft)$ (left) for frequency $f=20$Hz.
Three  dynamic relays are presented for
switching values  $(\rho_1,\rho_2)=(50,100)$ (top), $(\rho_1,\rho_2)=(-50,50)$ (bottom)  and initial state $\xi=-1$.
The switching values are represented by dotted lines on the left panels.
The right panels show the different
relay  with slopes  $k=1$ (dashed line), $k=50$ (solid line)  and $k=10^8$ (dash-dotted line). }
\label{fig:ex_1}
\end{figure}

For the second example we analyze the evolution of the relays with a sinusoidal input
$u(t)=200\sin(2\pi f t )$ for  $k=50$ and different frequencies $f$.
Figure~\ref{fig:ex_2} shows the dynamic relays
characterized by $(\rho_1,\rho_2)=(50,100)$ (top right) and $(\rho_1,\rho_2)=(-50,50)$ (bottom right)
for frequencies 50, 500 and 5000 Hz.
From these examples we can see the variation of the dynamic relay with respect to $k$
and the input rate.   In particular, Figure~\ref{fig:ex_2} shows that the  dynamic relay is rate-dependent.
\begin{figure}[h]
\begin{center}
\includegraphics[width=0.42\textwidth]{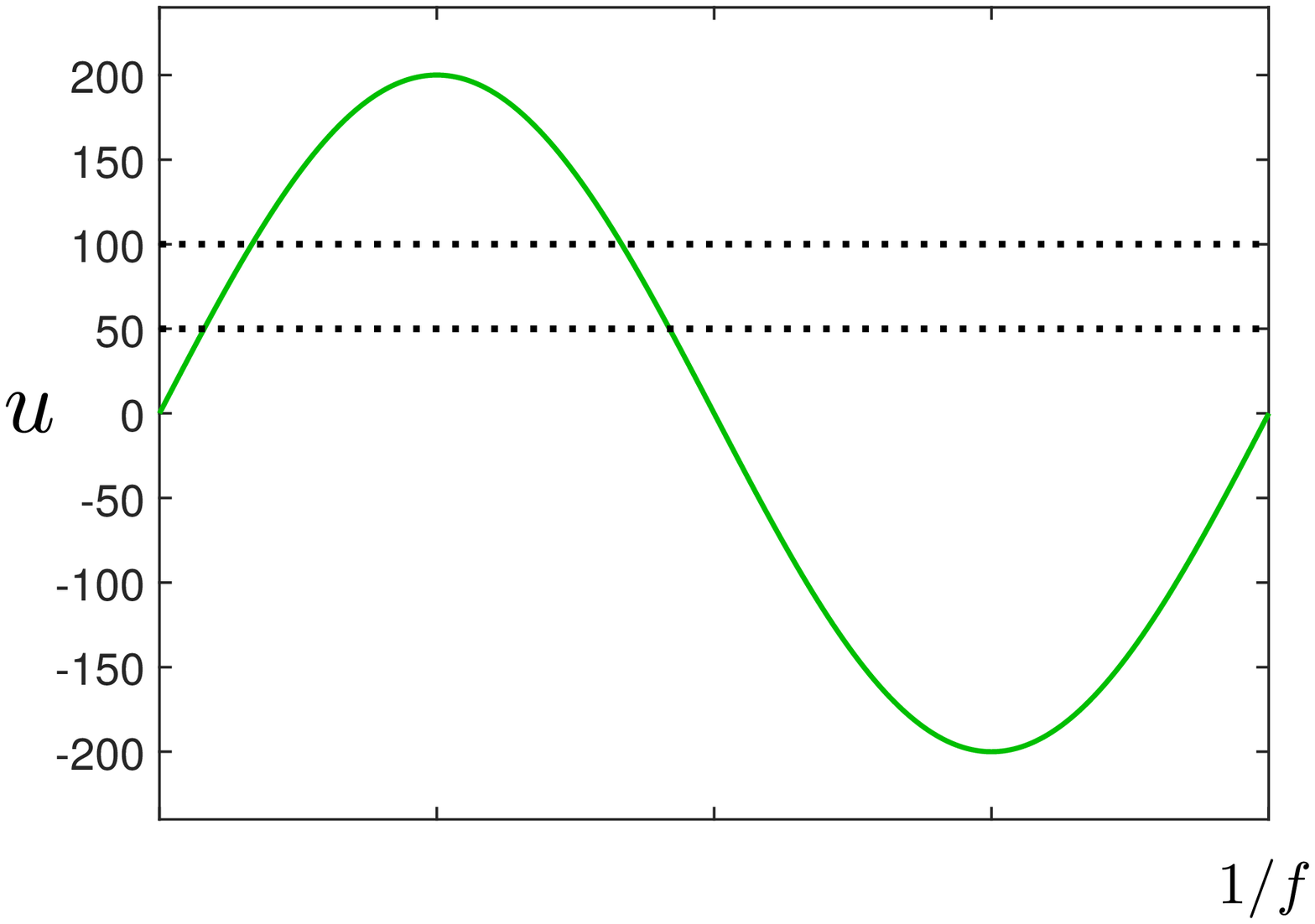}
\includegraphics[width=0.42\textwidth]{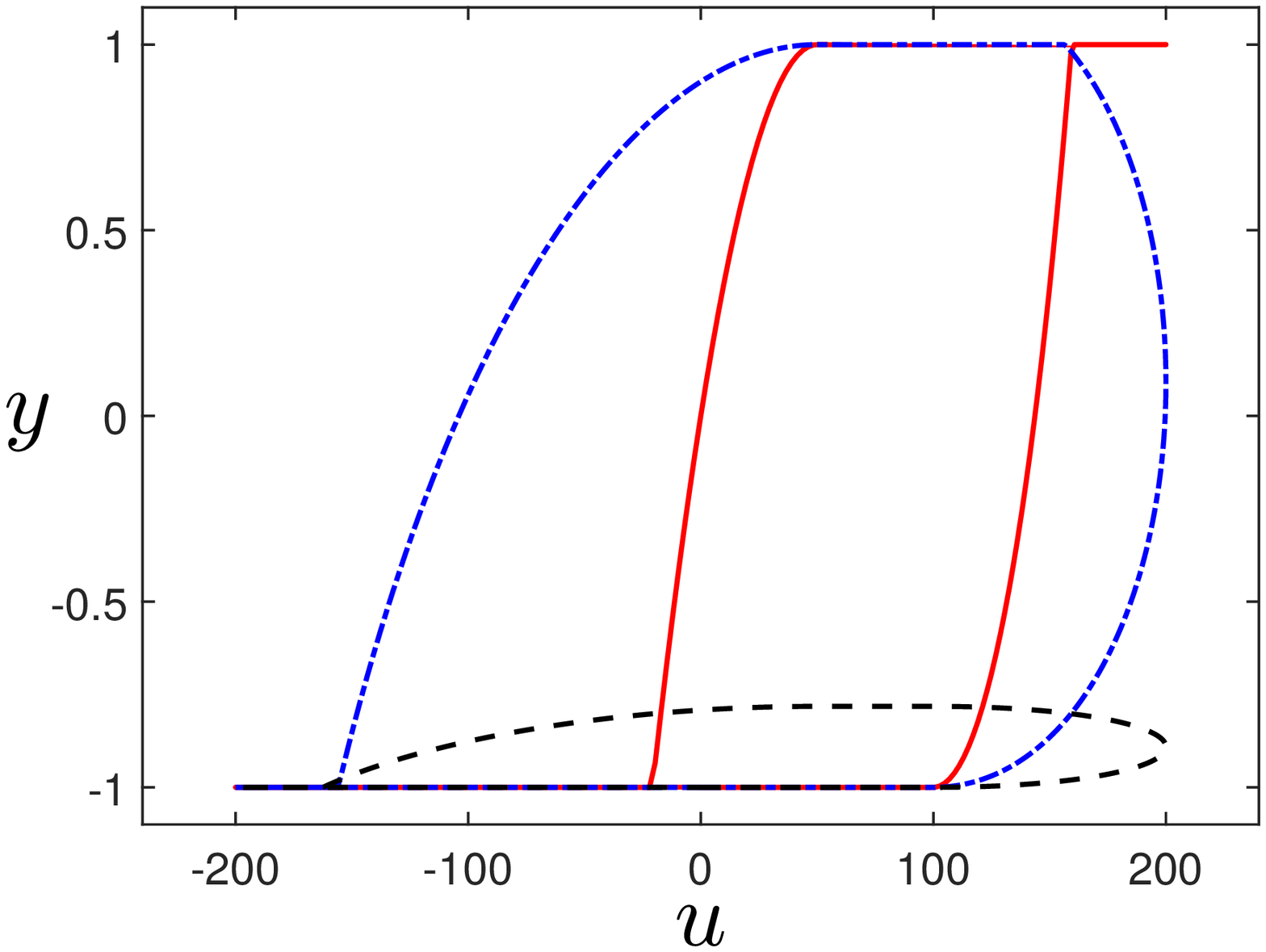}\\
\includegraphics[width=0.42\textwidth]{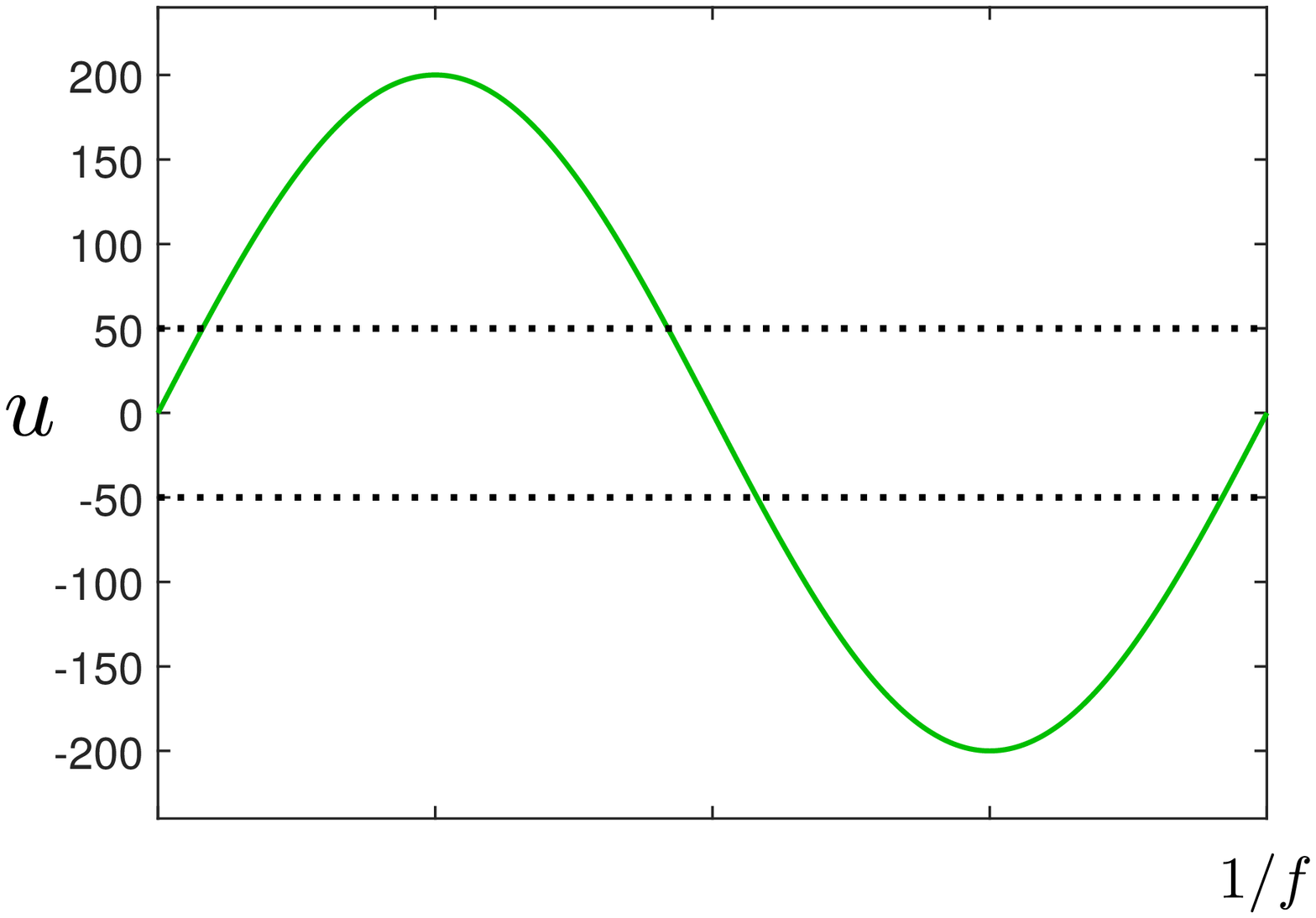}
\includegraphics[width=0.42\textwidth]{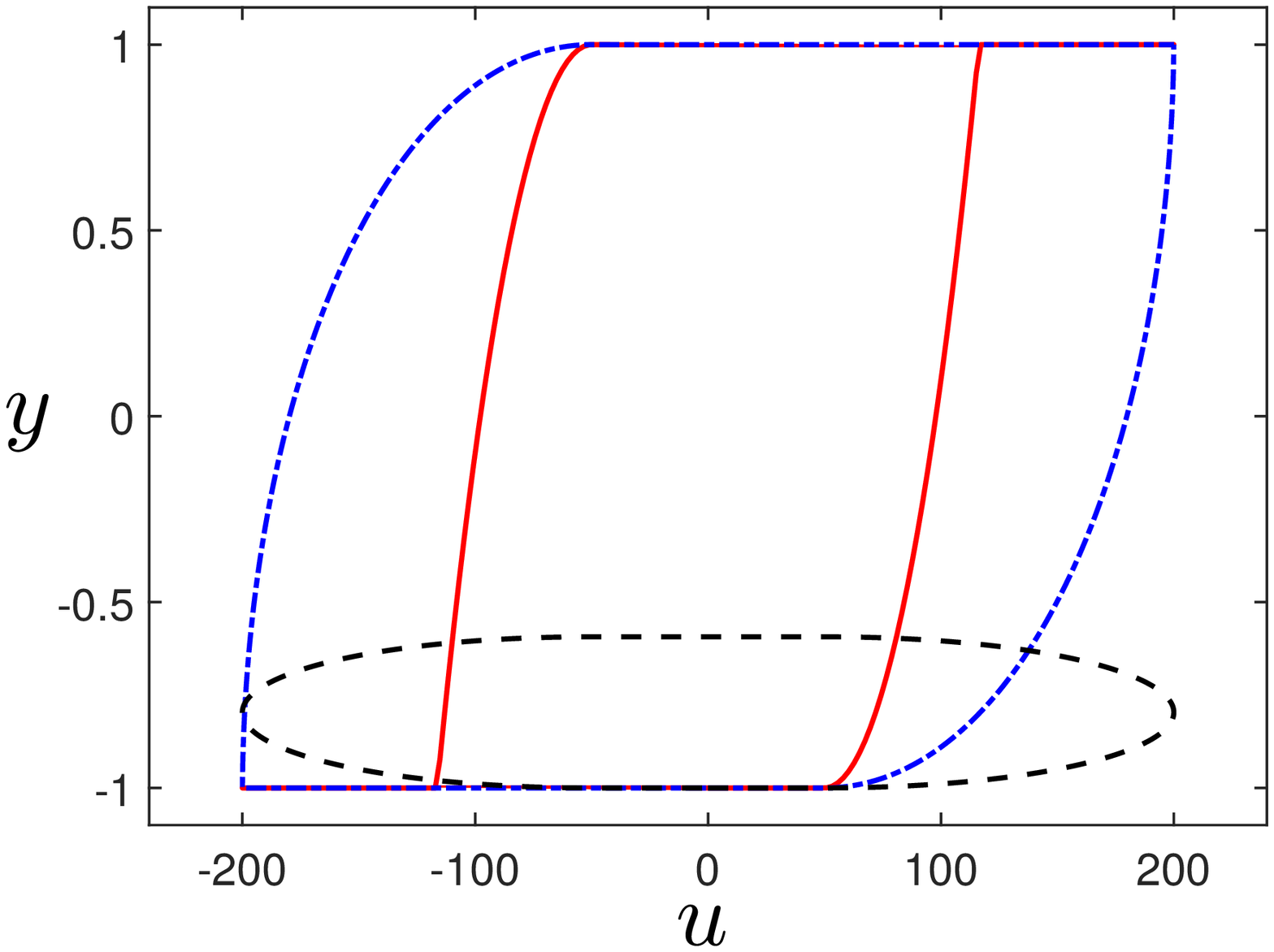}\\
\end{center}
\caption{Input function $u(t)=200\sin(2\pi f t )$ (left) for a fixed
frequency denoted by $f$. Three  dynamic relays are presented for
switching values  $(\rho_1,\rho_2)=(50,100)$ (top), $(\rho_1,\rho_2)=(-50,50)$ (bottom) and initial state $\xi=-1$.
The switching values are represented by dotted lines on the left panels. The  figures at the right show
 the $(u,y)$ curve corresponding to the dynamic relay for $k=50$ and three frequencies: $f=50$Hz (dashed line),
 $f=500$Hz (dash-dotted line) and $f=5000$Hz (solid line).}
\label{fig:ex_2}
\end{figure}
\begin{remark}\label{rmk_1}
Unlike the classical Preisach model, in the dynamic model the initial
state $\xi$ belongs to  $[-1,1]$ instead of $\{-1,1\}$.
Figure~\ref{fig:rmk_1} shows the relays for $u_1(t)=150\sin(2\pi f t )$
and $u_2(t)=150\sin(2\pi f t )+75$, for $f=20$ Hz and two different initial states:
$\xi_1=-0.5$ (left) and $\xi_2=0.5$ (right). Two slopes are considered
for each initial  state: $k=5$ and $k=50$.
  \begin{figure}[h]
\begin{center}
\includegraphics[width=0.42\textwidth]{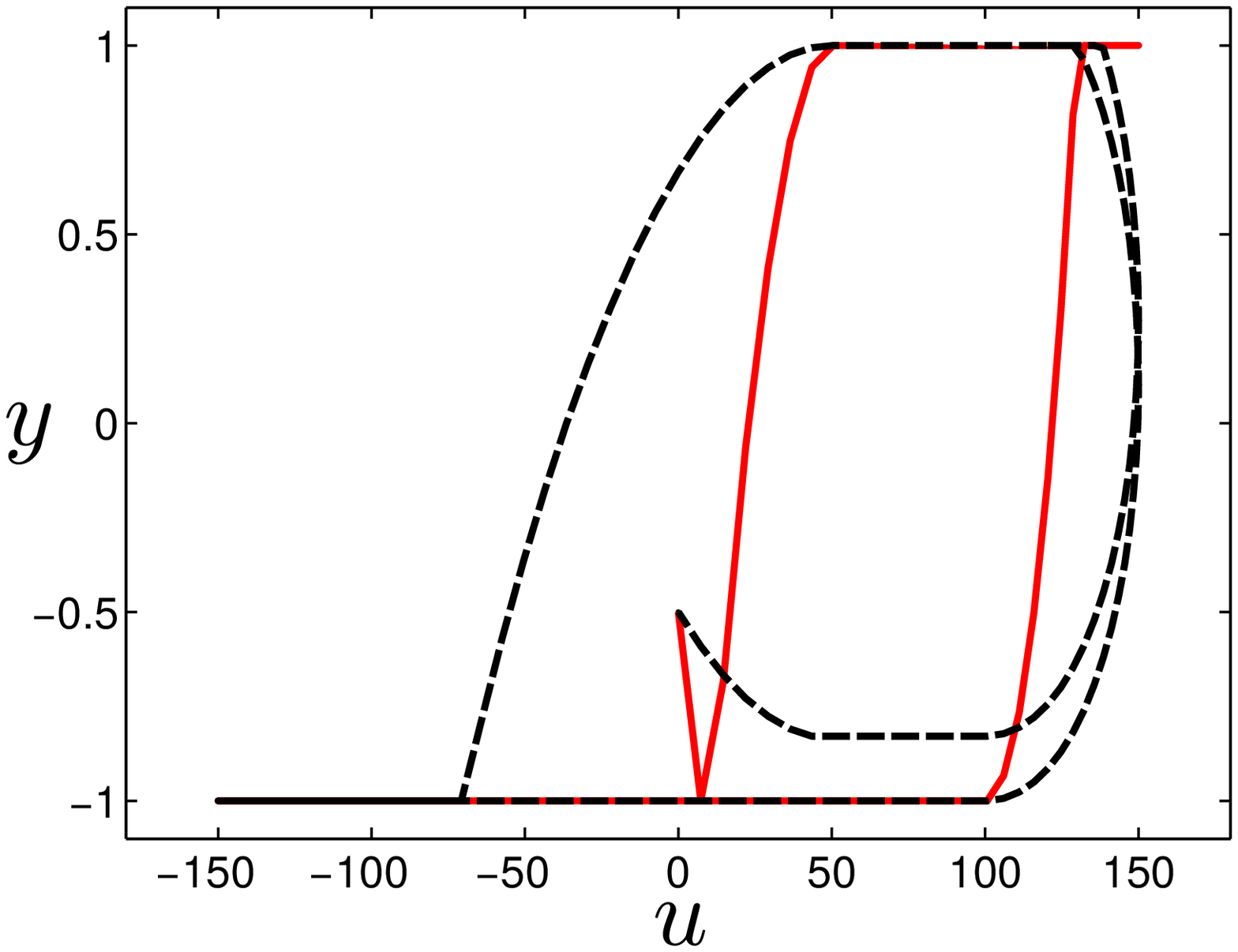}
\includegraphics[width=0.42\textwidth]{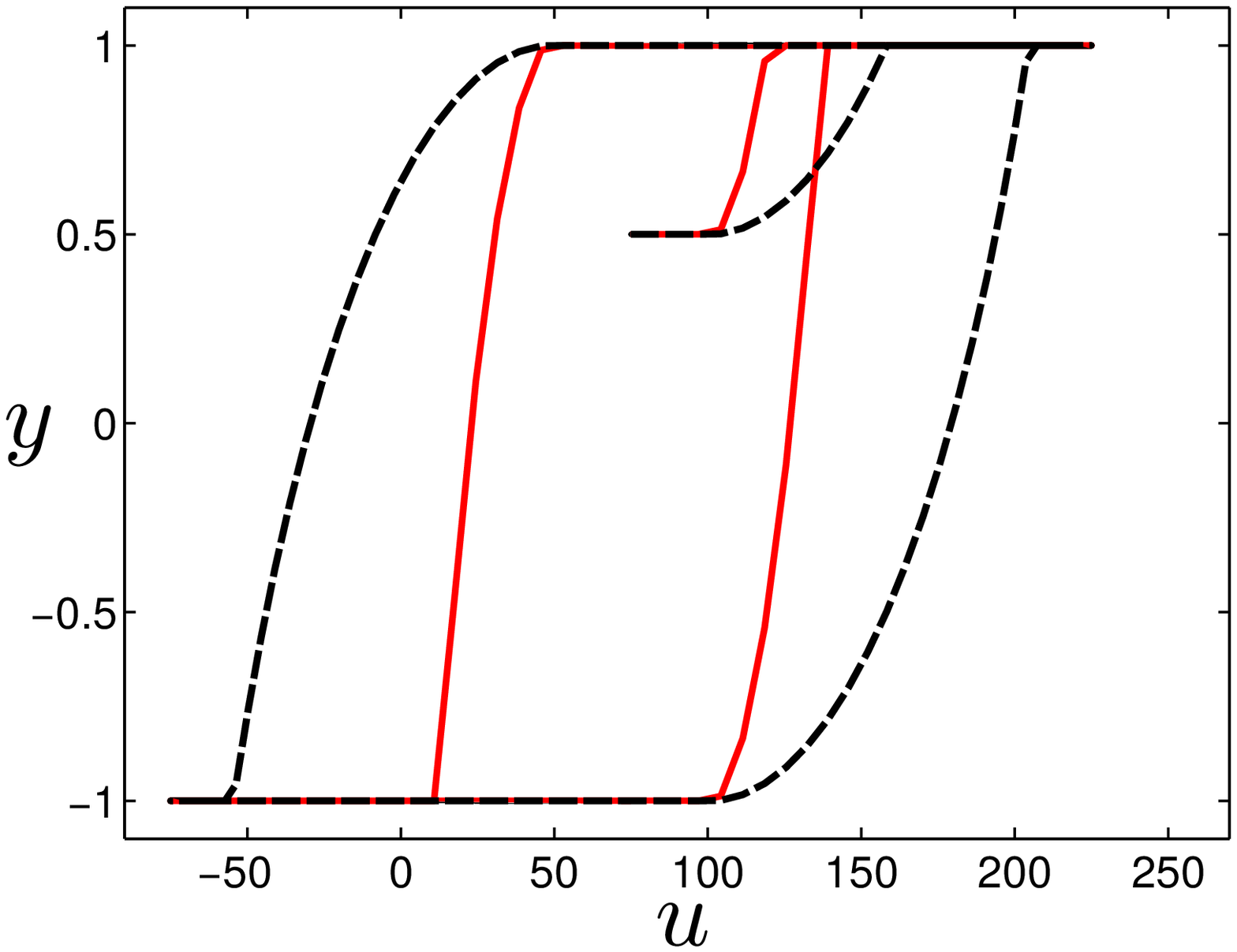}
\end{center}
\caption{The left panel shows
a dynamic relay corresponding to the input $u_1(t)=150\sin(2\pi f t )$  and initial state $\xi=-0.5$.
The right panel shows a dynamic relay corresponding to input $u_2(t)=150\sin(2\pi f t )+75$
 and initial state $\xi=0.5$.
 For both relays $f=20$ Hz; two slopes,  $k=50$ (solid line) and $k=5$ (dashed line) are considered. }
\label{fig:rmk_1}
\end{figure}
\end{remark}
From the previous examples we notice that, unlike the classical
static relay,  the dynamic relay $\dhvp$ does not satisfy either
the so-called \textit{rate independence} (see Definition~\ref{def:rate_indep}) or  the \textit{piecewise monotonicity}
properties defined as follows:
\begin{definition}
$\zeta$ is said \textit{piecewise monotone} if when $ u$ is
either nondecreasing or nonincreasing in $[t_1,
t_2] \subset [0,T]$ then so is $\zeta(u,\xin)(t)$.
\end{definition}
Notice that the dashed curve on Figure~\ref{fig:ex_2} (right)  shows that the latter is
not satisfied by the dynamic relay.

In the sequel we will show some properties of the dynamic
relay solution to \eqref{eq:carq1}-\eqref{eq:initial}.
The following  lemma shows that, like the classical relay,
the dynamic relays ``preserves the order'' of the inputs.
\begin{lemma}\label{lemma:eta_order} Let $u_1,u_2 \in \mathrm{L}^2(0,T)$,
$\xin_1, \xin_2 \in [-1,1]$ and $k>0$. The dynamic relay $\dhvp$ satisfies the following
order preservation property:
\begin{equation}\label{eq:order_pre}
\mbox{If } u_1 \leq u_2 \mbox{  a.e. in  }  [0,T] \mbox{  and  } \xin_1\leq \xin_2,
\mbox{  then  } [\dhvp(u_1,\xin_1)](t) \leq [\dhvp(u_2,\xin_2)](t)\,\,
\forall t \in [0,T].
\end{equation}
\end{lemma}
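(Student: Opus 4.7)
The plan is to recast the problem in terms of the differential inclusion \eqref{eq:relay} and exploit two monotonicity properties: one of the forcing term $g_\rho$ as a function of $u$, and one of the subdifferential operator $\partial\chi_{[-1,1]}$. Write $y_i := \dhvp(u_i,\xi_i)$ for $i=1,2$, and let $q_i \in \mathrm{L}^2(0,T)$ be the Lagrange multipliers furnished by Theorem~\ref{eq:existencerelay}, so that
$$
\frac{dy_i}{dt}(t) = g_\rho(u_i(t)) - q_i(t), \qquad q_i(t) \in \partial\chi_{[-1,1]}(y_i(t)), \qquad y_i(0) = \xi_i.
$$
From the explicit formula $g_\rho(u) = -k(u-\rho_1)^- + k(u-\rho_2)^+$ one sees that $g_\rho$ is nondecreasing (its piecewise-linear graph has slope $0$ or $k>0$), so the hypothesis $u_1 \leq u_2$ a.e.\ immediately gives $g_\rho(u_1) \leq g_\rho(u_2)$ a.e.

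Next I would record the key pointwise inequality $\bigl(q_1(t)-q_2(t)\bigr)\bigl(y_1(t)-y_2(t)\bigr)\geq 0$ a.e. This follows from the variational characterization of the subdifferential: since $y_j(t)\in[-1,1]$, taking $z=y_2(t)$ in the inequality $q_1(t)(z-y_1(t))\leq 0$ and $z=y_1(t)$ in $q_2(t)(z-y_2(t))\leq 0$, and adding, yields the monotonicity bound.

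Now I would introduce $w(t) := (y_1(t)-y_2(t))^+$. Since $y_1,y_2\in\mathrm{H}^1(0,T)$ and $\xi_1\leq \xi_2$, the Stampacchia chain rule guarantees $w\in\mathrm{H}^1(0,T)$ with
$$
\frac{dw}{dt} = \mathbf{1}_{\{y_1>y_2\}}\,\frac{d(y_1-y_2)}{dt} \quad\text{a.e.,}\qquad w(0)=0.
$$
Subtracting the two equations, multiplying by $w$, and integrating on $[0,t]$ leads to
$$
\tfrac{1}{2}\,w(t)^2 \;=\; \int_0^t w(s)\bigl[g_\rho(u_1(s))-g_\rho(u_2(s))\bigr]\,ds \;-\; \int_0^t w(s)\bigl[q_1(s)-q_2(s)\bigr]\,ds.
$$
Both integrands are nonpositive: the first by monotonicity of $g_\rho$, and the second because $w(s)>0$ forces $y_1(s)>y_2(s)$ so the scalar inequality from the previous paragraph applies. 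Consequently $w(t)^2\leq 0$, so $w\equiv 0$, i.e.\ $y_1\leq y_2$ on $[0,T]$, which is \eqref{eq:order_pre}.

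The main delicate point is the treatment of the multiplier term $\int_0^t w(q_1-q_2)\,ds$, since $q_i$ is only in $\mathrm{L}^2$ and is concentrated on the saturation sets $\{y_i=\pm 1\}$. The variational reformulation above disposes of this cleanly, because the pointwise inequality $(q_1-q_2)(y_1-y_2)\geq 0$ holds a.e.\ before any integration, so multiplying by the nonnegative scalar $w(s)$ is unambiguous and the Lebesgue integral is well defined. With that step secured, the remainder is the standard Gronwall-type absorption, which here is even trivial since the right-hand side is already nonpositive.
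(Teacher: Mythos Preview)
Your proof is correct and follows essentially the same route as the paper: both recast the relay via the differential inclusion \eqref{eq:relay}, subtract the two equations, multiply by $(y_1-y_2)^+$, and use the monotonicity of $g_\rho$ together with that of $\partial\chi_{[-1,1]}$ to conclude that $(y_1-y_2)^+\equiv 0$. Your write-up is slightly more explicit about the Stampacchia chain rule and the variational characterization of the subdifferential, but the argument is the same.
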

\begin{proof}
  Let $y_i(t)$ be the solution of problem
 \eqref{eq:relay} for $u=u_i(t)$, $i=1,2$ and respective initial condition
 $y_i(0)=\xi_i$, $i=1,2$. Then we have
\begin{equation}
\label{eq:order1}
\frac{dy_1}{dt}-\frac{dy_2}{dt} +q_1(t)-q_2(t)=g_{\rho}(u_1(t))-g_{\rho}(u_2(t)),
\end{equation}
$y_1(0)-y_2(0)=\xi_1-\xi_2$ and $q_i(t)\in \partial\chi_{[-1,1]}(y_i(t))$,
$i=1,2$. Let us multiply (\ref{eq:order1}) by $\big(y_1(t)-y_2(t)\big)^+$.  We get
\begin{multline*}
 \frac{1}{2} \frac{d}{dt}|\big(y_1(t)-y_2(t)\big)^+|^2
 +(q_1(t)-q_2(t))\big(y_1(t)-y_2(t)\big)^+\\
=\big(g_{\rho}(u_1(t))-g_{\rho}(u_2(t))\big)(y_1(t)-y_2(t)\big)^+\leq 0
\end{multline*}
By integrating from $0$ to $T$ and using the monotonicity
of $\partial\chi_{[-1,1]}$ we deduce
$$
 \frac{1}{2}|\big(y_1(t)-y_2(t)\big)^+|^2\leq
 \frac{1}{2}|\big(y_1(0)-y_2(0)\big)^+|^2= \frac{1}{2}|\big(\xi_1-\xi_2\big)^+|^2=0
$$
Hence $\big(y_1(t)-y_2(t)\big)^+=0$ from which the result follows.\qed
\end{proof}
As a consequence of the previous result,  the following property,
to be used in the sequel, holds true.
\begin{corollary}
 Let $\xin \in [-1,1]$ and $u,v\in \rL^2(0,T)$ such that $u(t)=v(t)$  a.e. in $[0,t_1]$.
If $u\geq v$ a.e. in $[t_1,t_2]$, $t_1\leq t_2$, then
\begin{equation}\label{eq:pwl_eta}
  \left([\dhvp(u,\xin)](t)-[\dhvp(v,\xin)](t)\right)\left(u(t)-v(t)\right)
  \geq 0 \mbox{ a.e. in }[t_1,t_2].
\end{equation}
\end{corollary}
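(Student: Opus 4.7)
The plan is to reduce the corollary to Lemma~\ref{lemma:eta_order} by a restart argument at time $t_1$. First I would use the uniqueness part of Theorem~\ref{eq:existencerelay}: since $u(t)=v(t)$ a.e. on $[0,t_1]$, the functions $[\dhvp(u,\xin)](t)$ and $[\dhvp(v,\xin)](t)$ both satisfy the same multi-valued Cauchy problem~\eqref{eq:relay} on $[0,t_1]$ with the same initial datum $\xin$. By uniqueness they coincide on $[0,t_1]$; in particular they share a common value at $t_1$, which I would denote by $\hat{\xin}:=[\dhvp(u,\xin)](t_1)=[\dhvp(v,\xin)](t_1)\in[-1,1]$.

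Next I would invoke a semigroup/restart property, which is really just another application of the uniqueness in Theorem~\ref{eq:existencerelay} on the translated time interval $[t_1,t_2]$: the restrictions of $[\dhvp(u,\xin)](\cdot)$ and $[\dhvp(v,\xin)](\cdot)$ to $[t_1,t_2]$ are themselves the unique $\mathrm{H}^1$-solutions of~\eqref{eq:relay} on $[t_1,t_2]$ with initial state $\hat{\xin}$ at time $t_1$ and inputs $u|_{[t_1,t_2]}$, $v|_{[t_1,t_2]}$ respectively. Thus they can be identified with dynamic relays on $[t_1,t_2]$ driven by these inputs and starting from the common initial state $\hat{\xin}$.

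Now I would apply Lemma~\ref{lemma:eta_order} (whose proof transfers verbatim to the interval $[t_1,t_2]$, since the proof multiplies the difference of the two ODEs by $(y_1-y_2)^+$ and integrates on any time interval) with $\xin_1=\xin_2=\hat{\xin}$ and the hypothesis $u\geq v$ a.e. on $[t_1,t_2]$. This yields
\[
[\dhvp(u,\xin)](t)-[\dhvp(v,\xin)](t)\geq 0\qquad\text{for all } t\in[t_1,t_2].
\]
Combining this with $u(t)-v(t)\geq 0$ a.e.\ on $[t_1,t_2]$ gives the nonnegativity of the product~\eqref{eq:pwl_eta}.

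The only delicate point is the restart step: one must check that solutions can be concatenated at $t_1$, i.e.\ that the restriction to $[t_1,t_2]$ of the global solution and the solution of the freshly posed Cauchy problem at time $t_1$ agree. This follows from the uniqueness statement in Theorem~\ref{eq:existencerelay}, since any candidate extension obtained by solving the problem anew on $[t_1,t_2]$ and concatenating with the solution on $[0,t_1]$ is itself an $\mathrm{H}^1$-solution of~\eqref{eq:relay} on $[0,t_2]$ starting from $\xin$. Once this technicality is dispensed with, the rest of the argument is essentially immediate.
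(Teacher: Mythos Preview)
Your proof is correct and matches the paper's intent: the corollary is stated without proof as an immediate consequence of Lemma~\ref{lemma:eta_order}, and your argument makes that derivation explicit. The restart step at $t_1$ is sound but in fact superfluous: since $u=v$ a.e.\ on $[0,t_1]$ and $u\geq v$ a.e.\ on $[t_1,t_2]$, one has $u\geq v$ a.e.\ on all of $[0,t_2]$, so Lemma~\ref{lemma:eta_order} with $\xin_1=\xin_2=\xin$ applies directly on $[0,t_2]$ and gives $[\dhvp(u,\xin)](t)\geq[\dhvp(v,\xin)](t)$ for every $t\in[0,t_2]$, hence in particular on $[t_1,t_2]$, without any need to split the interval or invoke the semigroup property.
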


The following lemma establishes a continuity property
of dynamic relays.
%
\begin{lemma}\label{lemma:eta_cont}
 Let   $k>0$ be given,
then the operator $\dhvp:\rL^2(0,T)\times [-1,1]\to C([0,T])$ is Lipschitz-continuous.
More precisely, let $u_1,\,u_2$ be any functions in $ \mathrm{L}^2(0,T)$, $\xi_1,\xi_2\in[-1,1]$, and $y_1,\,y_2$
their respective solutions of the Cauchy problem (\ref{eq:relay}). Then we have
\begin{equation}
\label{eq:lip}
\|y_1-y_2\|_{C([0,T])}\leq \mathrm{e}^{\frac{T}{2}} \Big(|\xi_1-\xi_2|+k\|u_1-u_2\|_{\rL^2(0,T)}\Big).
\end{equation}
\end{lemma}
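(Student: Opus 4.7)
The plan is to mimic the energy estimate used in the proof of Lemma on order preservation, but now tracking both the initial-state discrepancy and the input discrepancy. Let $y_1,y_2\in \rH^1(0,T)$ be the solutions of \eqref{eq:relay} corresponding to the data $(u_1,\xi_1)$ and $(u_2,\xi_2)$, with associated Lagrange multipliers $q_1,q_2\in \rL^2(0,T)$ satisfying $q_i(t)\in \partial\chi_{[-1,1]}(y_i(t))$. Subtracting the two identities yields
\begin{equation*}
\frac{d}{dt}\bigl(y_1(t)-y_2(t)\bigr) + \bigl(q_1(t)-q_2(t)\bigr) = g_\rho(u_1(t)) - g_\rho(u_2(t)).
\end{equation*}

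First I would test this equation against $y_1(t)-y_2(t)$. Since $\partial\chi_{[-1,1]}$ is a maximal monotone operator, $(q_1(t)-q_2(t))(y_1(t)-y_2(t))\ge 0$ for a.e. $t$, so this term can be discarded. Using the Lipschitz bound $|g_\rho(u_1)-g_\rho(u_2)|\le k|u_1-u_2|$ stated earlier and Young's inequality, I obtain
\begin{equation*}
\frac{1}{2}\frac{d}{dt}|y_1(t)-y_2(t)|^{2} \le k|u_1(t)-u_2(t)|\,|y_1(t)-y_2(t)| \le \tfrac{1}{2}|y_1(t)-y_2(t)|^{2} + \tfrac{k^{2}}{2}|u_1(t)-u_2(t)|^{2}.
\end{equation*}

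Setting $w(t):=|y_1(t)-y_2(t)|^{2}$ gives the differential inequality $w'(t)\le w(t) + k^{2}|u_1(t)-u_2(t)|^{2}$ with $w(0)=|\xi_1-\xi_2|^{2}$. An application of Gronwall's lemma then yields
\begin{equation*}
w(t) \le \mathrm{e}^{t}|\xi_1-\xi_2|^{2} + k^{2}\int_0^{t}\mathrm{e}^{t-s}|u_1(s)-u_2(s)|^{2}\,ds \le \mathrm{e}^{T}\bigl(|\xi_1-\xi_2|^{2} + k^{2}\|u_1-u_2\|_{\rL^{2}(0,T)}^{2}\bigr)
\end{equation*}
for every $t\in[0,T]$. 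Taking the square root and using $\sqrt{a^{2}+b^{2}}\le a+b$ for $a,b\ge 0$ gives exactly \eqref{eq:lip}, and since the right-hand side is independent of $t$ the bound transfers to the $C([0,T])$-norm (continuity of $y_i$ follows from $y_i\in \rH^{1}(0,T)$).

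No serious obstacle arises: the monotonicity of the multi-valued term does all the heavy lifting so that $q_1,q_2$ disappear from the estimate, and the remainder is a routine Gronwall argument. The only subtle point is the bookkeeping of the constants, specifically choosing the Young splitting so that the resulting prefactor is $\mathrm{e}^{T/2}$ and the dependence on the input difference enters with the factor $k$ exactly as claimed.
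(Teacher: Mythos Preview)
Your proof is correct and essentially identical to the paper's: both test the difference equation against $y_1-y_2$, drop the multiplier term by monotonicity of $\partial\chi_{[-1,1]}$, use the Lipschitz bound on $g_\rho$ together with Young's inequality, and finish with Gronwall. The only cosmetic difference is that you apply Young's inequality pointwise before integrating, whereas the paper integrates first and then applies Cauchy--Schwarz followed by Young on the time integral; both routes give the same final estimate.
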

\begin{proof}
Let us denote by $y_1(t)$ and $y_2(t)$ the solutions of \eqref{eq:relay} corresponding to functions $u_1$, $u_2$ in $\rL^2(0,T)$
and initial conditions $\xi_1$ and $\xi_2$, respectively.
Then there must exist $q_i(t)\in \partial \chi_{[-1,1]}(y_i(t)),\ i=1,2 $  such that
$$
\dfrac{dy_i}{dt}(t)+q_i(t)=g_\rho(u_i(t)),\ i=1,2.
$$
Let us subtract the above equations for $i=1,2$ and then make the
scalar product by $y_1-y_2$. We get
$$
\frac{1}{2}\frac{d}{dt}|y_1(t)-y_2(t)|^2 \leq |g_\rho(u_1(t))-g_\rho(u_2(t))|\,|y_1(t)-y_2(t)|
$$
because $ (q_1(t)-q_2(t))(y_1(t)-y_2(t)) \geq 0, $
as $\partial \chi_{[-1,1]}$ is monotone.
By integrating from $0$ to $t$ we obtain
\begin{align*}
\frac{1}{2}|y_1(t)-y_2(t)|^2\leq \frac{1}{2}|\xi_1-\xi_2|^2
+\int_0^t |g_\rho(u_1(s))-g_\rho(u_2(s))|\,|y_1(s)-y_2(s)|\ds\\
 \leq \frac{1}{2}|\xi_1-\xi_2|^2+\left(\int_0^t |g_\rho(u_1(s))
 -g_\rho(u_2(s))|^2\ds\right)^{1/2}\,\left(\int_0^t |y_1(s)-y_2(s)|^2\ds\right)^{1/2}
\\
\leq \frac{1}{2}|\xi_1-\xi_2|^2+\frac{1}{2}\int_0^t |g_\rho(u_1(s))
-g_\rho(u_2(s))|^2\ds+\frac{1}{2}\int_0^t |y_1(s)-y_2(s)|^2\ds
\end {align*}
and then
\begin{align*}
|y_1(t)-y_2(t)|^2
\leq |\xi_1-\xi_2|^2+k^2\int_0^t |u_1(s)-u_2(s)|^2\ds+\int_0^t |y_1(s)-y_2(s)|^2\ds.
\end{align*}
By using the generalized Gronwall's inequality
(see, for instance, Lemma 6.2 in \cite{H80}) we deduce
$$
|y_1(t)-y_2(t)|^2\leq\,\mathrm{e}^t\left(|\xi_1-\xi_2|^2+k^2\int_0^t |u_1(s)-u_2(s)|^2\ds\right)
$$
and finally
\begin{align*}
\max_{t\in [0,T]} |y_1(t)-y_2(t)|\leq \mathrm{e}^{\frac{T}{2}}
\left(|\xi_1-\xi_2|^2+k^2\int_0^T |u_1(s)-u_2(s)|^2\ds\right)^{\frac{1}{2}}
\\
\leq \mathrm{e}^{\frac{T}{2}}\left(|\xi_1-\xi_2|+k\Big(\int_0^T |u_1(s)-u_2(s)|^2\ds\Big)^{\frac{1}{2}}\right)
\end{align*}
which finishes the proof.\qed
\end{proof}


\section{Dynamic Preisach model}\label{DPM}

In this section we introduce the dynamic Preisach operator and prove
some properties which are similar to the ones satisfied by the classical Preisach operator.
Given $\vpc>0$, let us consider again the \textit{Preisach triangle}
$\PTr:=\{ \rho=(\vpu,\vpd) \in \mathbb{R}^2: -\vpc\leq \vpu \leq
\vpd\leq \vpc\}$ (see Figure~\ref{fig:triangle_p} (left)) and the \textit{Preisach function}
$\pdisf\in \rL^1(\PTr)$ with $\pdisf>0$.  We denote by $Y$ the convex set of initial
configurations which can be defined by
$$
Y:=\left\{v\in L^1_{\pdisf}(\PTr) : |v(\rho)| \leq 1, \mbox{  a.e.  }  \rho \in \PTr  \right\}
$$
where
 $$L^1_{\pdisf}(\PTr):=\{\xi:\PTr \to \mathbb{R}
\mbox{ Lebesgue-measurable such that } \int_{\PTr}|\xi(\rho)|\pdisf(\rho)\dro <\infty\}$$
endowed with the norm
$$
\|\xi\|_{L^1_{\pdisf}(\PTr)}:=\int_{\PTr}|\xi(\rho)|\pdisf(\rho)\dro.
$$

Let us   define the dynamic Preisach operator (see \cite{B92})
\begin{align}
& \Preit:  \rL^2(0,T)\times Y  \longrightarrow  \rH^1(0,T), \nonumber\\
& (u,\xi) \longmapsto
[\Preit(u,\xi)](t)=\int_{\PTr}[\eta_{\rho}(u,\xi(\rho))](t)\pdisf(\rho)\dro,
\label{h_Fmu}
\end{align}
Notice that, if $\eta_\rho$ is replaced by $h_\rho$ given by \eqref{h_rho} then we obtain the classical, rate-independent, Preisach model.

Mathematical properties of the classical Preisach operator are well known
(see, for instance, \cite{Vi94}) but this is not the case for the dynamic model.
From Lemmas \ref{lemma:eta_order} and \ref{lemma:eta_cont}
we obtain the following properties of the dynamic Preisach model.
\begin{lemma}\label{lemma:Preisach_cont} The dynamic Preisach operator
$\Preit:  \mathrm{L}^2(0,T) \times Y \longrightarrow  \rH^1(0,T)$
is order preserving $(\textrm{cf}.~\eqref{eq:order_pre})$  and satisfies the
following properties:
\begin{itemize}
 \item It is Lipschitz-continuous; more precisely, for all
 $u_1,u_2\in \mathrm{L}^2(0,T)$ and $\xi_1,\xi_2\in Y$,
 \begin{multline*}
\|\Preit(u_1,\xin_1)-\Preit(u_2,\xin_2)\|_{C([0,T])}\\
\leq \mathrm{e}^{\frac{T}{2}}\left(\|\xi_1-\xi_2\|_{L^1_{\pdisf}(\PTr)}+
k\|u_1-u_2\|_{\mathrm{L}^2(0,T)}\int_{\PTr}\pdisf(\rho)\dro\right).
\end{multline*}
\item
It is bounded in the following sense: for all $v\in \mathrm{L}^2(0,T)$ and $\xi\in Y$,
\begin{equation}
\label{eq:cota}
\norm{\Preit(v,\xin)}{C([0,T])}\leq \displaystyle\int_{\PTr}\pdisf(\rho)\dro.
\end{equation}
\end{itemize}
\end{lemma}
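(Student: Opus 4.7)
The plan is to obtain all three properties by lifting the pointwise (in $\rho$) results of Lemmas~\ref{lemma:eta_order} and~\ref{lemma:eta_cont} to the integrated operator $\Preit$ defined in \eqref{h_Fmu}. The scheme is the same in each case: fix $t \in [0,T]$, apply the corresponding bound on $\eta_\rho$ for each $\rho \in \PTr$, and then integrate against the Preisach density $\pdisf(\rho)$. A preliminary remark is that Borel measurability of $\rho \mapsto \eta_\rho(u,\xi(\rho))(t)$ on $\PTr$ follows from the Lipschitz-continuous dependence on $\rho$ (which is continuous via the explicit form of $g_\rho$) and the measurability of $\xi$, so the integrals in \eqref{h_Fmu} are well defined.

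For \emph{order preservation}, I would fix $t$ and apply Lemma~\ref{lemma:eta_order} to each pair $\bigl(\eta_\rho(u_1,\xi_1(\rho)),\eta_\rho(u_2,\xi_2(\rho))\bigr)$ under the pointwise assumptions $u_1 \le u_2$ a.e.\ in $[0,T]$ and $\xi_1(\rho) \le \xi_2(\rho)$ a.e.\ in $\PTr$. Since $\pdisf > 0$, integrating the resulting inequality over $\PTr$ against $\pdisf(\rho)\,d\rho$ yields $\Preit(u_1,\xi_1)(t) \le \Preit(u_2,\xi_2)(t)$.

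For \emph{Lipschitz continuity}, fix $t \in [0,T]$ and apply the estimate \eqref{eq:lip} of Lemma~\ref{lemma:eta_cont} pointwise in $\rho$:
\begin{equation*}
\bigl|[\eta_\rho(u_1,\xi_1(\rho))](t) - [\eta_\rho(u_2,\xi_2(\rho))](t)\bigr|
\le \mathrm{e}^{T/2}\Bigl(|\xi_1(\rho)-\xi_2(\rho)| + k\|u_1-u_2\|_{\rL^2(0,T)}\Bigr).
\end{equation*}
Then I would use the triangle inequality for the integral definition of $\Preit$ and integrate against $\pdisf(\rho)\,d\rho$ to obtain
\begin{equation*}
\bigl|[\Preit(u_1,\xi_1)](t) - [\Preit(u_2,\xi_2)](t)\bigr|
\le \mathrm{e}^{T/2}\!\left(\|\xi_1-\xi_2\|_{L^1_{\pdisf}(\PTr)} + k\|u_1-u_2\|_{\rL^2(0,T)}\!\int_{\PTr}\pdisf(\rho)\,d\rho\right).
\end{equation*}
Taking the supremum over $t \in [0,T]$ on the left-hand side (the right-hand side is independent of $t$) gives the stated bound in $C([0,T])$.

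For the \emph{boundedness} property \eqref{eq:cota}, I would use the a priori bound $|\eta_\rho(v,\xi(\rho))(t)| \le 1$ for all $t$ (which is built into the definition of the dynamic relay via $-1 \le y \le 1$). Integrating against $\pdisf(\rho)\,d\rho$ and taking the supremum in $t$ gives $\|\Preit(v,\xi)\|_{C([0,T])} \le \int_{\PTr}\pdisf(\rho)\,d\rho$. The only subtle point in the whole argument is verifying the measurability in $\rho$ needed to apply Fubini-type reasoning and to make sense of the integrals, but this is essentially a routine consequence of the continuous dependence of $\eta_\rho$ on $\rho$ together with the measurability of $\xi \in Y$; so I do not expect any genuine obstacle beyond this bookkeeping.
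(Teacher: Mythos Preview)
Your proposal is correct and follows essentially the same approach as the paper: order preservation comes from Lemma~\ref{lemma:eta_order} together with the positivity of $\pdisf$, Lipschitz continuity from integrating the pointwise estimate \eqref{eq:lip} of Lemma~\ref{lemma:eta_cont} against $\pdisf(\rho)\,d\rho$, and the bound \eqref{eq:cota} from $|\eta_\rho|\le 1$. The only cosmetic difference is that the paper first passes the $\sup_t$ inside the integral and then applies \eqref{eq:lip}, whereas you fix $t$, bound, and take the supremum at the end; these are equivalent. Your remark on measurability in $\rho$ is a welcome bit of bookkeeping that the paper leaves implicit.
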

\begin{proof} The order preserving property is a consequence of Lemma \ref{lemma:eta_order}
and the positivity of the Preisach function $p$.

The Lipschitz-continuity follows from Lemma~\ref{lemma:eta_cont}. Indeed, we have
\begin{align*}
\|\Preit(u_1,\xin_1)-\Preit(u_2,\xin_2)\|_{C[0,T]}&\leq
\int_{\PTr}\max_{t\in [0,T]}\left\{|[\eta_{\rho}(u_1,\xi_1(\rho))](t)
-[\eta_{\rho}(u_2,\xi_2(\rho))](t)|\right\}\pdisf(\rho)\dro
\\
&\leq \mathrm{e}^{\frac{T}{2}}\left(\int_{\PTr}|\xi_1(\rho)-\xi_2(\rho)
|\pdisf(\rho)\dro+ k\|u_1-u_2\|_{\mathrm{L}^2(0,T)}\int_{\PTr}\pdisf(\rho)\dro\right),
\end{align*}
from which the result follows.

Finally \eqref{eq:cota} is a consequence of the fact that
$|\eta_{\rho}(u,\xi(\rho))(t)|\leq 1 \ \forall t\in [0,T].$\qed
\end{proof}

As mentioned, in the classical Preisach model the relay $h_\rho$
only takes values $+1$ or $-1$. Thus, at each time $t\geq t^0$,
the Preisach triangle $\PTr$
is subdivided into two sets (one possibly empty):
\begin{align*}
S_u^{-}(t):=\left\{(\vpu,\vpd)\in\PTr: [\hvp(u,\xi)](t)=-1\right\},
\, S_u^{+}(t):=\left\{(\vpu,\vpd)\in\PTr: [\hvp(u,\xi)](t)=1\right\},
\end{align*}
and consequently
$$
[\Preic(u,\xi)](t)=\int_{S_u^{+}(t)}\pdisf(\rho)\dro-\int_{S_u^{-}(t)}\pdisf(\rho)\dro.
$$
However, this is not the case for the dynamic model where the
relays vary at finite rate between
$-1$ and $1$. Figure~\ref{fig:ex_3}  shows the
classical and dynamic relay configuration  with respect to the same input
$u$ depicted in Figure~\ref{fig:ex_3} (top left). For this example we have considered a  Preisach triangle
 characterized by $\rho_0 =300$,  a constant $k=50$ and the demagnetized state as initial condition (cf.~\eqref{eq:demag}).
\begin{figure}[h]
\centering
\includegraphics[width=0.46\textwidth]{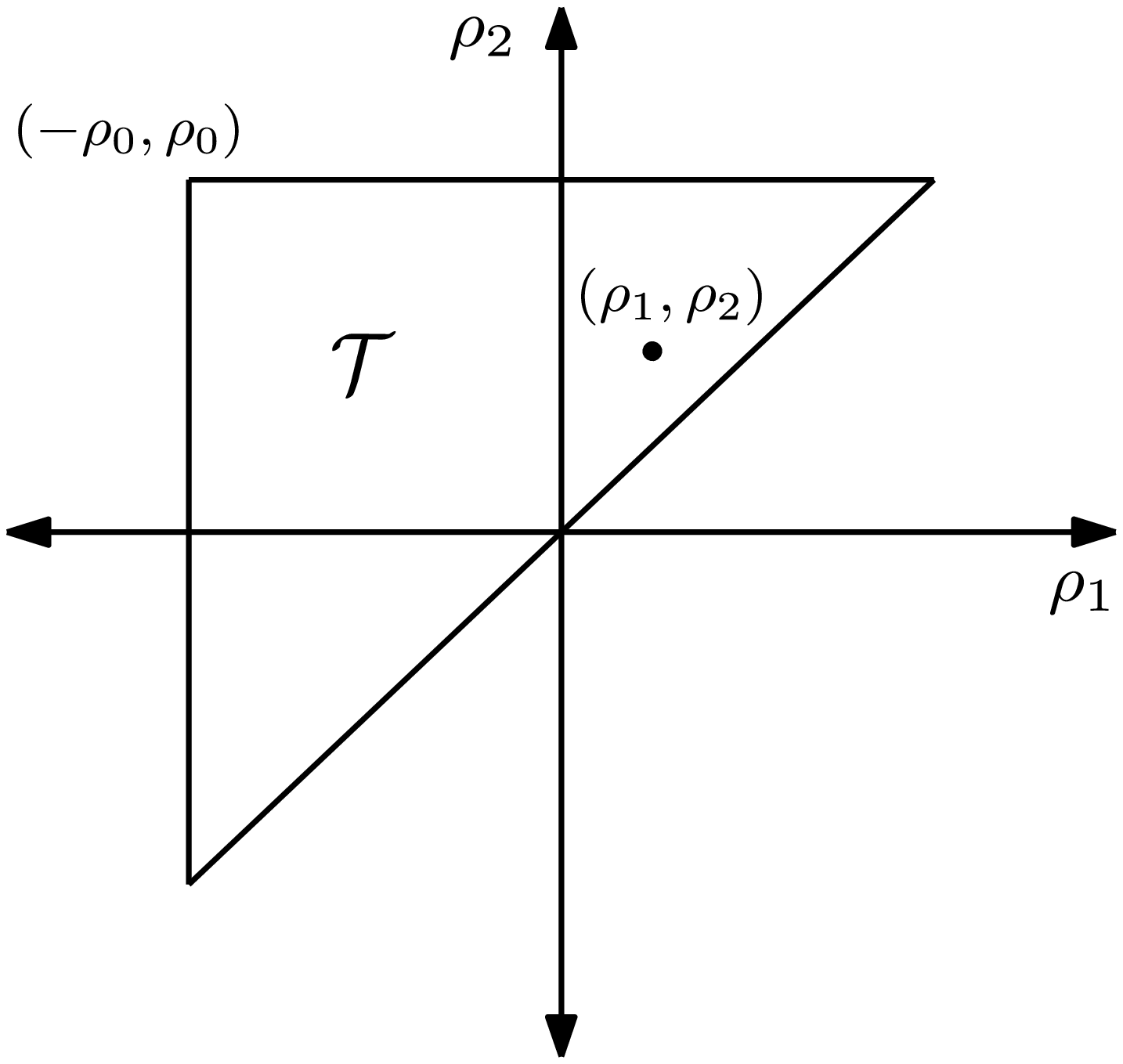}
\raisebox{0.15\height}{\includegraphics[width=0.46\textwidth]{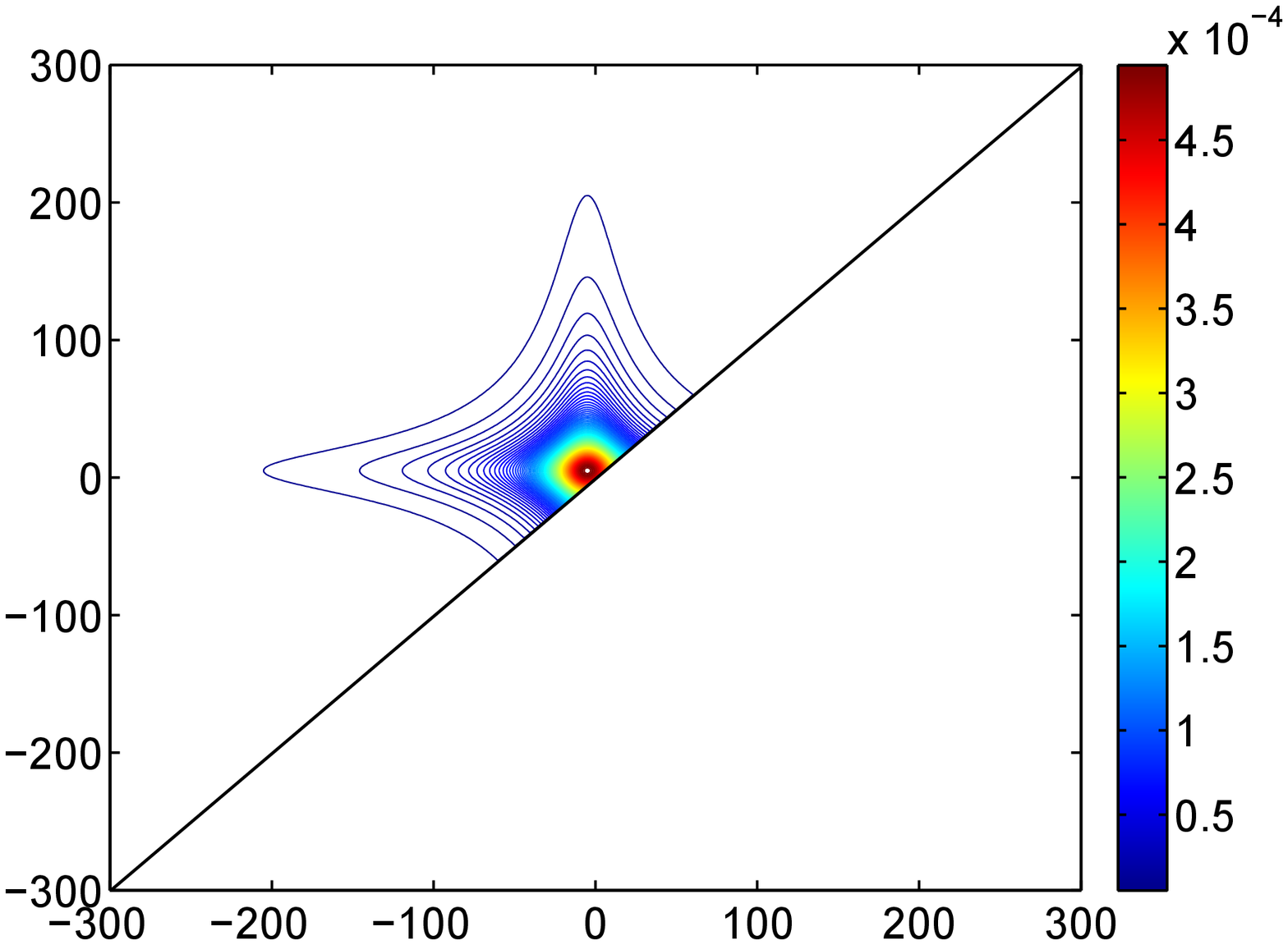}}
\caption{Preisach triangle (left) and Factorized-Lorentzian  distribution $p$ (right)
with $N=1/2000$, $\omega=5$ and $\gamma=4$.}
\label{fig:triangle_p}
\end{figure}

We also compute the dynamic Preisach operator $\Preit(u,\xi)$ for $u$ depicted in
Figure~\ref{fig:ex_3} (top left) for different $k$ values and frequencies. For the dynamic relay we have considered $k=50$ and the
Preisach function $p$ is given by the Factorized-Lorentzian
distribution~\cite{Bertotti1998} (see Figure~\ref{fig:triangle_p} (right)):
\begin{align}\label{facLor}
\pdisf(\vpu,\vpd):=N\left(1+\left(\dfrac{\vpd-\cta}{\gamma \cta}\right)^2\right)^{-1}
\left(1+\left(\dfrac{\vpu+\cta}{\gamma \cta}\right)^2\right)^{-1}
\end{align}
with $N=1/2000$, $\omega=5$ and $\gamma=4$. Figure~\ref{fig:BH_curve} shows
the dynamic curve $(u,\Preit(u,\xi))$ for different $k$ values and input velocities.

\begin{figure}[h]
\begin{center}
\includegraphics[width=0.44\textwidth, height=0.33\textwidth]{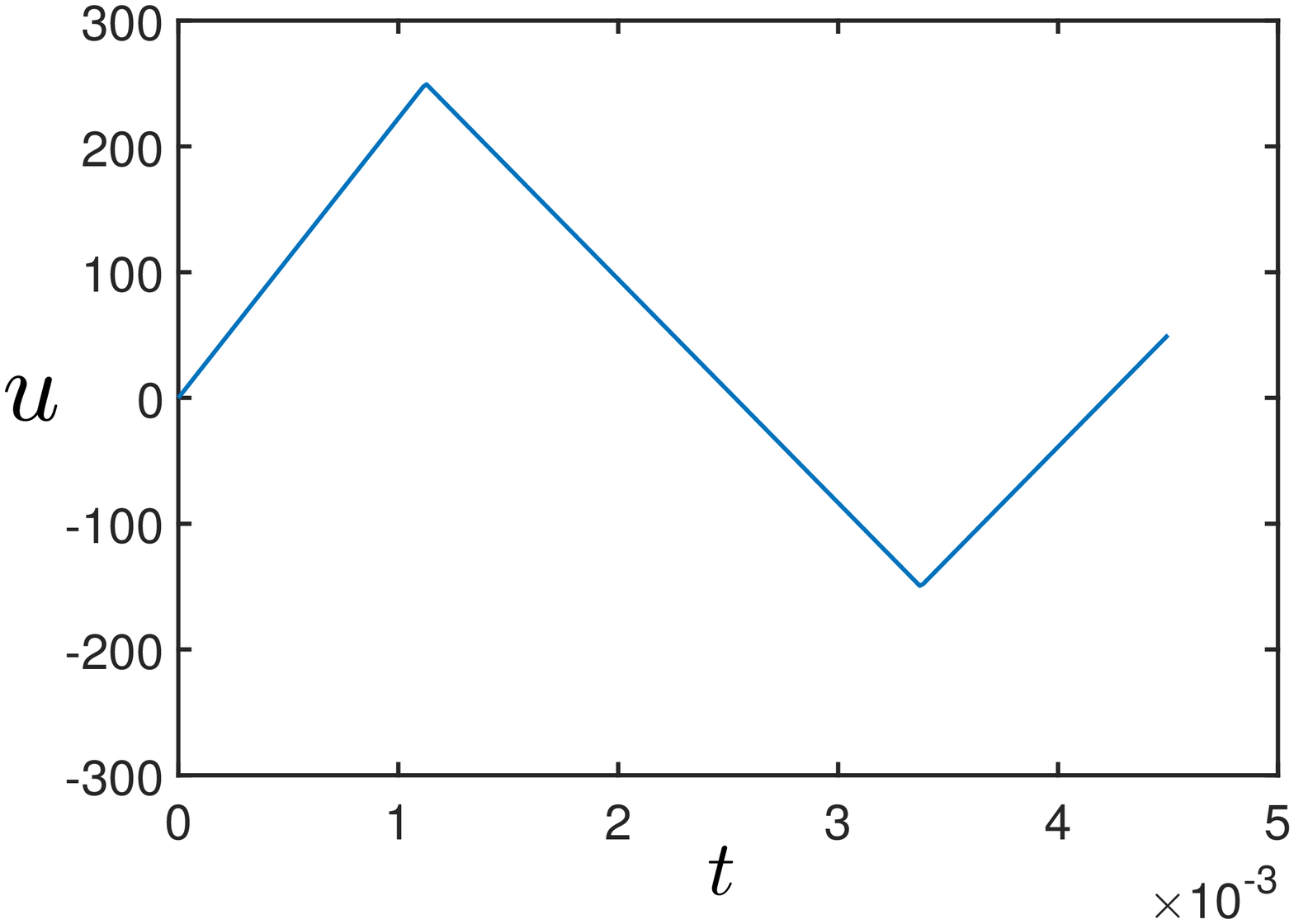}
\includegraphics[width=0.4\textwidth]{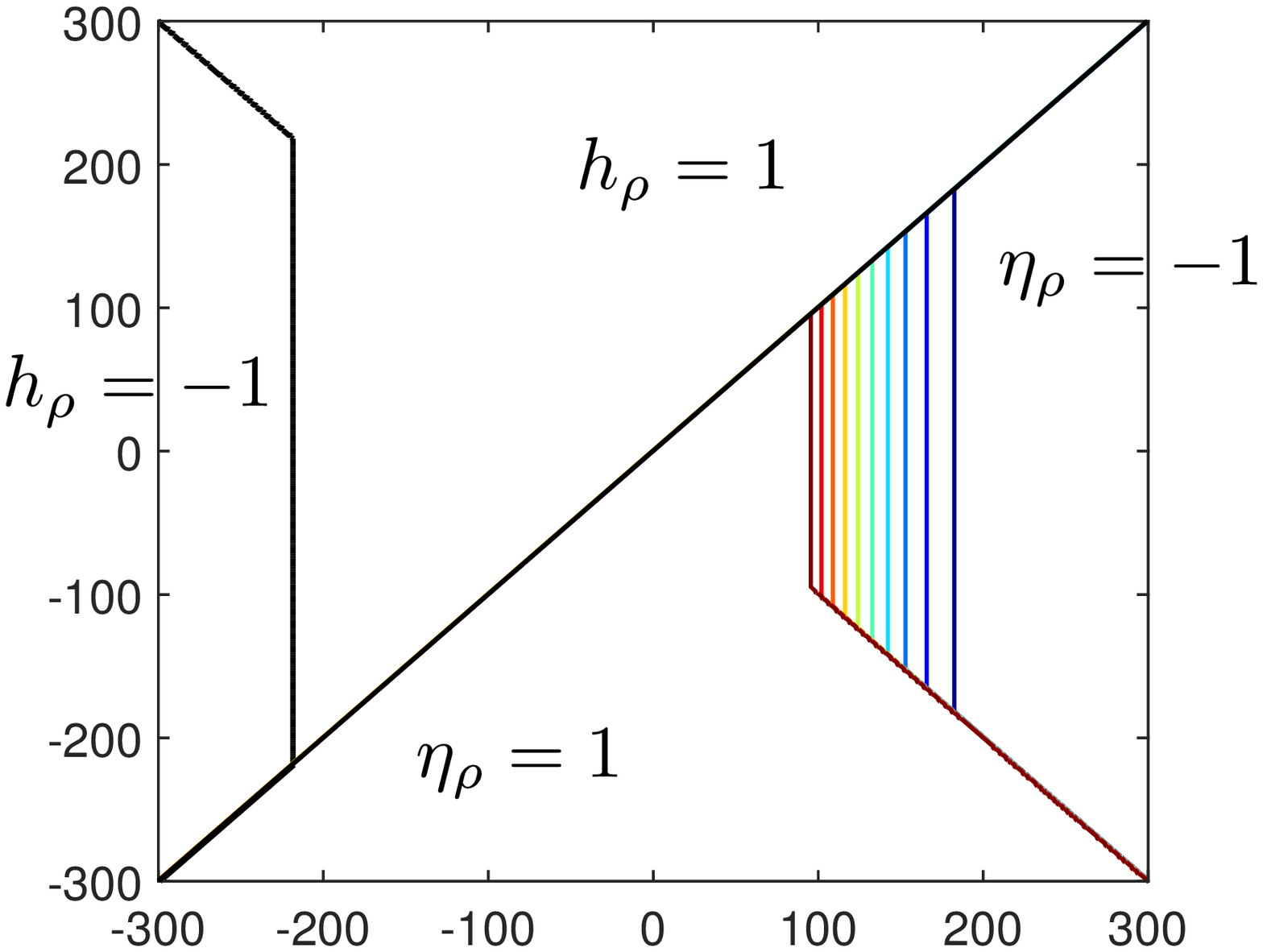}\\
\includegraphics[width=0.41\textwidth]{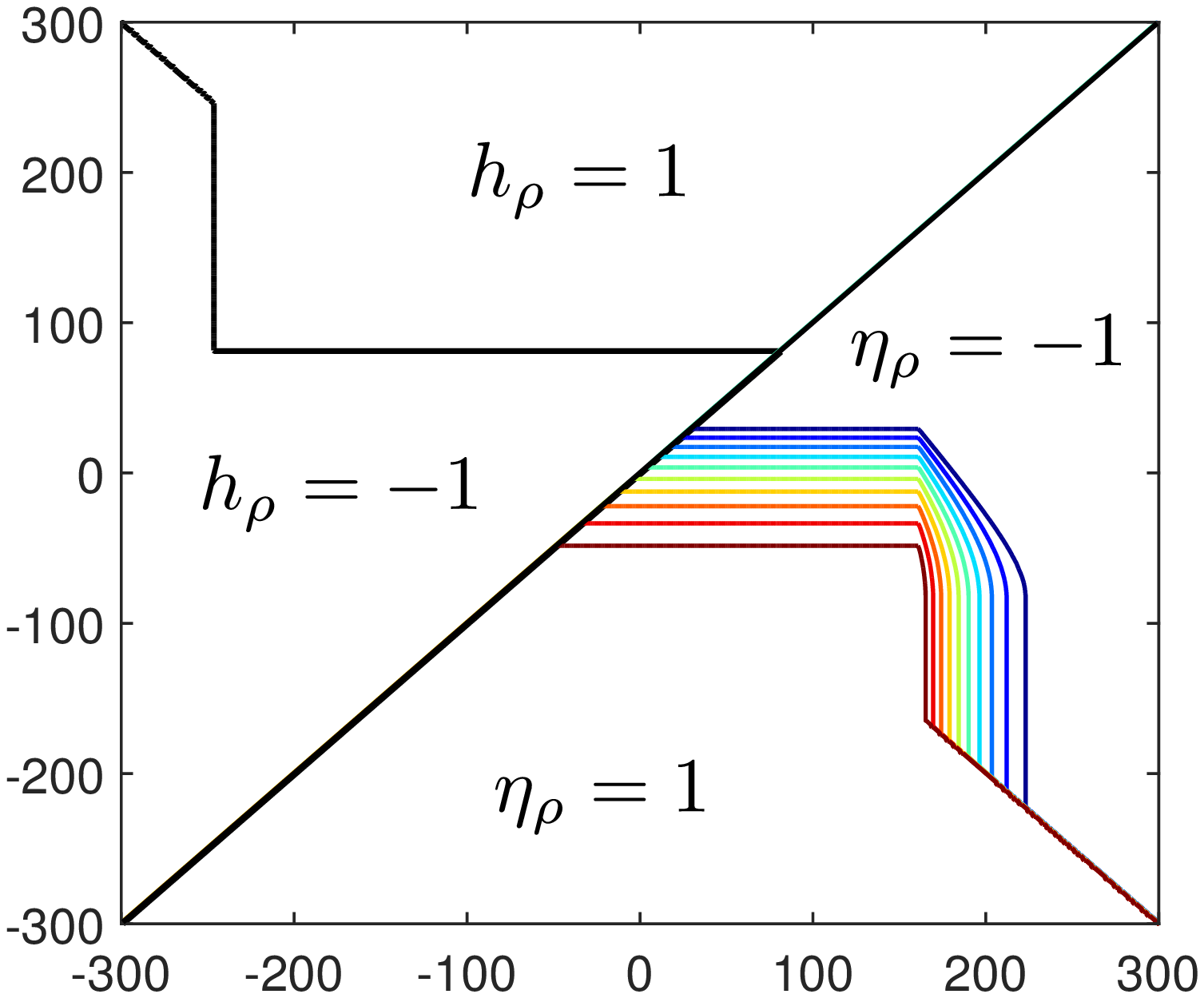}
\quad\includegraphics[width=0.4\textwidth]{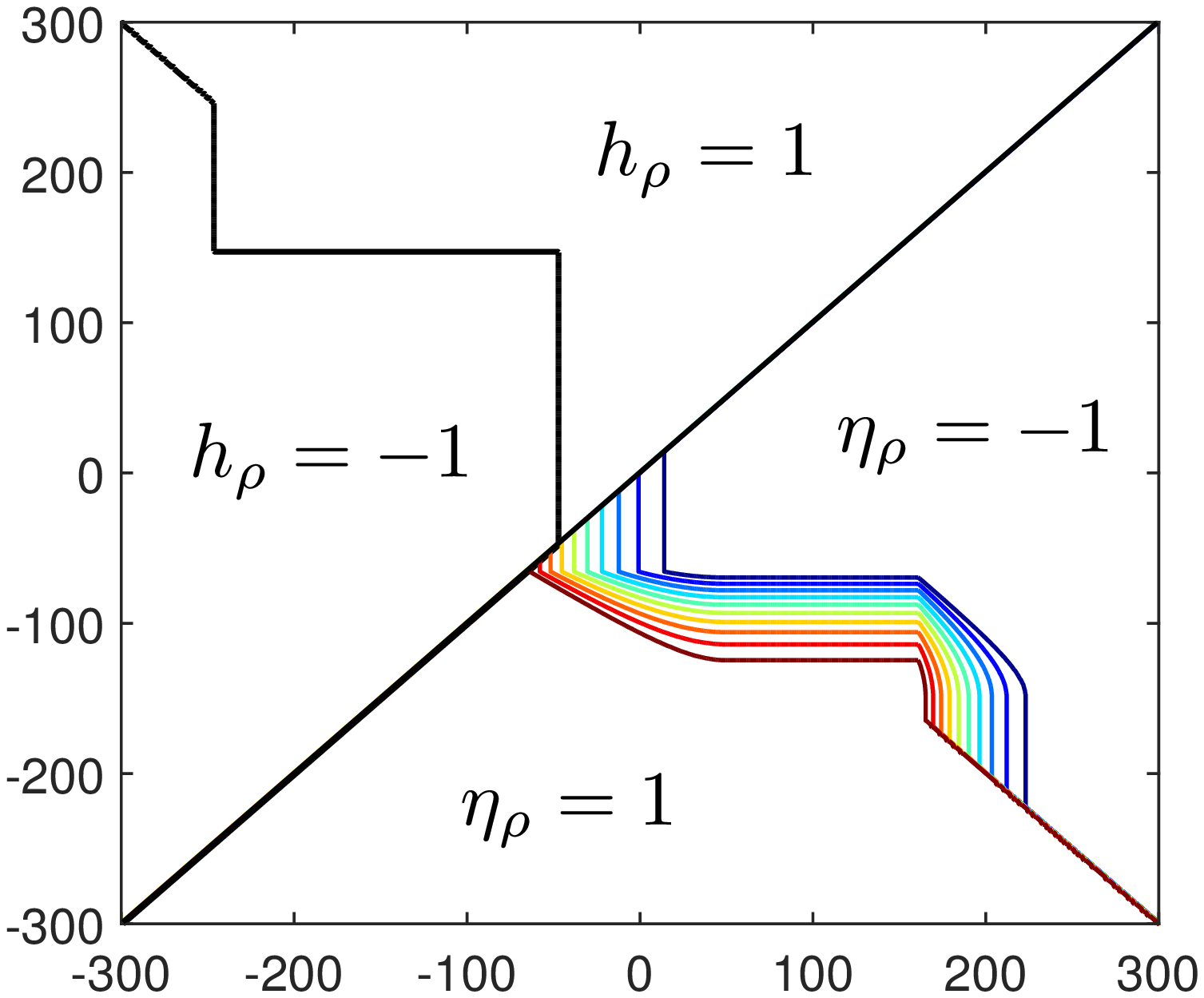}
\end{center}
\caption{Input function $u(t)$ (top left) defined in $[0,0.0045]$. The isolines represent the
corresponding dynamic relay values $\eta_{\rho}$ for $k=50$  and the classical relay
$h_{\rho}$  at $t=0.001$ (top right), $t=0.003$ (bottom left) and t= 0.0045 (bottom right). For compactness we have
considered the dynamic and classical relay in the same figure.}
\label{fig:ex_3}
\end{figure}
\begin{figure}[h]
\begin{center}
\includegraphics[width=0.45\textwidth]{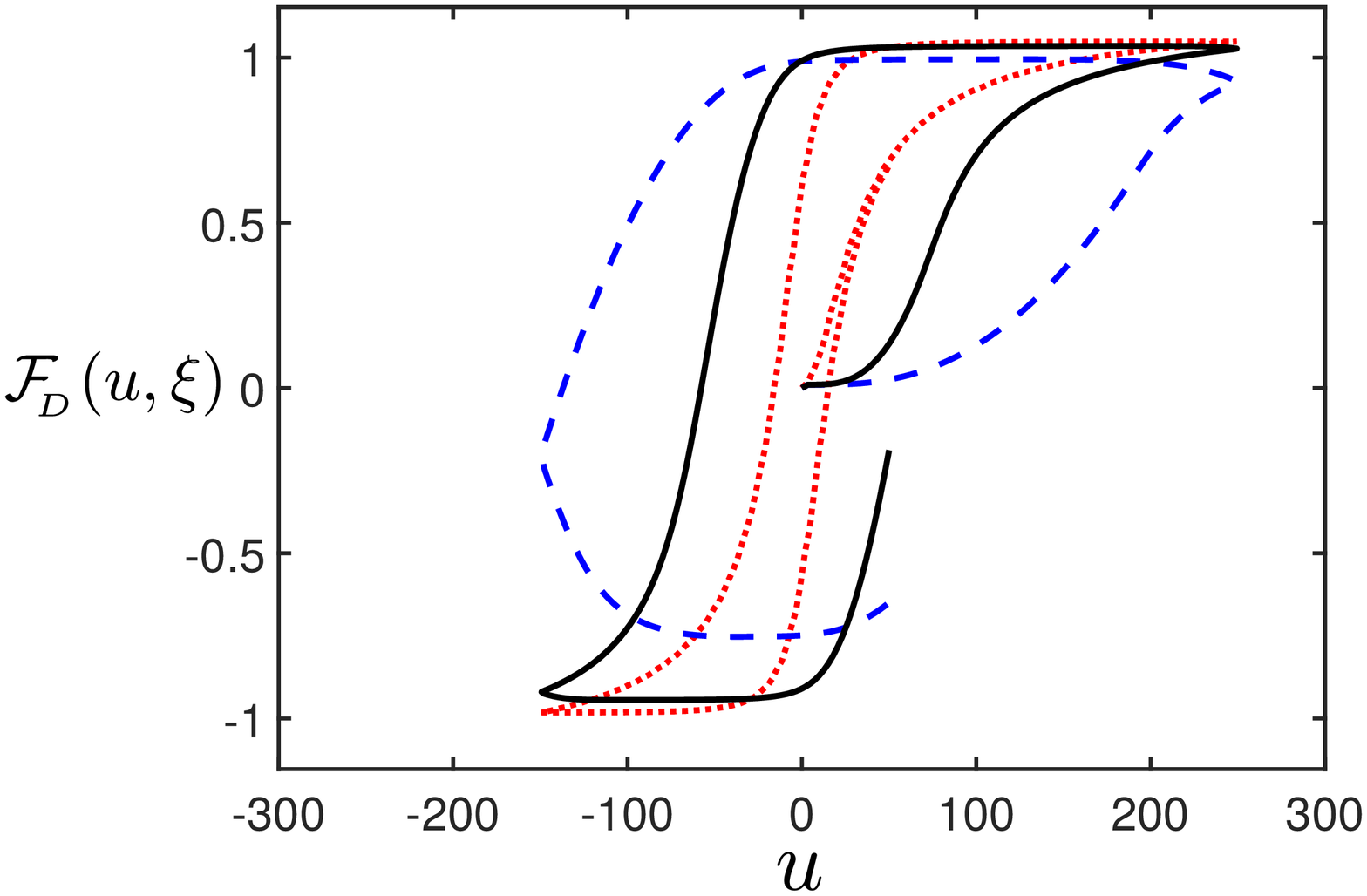}\quad
\includegraphics[width=0.45\textwidth]{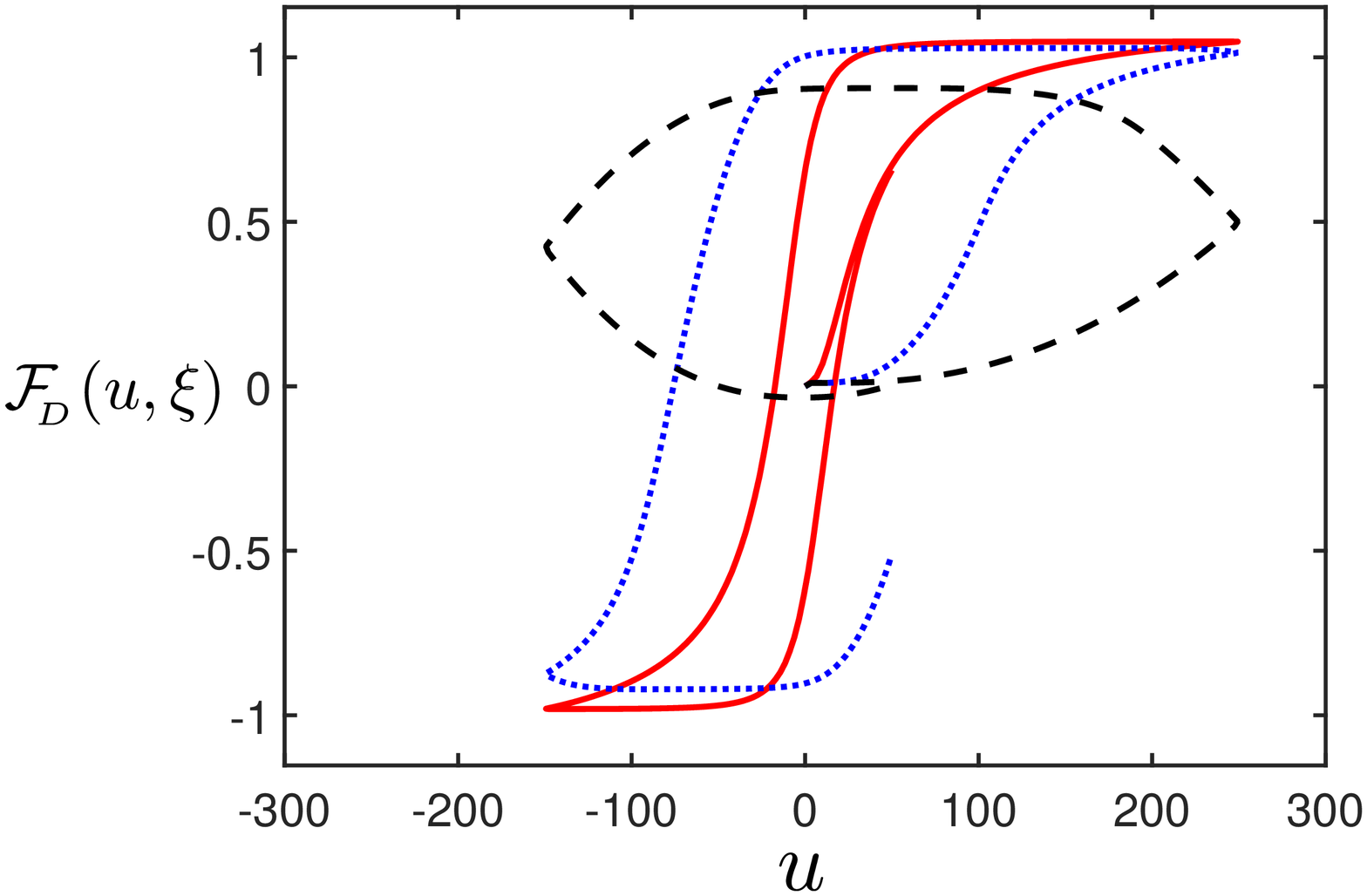}
\end{center}
\caption{ Dynamic Preisach hysteresis curve $\Preit$ for a
Factorized-Lorentzian  distribution, diferent $k$ values and input $u(t)$ depicted on Figure~\ref{fig:ex_3} (top left).
Left: input $u$ defined in
$[0,0.0045]$ and slope values  $k=25$ (dashed  line), $k=200$ (solid line) and
 $k=10^8$ (dotted line). Right: slope value $k=100$ and input function $u(t)$
 defined in $[0,4.5]$ (solid line), $[0,0.0045]$ (dotted line) and  $[0,0.00045]$  (dashed  line).
}
\label{fig:BH_curve}
\end{figure}
Finally, since we are interested in the mathematical analysis and computation of
distributed electromagnetic models (also called field models),
following \cite{Vi94} we introduce a space-time
dependent operator. Given  a time dependent input field
$u(x,\cdot)\in \rL^2(0,T)$ and an initial state field $\xi(x)\in Y,$
we define a space and time
dependent hysteresis operator $\hists:\rL^2(0,T;\rL^2(\Om))\times \rL^2(\Omega;Y)\to C(0,T;\rL^2(\Om))$ as follows:
\begin{equation}\label{eq:space_hyst}
[\hists(u,\xi)](x,t):=[\Preit(u(x,\cdot),\xi(x))](t)\quad ,\mbox{  a.e. in } [0,T]\times\Omega
\end{equation}
where $\rL^2(\Omega;Y) $ is a subset of  $\rL^2(\Omega;\rL^1_{\pdisf}(\PTr))$,
the space of all function $v:\Om\to \rL^1_{\pdisf}(\PTr)$ such that
$$
\norm{v}{\rL^2(\Omega;\rL^1_{\pdisf}(\PTr))}:=\left(\int_{\Omega}\norm{u }{\rL^1_{\pdisf}(\PTr)}^2\right) ^{1/2}<\infty.
$$
Let us emphasize that operator $\mathcal{F}$ is local in $x$ but non-local in $t$.
We end this section with  the following properties of
 dynamic operator $\hists$ which follow from Lemma~\ref{lemma:Preisach_cont}.
\begin{lemma}\label{lemma:F_prop}  The dynamic Preisach operator
$\hists:\rL^2(0,T;\rL^2(\Om))\times\rL^2(\Om;Y)\to C(0,T;\rL^2(\Om))$ is  uniformly bounded, order
preserving $(\textrm{cf}.~\eqref{eq:order_pre})$ and Lipschitz-continuous in the following sense: there
exists $C$ depending on $k$ such that, for all $u_1,u_2 \in \rL^2(0,T;\rL^2(\Om))$ and $\xi_1,\xi_2 \in \rL^2(\Om;Y)$
\begin{equation*}
\|\hists (u_1,\xi_1)-\hists(u_2,\xi_2)\|_{C(0,T;\rL^2(\Om)))}\leq
C\left(\norm{\xi_1-\xi_2}{\rL^2(\Om;\rL^1_{\pdisf}(\PTr))}+\norm{u_1-u_2}{\rL^2(0,T;\rL^2(\Om))}\right).
\end{equation*}
\end{lemma}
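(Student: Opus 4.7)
The plan is to transfer each of the three claimed properties from the scalar dynamic Preisach operator $\Preit$ to the space-time operator $\hists$ via the pointwise-in-$x$ definition \eqref{eq:space_hyst}. Lemma~\ref{lemma:Preisach_cont} already provides the required estimates at each $x\in\Om$, so what remains is to integrate or take suprema in the right order to land in the $C(0,T;\rL^2(\Om))$ norm.

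I would dispatch order preservation and uniform boundedness first. If $u_1\leq u_2$ a.e.\ in $[0,T]\times\Om$ and $\xi_1\leq \xi_2$ a.e.\ in $\Om$, the order preservation of $\Preit$ applied for a.e.\ $x$ yields $[\hists(u_1,\xi_1)](x,t)\leq [\hists(u_2,\xi_2)](x,t)$ for every $t$. For boundedness, the pointwise inequality \eqref{eq:cota} gives $|[\hists(u,\xi)](x,t)|\leq \int_{\PTr}\pdisf(\rho)\dro$ uniformly in $(x,t,u,\xi)$, and squaring and integrating over $\Om$ bounds $\|\hists(u,\xi)(\cdot,t)\|_{\rL^2(\Om)}$ by $|\Om|^{1/2}\int_{\PTr}\pdisf(\rho)\dro$ uniformly in $t$.

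The Lipschitz estimate is the main step. At each fixed $x$, Lemma~\ref{lemma:Preisach_cont} gives
\[
\max_{t\in[0,T]}\big|[\hists(u_1,\xi_1)-\hists(u_2,\xi_2)](x,t)\big|
\leq \mathrm{e}^{T/2}\Big(\|\xi_1(x)-\xi_2(x)\|_{\rL^1_{\pdisf}(\PTr)}+k\|u_1(x,\cdot)-u_2(x,\cdot)\|_{\rL^2(0,T)}\int_{\PTr}\pdisf(\rho)\dro\Big).
\]
I would then square both sides, apply $(a+b)^2\leq 2a^2+2b^2$, integrate over $\Om$, and invoke Fubini's theorem to identify $\int_\Om\|u_1(x,\cdot)-u_2(x,\cdot)\|_{\rL^2(0,T)}^2\,dx$ with $\|u_1-u_2\|_{\rL^2(0,T;\rL^2(\Om))}^2$. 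Combined with the one-sided inequality $\max_{t}\int_\Om |\cdot|^2\,dx \leq \int_\Om\max_{t}|\cdot|^2\,dx$ (used in exactly the direction needed to upper bound the target norm) and a square root, this delivers the claimed estimate with a constant depending on $T$, $k$, $|\Om|$ and $\int_{\PTr}\pdisf(\rho)\dro$.

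No deep analytical obstacle is anticipated: the argument is essentially a lift of pointwise scalar estimates. The two minor technical points that have to be checked are (i) the measurability in $x$ of $(x,t)\mapsto [\Preit(u(x,\cdot),\xi(x))](t)$, which follows from the Lipschitz continuity in Lemma~\ref{lemma:eta_cont} applied pointwise to each relay together with the Borel measurability of $\xi\in\rL^2(\Om;Y)$, so that the Fubini step is justified; and (ii) the fact that the stated constant $C$ hides the harmless dependence on $T$, $|\Om|$ and the $\rL^1$-mass of $\pdisf$ in addition to $k$.
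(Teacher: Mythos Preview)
Your proposal is correct and follows exactly the approach indicated in the paper, which simply states that the properties ``follow from Lemma~\ref{lemma:Preisach_cont}'' without giving further details. Your argument spells out precisely how the pointwise-in-$x$ definition \eqref{eq:space_hyst} together with the scalar estimates of Lemma~\ref{lemma:Preisach_cont} yields each of the three claims after integrating over $\Om$ and interchanging the supremum in $t$ with the integral in $x$.
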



\section{Parabolic problem with dynamic hysteresis}\label{PWH}

In this section we introduce a parabolic problem with dynamic
hysteresis for which we state an existence result by using the properties proved on the previous section.

 Let $T>0$ and $\Omega \in \R^d, d=2,3$
be a bounded domain with smooth boundary $\Gamma=\partial \Omega$.
Let $V\subset  H $ be two Hilbert spaces of scalar functions defined in $\Omega$ with continuous, dense,
compact  embedding. Then we have $V\subset H \equiv H '\subset V'$.
We consider a mapping $ a:(0,T)\times V\times V\to \R$ such that $ a(t,\cdot,\cdot)$ is bilinear a.e. $ t\in (0,T)$.
 We are interested in the
mathematical analysis of the following parabolic problem:
find $u \in\rL^2(0,T;V)\cap\rL^{\infty}(0,T; H )$ with $\partial_t u \in\rL^2(0,T;V')$
and $w\in \rL^2(0,T; H )$ with $\partial_t w\in \rL^2(0,T;V')$, such that
\begin{subequations}\label{VF:main}
\begin{align}
\label{VF:a}
\left\langle\partial_t u+\partial_t w, v
\right\rangle_{V,V'}+ a(t,u,v)
&=\langle f,v\rangle_{V,V'}\quad \forall v \in V,\quad\mbox{a.e. in  } (0,T],\\
\label{VF:b}
w&= \hists(u,\xi) \qquad \mbox{in  } \Omega\times[0,T],\\
\label{VF:c}
(u+w)(0)&=u_0+w_0\,\,\,\,\quad \mbox{in  } \Omega.
\end{align}
\end{subequations}
Here, operator $\hists$ is defined by \eqref{eq:space_hyst}. We introduce the following assumptions that will be used to prove
the existence of a solution to \eqref{VF:main}:
\begin{itemize}
\item[H.1] $ a(\cdot,u,v)$ is a continuous form in
$V\times V$. Moreover,
 it is Lipschitz continuous in $t$ and  satisfies the G{\aa}rding's inequality
 \begin{equation} \label{eq:a_coerc}
 a(t,v,v)+\lambda  \|G\|_{{ H }}^2 \geq \gamma \|G\|_{V}^2\quad
 \forall v\in V, \quad \forall t \in [0,T],
\end{equation}
for some constants  $\lambda,\gamma \geq 0$.
\item[H.2] $f$ belongs to $\rH^1(0,T;V')$, $u_0\in V$ and  $w_0:=\hists(u_0)\in  H$.
\item[H.3] 
 For a fixed initial state
$\xi:\Om\to Y$,
 the mapping $\hists:\rL^2(0,T;H)\to C(0,T;H)$
is well defined  and Lipschitz continuous in the following sense: there exits $C>0$ such  that:
\begin{equation*}
\|\hists (u,\xi)-\hists(v,\xi)\|_{C(0,T;H))}\leq C\norm{u-v}{\rL^2(0,T;H)}.
\end{equation*}
\end{itemize}

The next result shows the existence of solution to problem
\eqref{VF:main}. The proof is carried out through three different
steps: time discretization, a priori estimates and passage to the limit
by using compactness (cf.~H.3). This approximation procedure is often
used in the analysis of equations that include a memory operator since
at any time-step we solve a stationary problem in which this operator
is reduced to a standard nonlinear mapping (see, for instance, \cite{Vi94}).
\begin{theorem}\label{thm:existence}
Let us assume H.1, H.2 and H.3 hold true.
Then, problem~\eqref{VF:main} has a solution.
\end{theorem}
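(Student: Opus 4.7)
The plan follows the Rothe strategy outlined by the authors. Fix $N\in\mathbb{N}$, set $\Dt=T/N$, $t_n=n\Dt$, $f_n:=\tfrac{1}{\Dt}\int_{t_{n-1}}^{t_n}f(s)\,ds$, and take $u_0,w_0$ as in H.2. For $n=1,\dots,N$ I seek $u_n\in V$ satisfying
\begin{equation*}
\Big\langle\tfrac{u_n-u_{n-1}}{\Dt}+\tfrac{w_n-w_{n-1}}{\Dt},v\Big\rangle_{V,V'}+a(t_n,u_n,v)=\langle f_n,v\rangle_{V,V'}\quad\forall v\in V,
\end{equation*}
where $w_n:=[\hists(\tilde u^{\Dt},\xi)](t_n)$ and $\tilde u^{\Dt}$ is the piecewise linear interpolant built from $u_0,\dots,u_n$. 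The unknown $u_n$ enters linearly in the elliptic form $a(t_n,\cdot,\cdot)$ and through an $O(\Dt)$-Lipschitz perturbation from $u_n\mapsto w_n$ granted by H.3, so for $\Dt$ sufficiently small the step problem is uniquely solvable by Lax--Milgram together with a Banach fixed-point iteration (coercivity of $\tfrac{1}{\Dt}(\cdot,\cdot)_H+a(t_n,\cdot,\cdot)$ on $V$ follows from H.1 as soon as $\Dt<1/\lambda$).

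Testing the discrete equation by $\Dt\,u_n$ and summing over $n=1,\dots,m\le N$, together with the identity $\langle u_n-u_{n-1},u_n\rangle_H\ge\tfrac12\|u_n\|_H^2-\tfrac12\|u_{n-1}\|_H^2$, the Gårding inequality \eqref{eq:a_coerc}, Young's inequality on $\langle f_n,u_n\rangle$, the uniform bound $\|w_n\|_H\le\int_{\PTr}p\,d\rho$ from Lemma~\ref{lemma:F_prop}, and the Abel identity
\begin{equation*}
\sum_{n=1}^m\langle w_n-w_{n-1},u_n\rangle_H=\langle w_m,u_m\rangle_H-\langle w_0,u_1\rangle_H-\sum_{n=1}^{m-1}\langle w_n,u_{n+1}-u_n\rangle_H,
\end{equation*}
discrete Gronwall yields $\max_m\|u_m\|_H^2+\sum_{n=1}^N\|u_n\|_V^2\Dt\le C$ uniformly in $N$. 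The discrete equation itself then gives $\|(u_n+w_n)-(u_{n-1}+w_{n-1})\|_{V'}\le C\Dt(\|f_n\|_{V'}+\|u_n\|_V)$, hence $\partial_t(\tilde u^{\Dt}+\tilde w^{\Dt})$ is uniformly bounded in $\rL^2(0,T;V')$.

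Denote by $\tilde u^{\Dt},\tilde w^{\Dt}$ and $\bar u^{\Dt},\bar w^{\Dt}$ the piecewise linear and piecewise constant Rothe interpolants. Extracting a subsequence, I have $\tilde u^{\Dt}\rightharpoonup u$ in $\rL^2(0,T;V)$ and weakly-$*$ in $\rL^\infty(0,T;H)$, $\tilde w^{\Dt}\stackrel{*}{\rightharpoonup} w$ in $\rL^\infty(0,T;H)$, and $\tilde u^{\Dt}+\tilde w^{\Dt}\rightharpoonup u+w$ in $\rH^1(0,T;V')$. The decisive step is strong convergence of $\tilde u^{\Dt}$ in $\rL^2(0,T;H)$: exploiting the explicit relay representation \eqref{eq:dyn_relay_int2} one bounds $\partial_t\tilde w^{\Dt}$ in $\rL^2(0,T;H)$ in terms of $\|\tilde u^{\Dt}\|_{\rL^2(0,T;H)}$, which is already controlled, so that $\partial_t\tilde u^{\Dt}=\partial_t(\tilde u^{\Dt}+\tilde w^{\Dt})-\partial_t\tilde w^{\Dt}$ is bounded in $\rL^2(0,T;V')$. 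Aubin--Lions with $V\hookrightarrow\hookrightarrow H\hookrightarrow V'$ then delivers $\tilde u^{\Dt}\to u$ strongly in $\rL^2(0,T;H)$; H.3 transports this through the memory operator, $\tilde w^{\Dt}=\hists(\tilde u^{\Dt},\xi)\to\hists(u,\xi)$ in $C([0,T];H)$, so $w=\hists(u,\xi)$. The linear terms pass to the limit by standard continuity, and the initial condition $(u+w)(0)=u_0+w_0$ is inherited from continuity at $t=0$ of the strongly convergent sum $\tilde u^{\Dt}+\tilde w^{\Dt}$.

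The main obstacle is precisely the identification $w=\hists(u,\xi)$: because $\hists$ is non-local in time, weak convergence of $\tilde u^{\Dt}$ alone is insufficient. Since the parabolic equation only controls $\partial_t(\tilde u^{\Dt}+\tilde w^{\Dt})$ and not $\partial_t\tilde u^{\Dt}$ individually, the crux is to promote to strong convergence of $\tilde u^{\Dt}$ in $\rL^2(0,T;H)$. I expect this to rest on the structural rate-dependent bounds developed in Sections~\ref{sec:relays}--\ref{DPM} (the $\rH^1$-in-time regularity of the dynamic relay established through \eqref{eq:relay}--\eqref{eq:dyn_relay_int2}), rather than on H.3 alone; once strong compactness of $\tilde u^{\Dt}$ is in hand, H.3 does the remaining work of closing the nonlinear limit.
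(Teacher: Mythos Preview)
Your overall Rothe strategy matches the paper's, but the a~priori estimate as you wrote it does not close. After the Abel resummation you are left with $\sum_{n=1}^{m-1}\langle w_n,u_{n+1}-u_n\rangle_H$, and the only information you invoke at that point is the uniform bound $\|w_n\|_H\le\int_{\PTr}p$. This gives at best $C\sum_{n}\|u_{n+1}-u_n\|_H$, a sum of $O(N)$ terms for which you have no control yet; it is not of the form $\sum_n\Dt\,(\cdots)$ and cannot be absorbed by discrete Gronwall. The estimate $\max_m\|u_m\|_H^2+\sum_n\Dt\|u_n\|_V^2\le C$ is therefore unproved.

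The repair is simply to move forward the bound you yourself derive later from the relay structure: since $w_n-w_{n-1}=\int_{t_{n-1}}^{t_n}\partial_t[\hists(\tilde u^{\Dt},\xi)](s)\,ds$ and $|\partial_t\eta_\rho(u)|\le k(|u|+\rho_0)$ pointwise, one has $\|w_n-w_{n-1}\|_H\le C\Dt(\|u_{n-1}\|_H+\|u_n\|_H+1)$, hence $|\langle w_n-w_{n-1},u_n\rangle_H|\le C\Dt(\|u_n\|_H^2+\|u_{n-1}\|_H^2+1)$, and discrete Gronwall closes without any Abel trick. With this fix your compactness step (bound $\partial_t\tilde w^{\Dt}$ in $\rL^2(0,T;H)$, deduce $\partial_t\tilde u^{\Dt}$ bounded in $\rL^2(0,T;V')$, Aubin--Lions, then H.3) is correct.

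For comparison, the paper does \emph{not} test with $u_n$ but with the increment $u^n-u^{n-1}$. The crucial term $\big(\tfrac{w^n-w^{n-1}}{\Dt},u^n-u^{n-1}\big)_H$ is handled by splitting $w^n-w^{n-1}=\big(F^n(u^n)-F^n(u^{n-1})\big)+\big(F^n(u^{n-1})-w^{n-1}\big)$: the first piece is nonnegative by the order-preservation of $\hists$ (this is where Lemma~\ref{lemma:eta_order} enters), and the second is $O(\Dt(|u^{n-1}|+1))$ by the same relay rate bound. This yields directly $\sum_n\Dt\|\tfrac{u^n-u^{n-1}}{\Dt}\|_H^2+\|u^l\|_V^2\le C$, i.e.\ $u_{\Dt}$ bounded in $\rH^1(0,T;H)\cap\rL^\infty(0,T;V)$, and compactness of this space in $\rL^2(0,T;H)$ (no separate bound on $\partial_t\tilde w^{\Dt}$ is needed). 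So the paper uses order preservation in an essential way and obtains a stronger bound; your corrected route bypasses order preservation at the price of the weaker Aubin--Lions triple $\rL^2(0,T;V)\cap\{\partial_t\in\rL^2(0,T;V')\}$, which is still enough to identify $w=\hists(u,\xi)$ via H.3.
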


\begin{proof} Let us fix $m\in\N$ and set $\Dt:=T/m$. Now, for
$n=1,\ldots,m$, we define $t^n:=n\Dt$.
The time discretization of problem~\eqref{VF:main} based on backward Euler's
scheme reads as follows: Given $u^0=u_0$ and $w^0=w_0$ in $\Omega$,
find $u^n\in V$ and $w^n\in H$, $n=1,\ldots,m$, satisfying
\begin{align}\label{eq:disc_pb}
\left(u^n+w^n,v\right)_{H,H}+ \Dt a(t^n,u^n,v)
&=\Dt \langle f^n,v\rangle_{ V , V '}+(u^{n-1}+w^{n-1},v),\\
w^n&= [\hists(u_{\Dt^n},\xin)](t^n)\qquad\quad\mbox{in  } \Omega,
\label{eq:disc_pb2}
\end{align}
for all $v \in V$, where  $u_{\Dt^n}$ is the piecewise linear in time
interpolant of $\{u^i\}_{i=0}^n$. In order to study the time-discrete problem
 we introduce an operator $\FndD^n:H \longrightarrow H$ as follows: 
\[
F^n(w):=[\hists(\Lambda^n(w),\xin)](t^n)\quad \forall w\in H,
\]
where $\Lambda^n: H \rightarrow \mathrm{H}^1(0,t^n;H)$ is defined as follows:
for $w\in H$, $\Lambda^n(w)$ is the continuous piecewise linear function in time
such that $\Lambda^n(w)(t^i)=u^i,\ i=0,\ldots,n-1,$ and $\Lambda^n(w)(t^n)=w$.
From Lemma~\ref{lemma:F_prop} it follows
that $F^n$ is Lipschitz continuous in $H$, uniformly bounded and,
from the order preservation property, we have $ \forall w_1,w_2\in H$
 \begin{equation}\label{eq:mon_F}
\left(F^n(w_1)-F^n(w_2)\right)(w_1-w_2)\geq 0 \qquad \mbox{a.e. in } \Omega.
 \end{equation}
Thus, from H.1  it follows that \eqref{eq:disc_pb} has a unique solution (see, for instance, \cite{R05}).
The next step is to prove an a priori estimate for the solution of \eqref{eq:disc_pb}.
 Let us apply \eqref{eq:disc_pb} to
$v=u^n-u^{n-1}$. For $n=1,\ldots,m$ we obtain
\begin{align}\label{h_n_eqd}
\Dt \left\|\dfrac{u^n-u^{n-1}}{{\Dt}}\right\|_{H}^2
+\left(\dfrac{w^n-w^{n-1}}{\Dt},u^n-u^{n-1}\right)_{H,H}+
a(t^n,u^n,u^n-u^{n-1})\nonumber\\
=\langle f^n,u^n-u^{n-1}\rangle_{ V , V '}.
\end{align}
It is well known that, when the classical Preisach model is considered,
the second term on the left hand side of the previous equality is positive.
 This is a consequence of the order preservation
property and the fact that $w^{n-1}=F^{n}(u^{n-1})$  in the rate-independent setting. However, this does
not hold true for  the dynamic Preisach operator. In order to estimate this
term, from \eqref{eq:mon_F} we first notice that, a.e. in $\Omega$,
\begin{align}
  (w^n-w^{n-1})(u^n-u^{n-1})&=(F^n(u^{n})-F^n(u^{n-1}))(u^n-u^{n-1})\nonumber\\
  & -(w^{n-1}-F^n(u^{n-1}))(u^n-u^{n-1})  \nonumber \\
  &\geq - ([\hists(u_{\Dt^{n}},\xin)](t^{n-1})-F^n(u^{n-1})) (u^n-u^{n-1})   \nonumber\\
  &=  ([\hists(\widetilde{u}_{\Dt^{n}},\xin)](t^{n})
  -[\hists(\widetilde{u}_{\Dt^{n}},\xin)](t^{n-1})) (u^n-u^{n-1})
  \label{eq:F_diff1}
\end{align}
where  $\widetilde{u}_{\Dt^n}$ is the continuous piecewise linear in
time function such that $\widetilde{u}_{\Dt^n}(t^i)=u^i$, $i=0,\ldots,n-1$ and
$\widetilde{u}_{\Dt^n}(t^n)=u^{n-1}$ a.e. in $\Omega$. Moreover,
from \eqref{h_Fmu}  and \eqref{eq:space_hyst} it follows that, a.e. in $\Omega$,
\begin{multline}\label{eq:F_diff2}
[\hists(\widetilde{u}_{\Dt^n},\xin)](t^n)-
[\hists(\widetilde{u}_{\Dt^n},\xin)](t^{n-1})=\int_{t^{n-1}}^{t^n}\partial_t[\hists(\widetilde{u}_{\Dt^n},\xin)](s) ds\\
=\int_{t^{n-1}}^{t^n}\int_{\PTr}\partial_t\dhvp(\widetilde{u}_{\Dt^n},\xin)(s) p(\rho) d\rho\, ds
\leq C\Dt\left(|u^{n-1}|+1\right)
\end{multline}
where latter inequality follows from \eqref{eq:carq1} and the fact that
$\widetilde{u}_{\Dt^n}=u^{n-1}$ in $[t^{n-1},t^n]$. Here $C$ depends on $k$ but it is independent of $\Dt$.
On the
other hand, in order to estimate the last term on the left-hand side of
 \eqref{h_n_eqd} we use the identity $2(p-q)p= p^2+(p-q)^2-q^2$
 and the Lipschitz continuity of
 $a(\cdot,v,w):(0,T)\to \R, \forall v,w\in V$ to
obtain that
\begin{align}\label{eq:a_est}
 2a(t^n, u^n,u^n-u^{n-1})\geq&
   a(t^n, u^n,u^n)-
 a(t^{n},u^{n-1},u^{n-1})   \nonumber
\\
\geq &
  a(t^n, u^n,u^n)-
 a(t^{n-1},u^{n-1},u^{n-1})
-C\Dt\norm{u^{n-1}}{ V }^2.
\end{align}
Summing up \eqref{h_n_eqd} for $n=1,\ldots,l$ with $l\in \{1,\ldots,m\}$,
 from \eqref{eq:F_diff1}--\eqref{eq:a_est} we obtain
\begin{multline*}
\sum_{n=1}^{l}\Dt \norm{\dfrac{u^{n}-u^{n-1}}{\Dt}}{H}^2
+\dfrac{1}{2}a(t^l,u^l,u^l)\\
\leq \dfrac{1}{2}a(t^0,u_0,u_0)+\sum_{n=1}^{l}C
\Dt\norm{u^{n-1}}{ V }^2
+\sum_{n=1}^{l}\langle f^n,u^n-u^{n-1}\rangle_{ V , V '}+C.
\end{multline*}
%
%
%
%
By proceeding as in \cite{BGRV2014}  it follows that, for $l=1,\ldots,m$
\begin{equation}\label{eq:apriori}
\Dt\sum_{n=1}^{l}\norm{\dfrac{w^{n}-w^{n-1}}{\Dt}}{H}^2+\Dt\sum_{n=1}^{l}\norm{\dfrac{u^{n}-u^{n-1}}{\Dt}}{H}^2
+\norm{u^l}{ V }^2\leq C,\qquad
\end{equation}
%

Finally, let us define $u_{\Dt}:[0,T]\to V $, $w_{\Dt}:[0,T]\to H $ as
the continuous piecewise linear in time interpolants of $\{w^n\}_{n=0}^m$ and
$\{u^n\}_{n=0}^m$, respectively. We also introduce the
step function $\overline{u}_{\Dt}:[0,T]\to V $ by
\begin{align*}
\overline{u}_{\Dt}(t^0):=u_0;\quad \overline{u}_{\Dt}(t):=u^n, \quad t\in (t^{n-1},t^n],\quad
i=n,\ldots,m,
\end{align*}
and define the step functions  $\overline{a}(t)$ and $\overline{f}$ in a similar way.

Using the above notation we rewrite equation \eqref{eq:disc_pb} as follows:
\begin{eqnarray}\label{h_pp}
\left(\partial_t u_{\Dt}+\partial_t w_{\Dt},v\right)_{H,H}+ \overline{a}(t,\overline{u}_{\Dt},v)
=\langle \overline{f},v\rangle_{V,V'}\quad\forall v \in  V,\,\,\mbox{a.e. in  } (0,T].
\end{eqnarray}
{}From \eqref{eq:apriori} we deduce that there exists $C>0$ such that
\begin{align*}
\norm{w_{\Dt}}{\rL^{\infty}(0,T; H)}+\norm{\dfrac{\partial w_{\Dt}}{
\partial t}}{\rL^{2}(0,T; V ')}
+\norm{u_{\Dt}}{\rH^1(0,T; H )\cap \rL^\infty(0,T; V )}+
\norm{\overline{u}_{\Dt}}{\rL^\infty(0,T; V )}\leq C.
\end{align*}
Thus, there exists $u,w$ such that $w_{\Dt}\to w$ and $u_{\Dt}\to u$ weakly in the corresponding spaces.
Passing to the limit  in \eqref{h_pp} we obtain
\begin{align*}
\langle\partial_t u+\partial_t w,v\rangle_{V,V'}+ a(t,u,v)
=\langle f,v\rangle_{V,V'}\qquad\quad\forall v \in  V ,\quad\mbox{a.e. in  } (0,T].
\end{align*}
The next step is to prove that $w=\hists(u,\xin)$. This equality follows from the
compact embedding of $\rH^1(0,T; H )\cap \rL^2(0,T; V )$ in $\rL^2(0,T; H )$  (see, for instance, \cite[Th.~51]{JLL})
and the Lipschitz continuity of $\hists$ (see H.3).\qed
\end{proof}

\begin{remark} The previous result applies, for instance, to the following parabolic equation:
\begin{align*}
\left\langle\partial_t u+\partial_t w, v
\right\rangle_{ V , V '}+ (\nabla u,\nabla v)_{ H , H }
&=\langle f,v\rangle_{ V , V '}\quad \forall v \in  V ,\quad\mbox{a.e. in  } (0,T],\\
w&= \hists(u,\xi) \qquad \mbox{in  } \Omega\times[0,T],\\
(u+w)(0)&=u_0+w_0 \,\,\,\,\quad \mbox{in  } \Omega,
\end{align*}
where $ V =\rH_0^1(\Omega)$ and $ H =\rL^2(\Omega)$ (see Chapter IX in \cite{Vi94} for the rate-independent case).

It can also be applied to the axisymmetric eddy current model
\begin{align*}
\left\langle \partial_t w,rv\right\rangle_{ V , V '}\!\! + \int_\Omega\dfrac{1}{\sigma
r}\left(\dfrac{\partial(ru)}{\partial r}\dfrac{\partial
(rv)}{\partial r}+\dfrac{\partial(ru)}{\partial z}\dfrac{\partial
(rv)}{\partial r}\right)\drdz
&= b'(t) \left(rv\right)|_{\Gamma}\quad
\forall v \in  V,\\
w&= u+\hists(u,\xin)\quad \mbox{in }
\Om\times(0,T),\\
w(0)&= w_0 \quad \mbox{in  }\Om.
\end{align*}
where  $ V =\big\{v\in \rL^2_r(\Omega): \pdr(rv)\in \rL^2_{1/r}(\Om),\pdz v\in
\LrO \mbox{ and } rv|_{\Gamma} \mbox{ is constant}\big\}$ and $ H =\rL^2_r(\Omega)$.
Here $u$ represents the magnetic field,  $w$ is the magnetic induction, $b$ is
the magnetic flux and $\sigma$ is the electrical conductivity (see \cite{VKDM}).
\end{remark}


\section{Numerical approximation and examples}\label{Num}

The aim of this section is twofold: first, to introduce and analyze the
convergence properties of a numerical scheme to approximate a partial differential equation (PDE) with hysteresis.
For this purpose, we apply the numerical approximation to a test problem where several successively refined
meshes and time-steps have been considered. Second, to illustrate the behavior of the
numerical solution for different configurations of the dynamic Preisach model.

With this end, let us consider the following weak formulation:
find $u \in\rH^1(0,T;\rL^2(\Omega))\cap\rL^{\infty}(0,T;\rH^1(\Omega))$
and $w\in \rL^2(0,T; \rL^2(\Omega) )$ with $\partial_t w\in \rL^2(0,T;\rH^1(\Omega)')$, such that
\begin{subequations}\label{VF:num}
\begin{align}
\label{VF:num_a}
\left\langle \partial_t  u+\partial_t w, v
\right\rangle_{\rH^1(\Omega),\rH^1(\Omega)'}+ \sigma^{-1}(\nabla u,\nabla v)
&=0\quad\qquad\quad\forall v \in \rH_0^1(\Omega),\\
\label{VF:num_b}
w&= \hists(u,\xi)\quad\mbox{ in  } \Omega\times[0,T],\\
\label{VF:num_c}
u&= g \qquad\qquad\mbox{on  } \partial \Omega\times[0,T],\\
\label{VF:num_d}
(u+w)(0)&=0\qquad\qquad\mbox{in  } \Omega,
\end{align}
\end{subequations}
where $\Omega \in \R^2$, $\sigma > 0 $ and $g\in \rH^1(0,T;\rH^{1/2}(\Gamma))$.
This problem arises, for instance, in the computation of
2D electromagnetic field in a cross-section of laminated media  (see \cite{VKDM}).
This field is important for the evaluation of the electromagnetic losses.

Notice that this problem does not lie exactly in the same framework as the previous one  because of Dirichlet,  instead of Neumann, boundary condition;
nevertheless, the existence of solution can be proven with the same techniques as those in \cite{BGRV2014}.

Next, we introduce a fully discrete approximation of problem~\eqref{VF:num}.
From now on we will assume that $\Omega$ is a convex polygon. We associate a family of partitions $\{\mathcal{T}_h \}_{h > 0}$ of
$\Om$ into triangles, where $h$ denotes the mesh size. Let $\calV_h$ be
the space of continuous piecewise linear finite elements.
We also consider the finite-dimensional space $\calV^0_h:=\calV_h\cap \rH_0^1(\Omega)$
and denote by $\calV_h(\Gamma)$ the space of traces
on $\Gamma$ of functions in $\calV_h$. We introduce a uniform partition
$\{t^i := i\Delta t, i = 0, \ldots , m\}$ of $[0, T]$, with time step $t := T/m, m \in \mathbb{N}$.
By using the above finite element space
for space discretization and the backward Euler
scheme  for time discretization, we are led to the following Galerkin
approximation  of problem~\eqref{VF:num}:
Given $u_h^0=w_h^0=0$ in $\Omega$,
find $u_h^n\in \calV_h$ and $w_h^n\in \calV_h$, $n=1,\ldots,m$, satisfying
\begin{subequations}\label{VF:num_dis}
\begin{align}
\left( u_h^n+w_h^n,v_h\right)+ \Dt \sigma^{-1}(\nabla u_h^n,\nabla v_h)
&=(u_h^{n-1}+w_h^{n-1},v_h)\quad \forall v_h \in \calV^0_h,\label{VFd_1}\\
w_h^n&= [\hists(u^h_{\Dt^n},\xin)](t^n)\quad\mbox{in  } \Omega,\label{VFd_2}\\
u_h^{n}&=g^n_h\quad \mbox{in  } \Gamma,\label{VFd_3}
\end{align}
\end{subequations}
where $g^n_h\in\calV_h(\Gamma)$ is a convenient approximation of
$g(t^n), n=1, \ldots , m$  and $u^h_{\Dt^n}$ is the piecewise linear in time
interpolant of $\{u_h^i\}_{i=0}^n$.

At each time step of the above algorithm, we must solve a non-linear problem.
With this purpose, and given the history dependence of the nonlinear operator,
we have considered a Newton-like method.
To complete the proposed numerical scheme, a particular hysteresis
operator must be considered (cf. \eqref{VFd_2}). In view of applications we have considered the
dynamic Preisach model described in Section~\ref{DPM} characterized by the
Factorized-Lorentzian distribution \eqref{facLor} (see Figure~\ref{fig:triangle_p} (right))
and different values of slopes $k$ (cf. \eqref{eq:carq1}).

\subsection{Convergence analysis}
Given the difficulties related with the numerical analysis of the problem, we
estimate experimentally the order
of convergence of the scheme presented in the previous section.
With this aim, we have solved
 problem \eqref{VF:num_dis} in a square domain $\Omega = [0, 0.02]^2$ along the
time interval $[0, 0.01]$, with $\sigma=100$,  non-homogeneous Dirichlet boundary condition
$g(x, y, t) = 200 \sin(2 \pi t/0.01)$ and the dynamic Preisach model with slope $k=10$.

Since there is no analytical solution to this problem, we asses the performance of the
method by comparing the computed results with those obtained with a very fine uniform
mesh of size $h_0/15$ and time step $\Delta t_0/512$. The solution to this problem is
taken as the ``exact'' solution $u$.
The method has been used on several successively refined meshes
chosen in a convenient way in order to analyze convergence.  We denote by $h_0=0.0033$ the corresponding
mesh size and we have taken as coarser time step $\Delta t_0 =0.01$. The rest of the meshes
are uniform refinements of this one. The numerical approximations are compared with the
``exact'' solution by computing the percentage error for $u$
in both a discrete $\rL^2(0, T ; \rL^2(\Om))$-norm and
a discrete $\rL^2(0, T ; \rH^1(\Om))$-semi norm, respectively.

Figure~\ref{fig:error_bh} (left) shows the percentage error for $u$ versus the mesh-size $h$ for a fixed time-step.
We observe a linear order of convergence in norm $\rL^2(0, T ; \rH^1(\Om))$  and a
quadratic order in $\rL^2(0, T ; \rL^2(\Om))$. A similar behavior is also observed when
the rate-independent Preisach model is considered (see \cite{BDGV2017}).
\begin{figure}
\centering
\includegraphics*[width=0.5\textwidth]{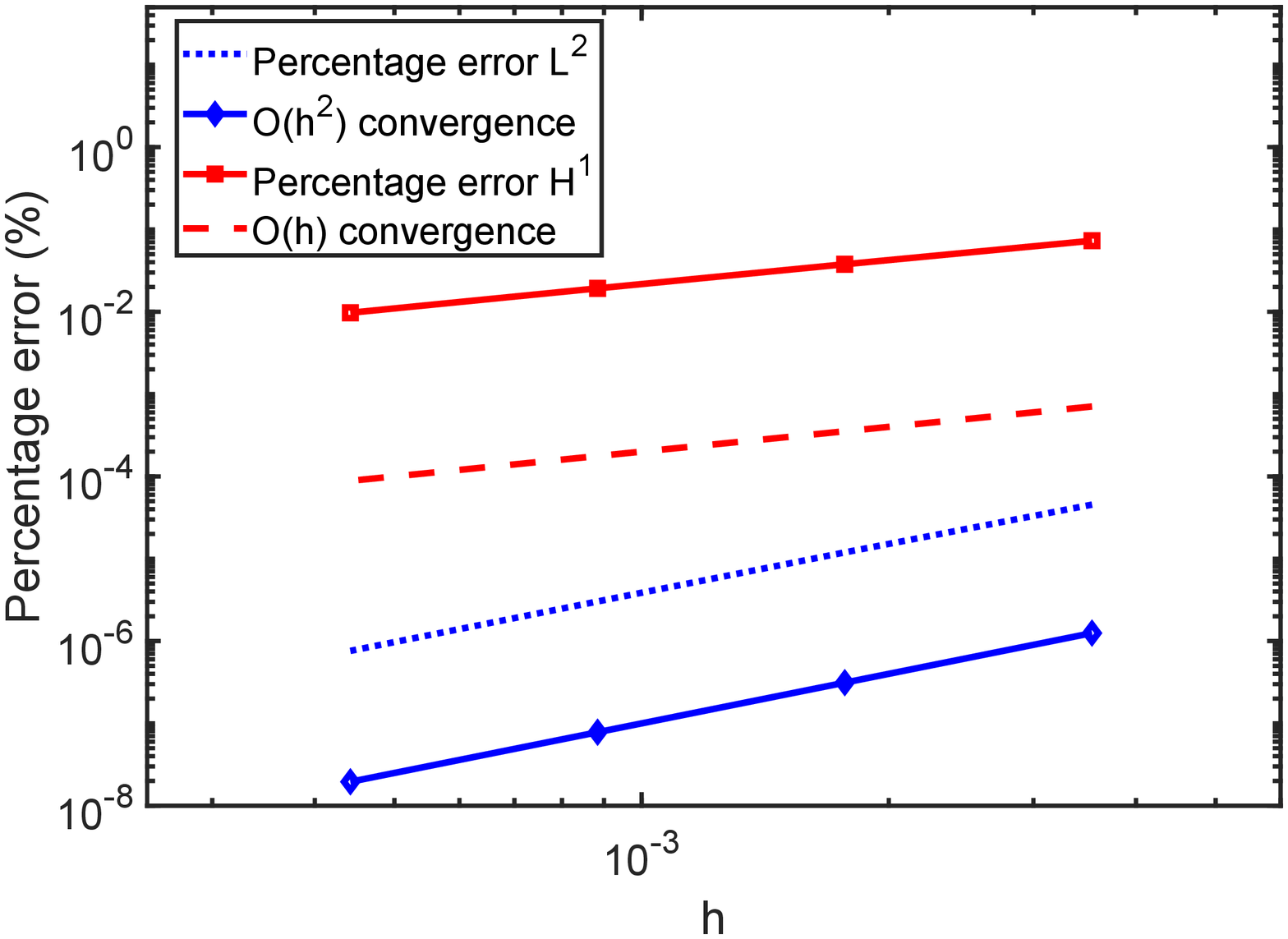}
\includegraphics*[width=0.49\textwidth]{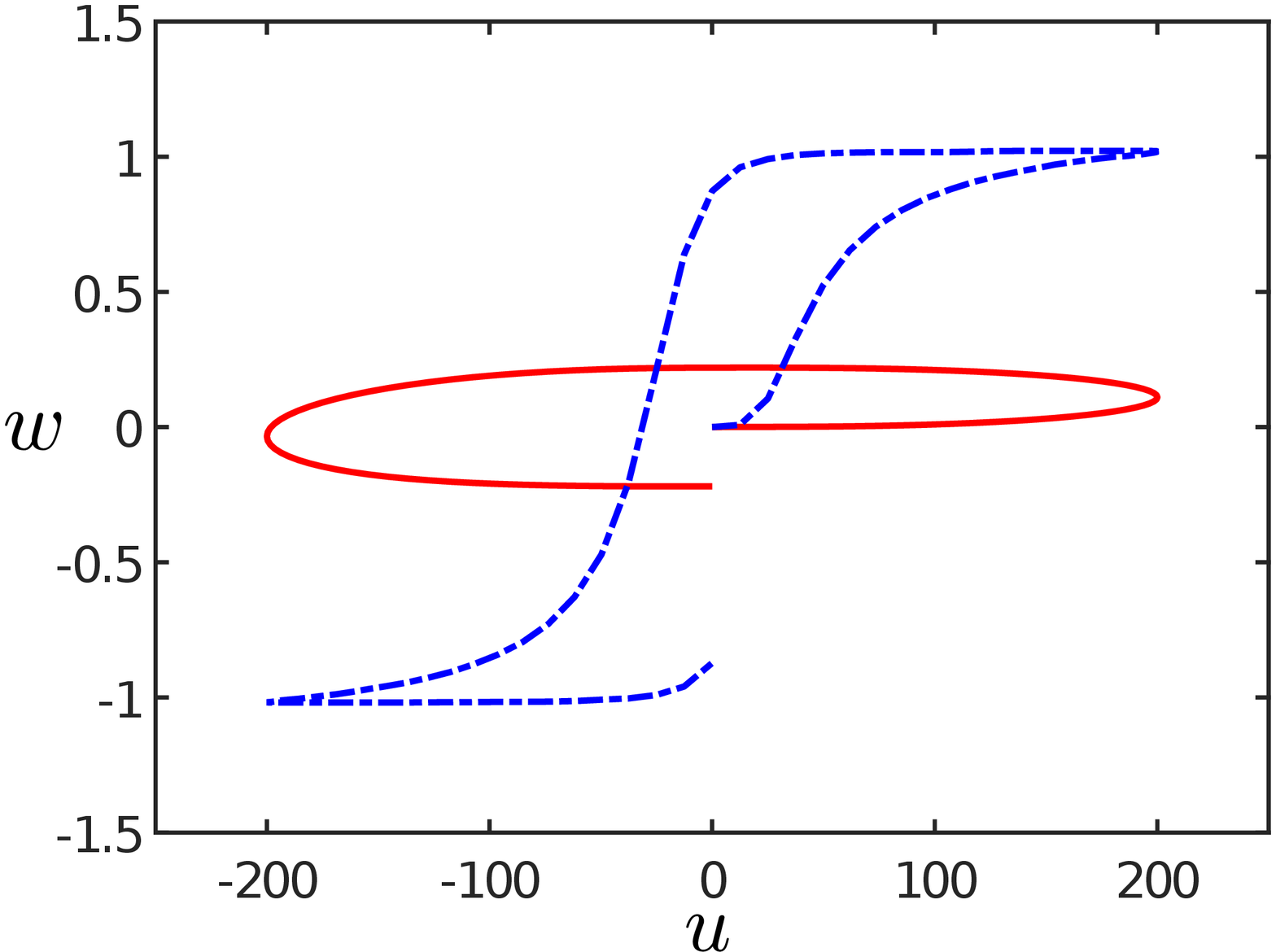}
\caption{Left.  Percentage errors in norms  and  versus the mesh-size $h$ for a fixed time-step $\Delta t/128$ (log-log scale).
Right. Curve $w-u$ on a boundary point for $k=10^{3}$ (dashed line) and $k=0$ (solid line). }
\label{fig:error_bh}
\end{figure}

\subsection{Numerical solution for different k-values}

In this section we illustrate the behavior of the numerical solution to problem \eqref{VF:num_dis}
for different configurations of the dynamic Preisach model. As we notice in Section~\ref{DPM},
the evolution of the dynamic relay and, accordingly, the dynamic Preisach model, varies with respect
to the velocity of the input and the relay slope $k$ (cf.~Figures~\ref{fig:ex_1},
\ref{fig:ex_2} and \ref{fig:BH_curve}). We have computed the solution of problem \eqref{VF:num_dis}
 where we have only changed the slope $k$ of the dynamic Preisach model. For the considered examples
 $k=1$ and $k=10^3$. Figures \ref{fig:u_field} and \ref{fig:w_field} show  fields $u$ and $w$ solution
to problem~\eqref{VF:num_dis}. From Figure~\ref{fig:w_field} (bottom) it can be seen that changes in $w$
are smaller when $k=1$. This behavior is expected as  the size of the $u-w$ cycle decreases
 when the slope, k,  decreases (see Figure~\ref{fig:ex_1}). This is not the case
for the $w$ field for $k=10^3$: it reaches values close to  saturation
(see Figures~\ref{fig:w_field} (top) and \ref{fig:error_bh} (left)).

    \begin{figure}
   \centering
   \includegraphics*[width=0.3\textwidth]{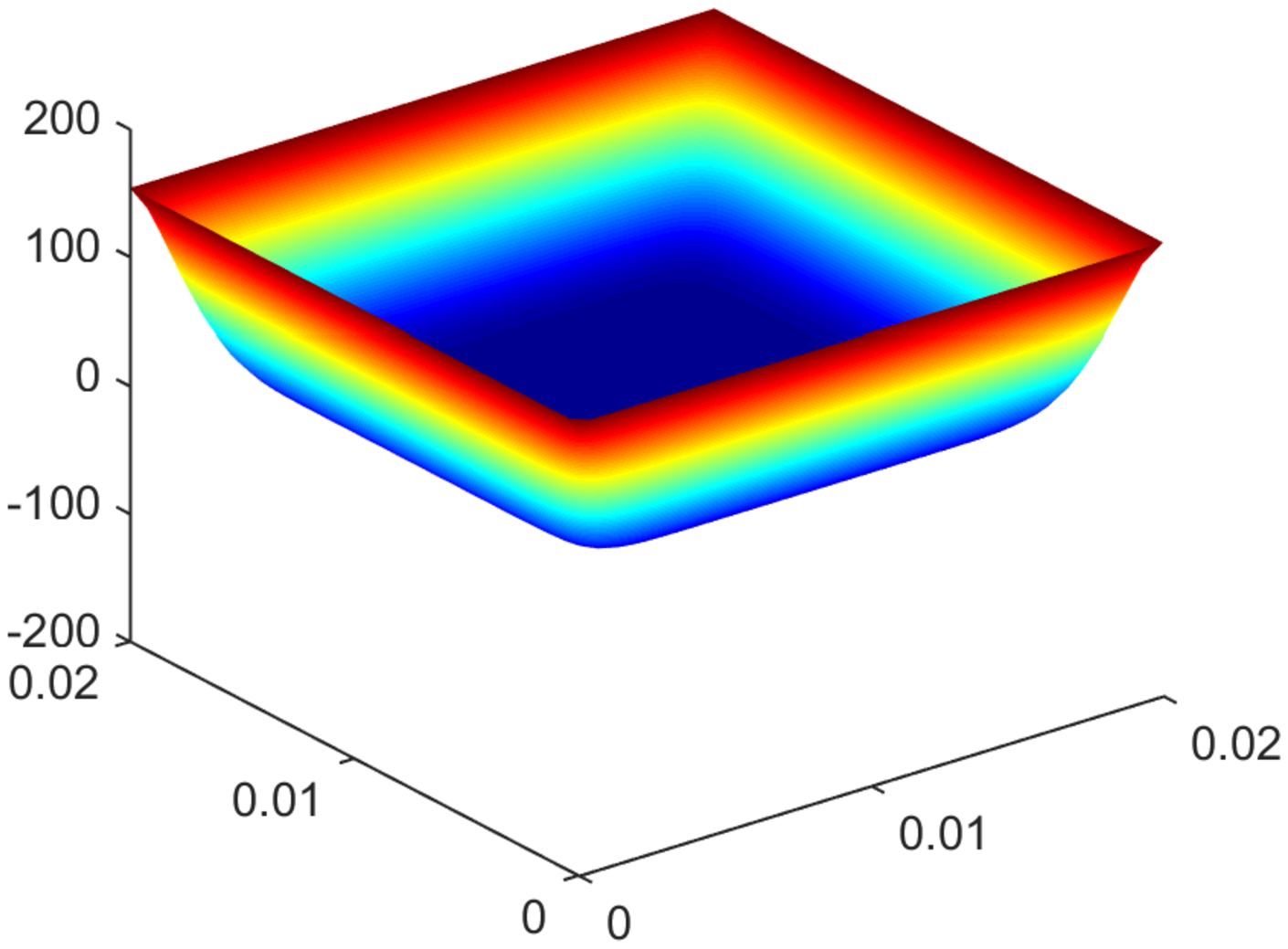}
    \includegraphics*[width=0.3\textwidth]{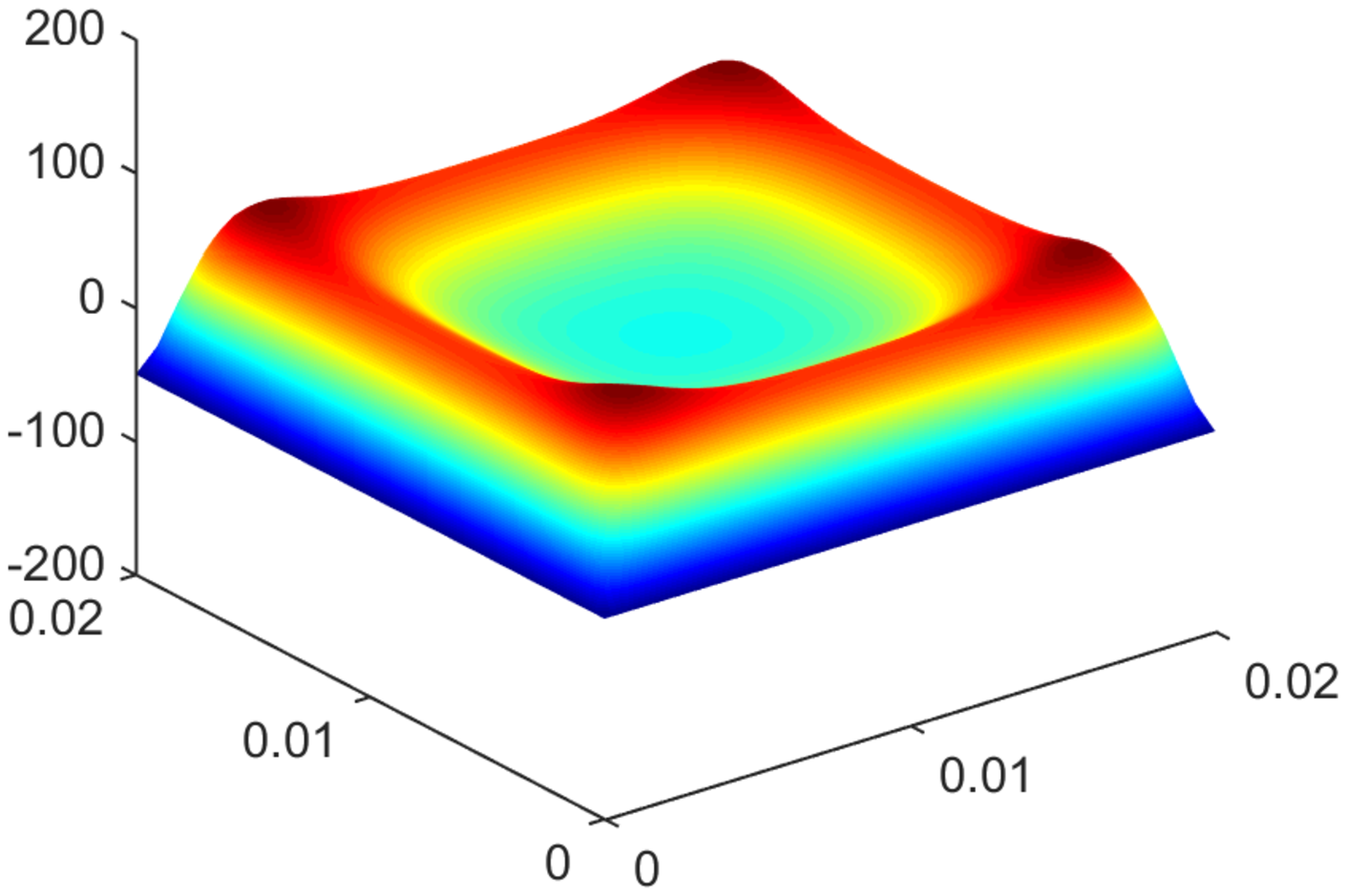}
    \includegraphics*[width=0.3\textwidth]{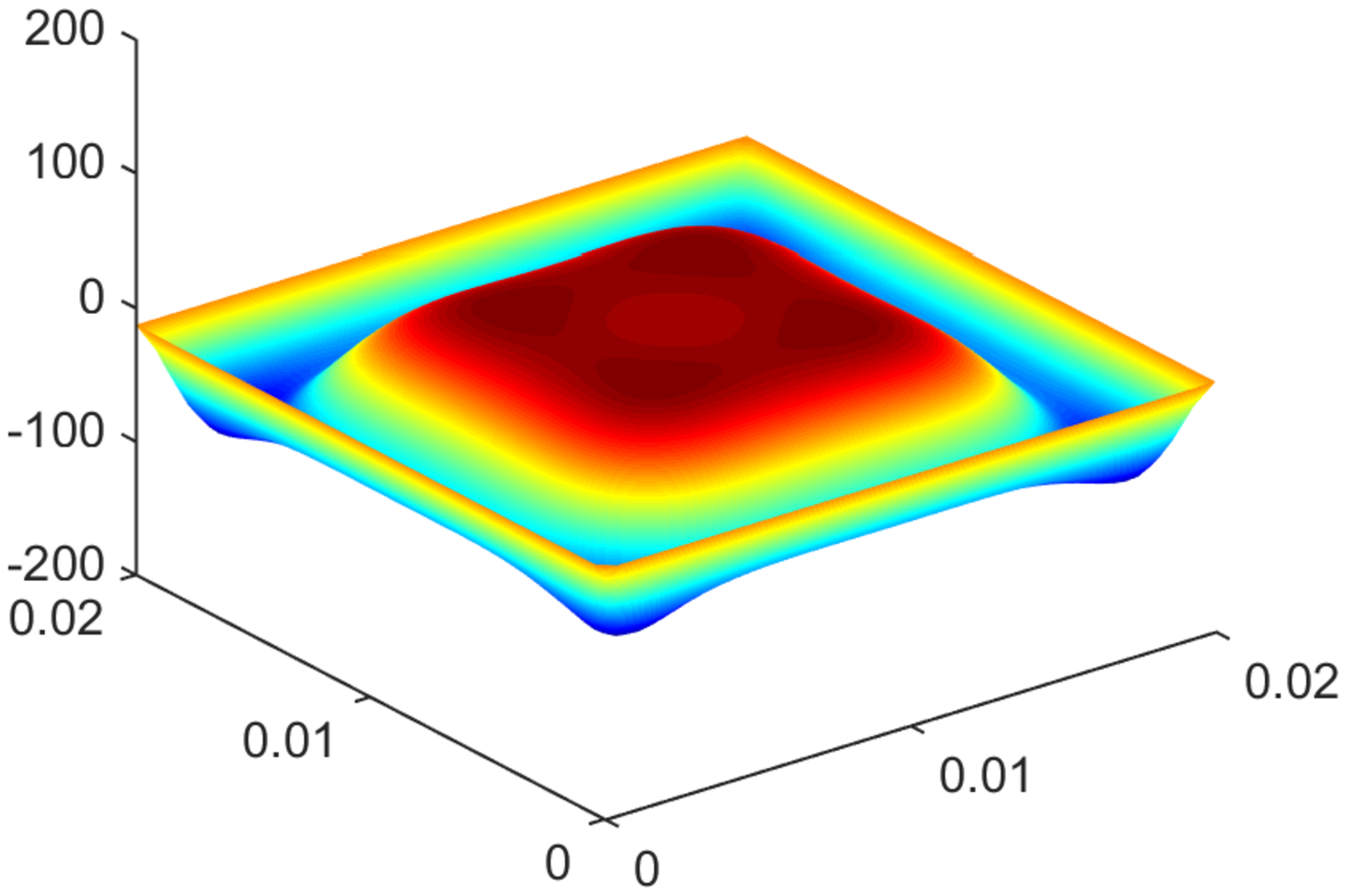}\\
    \includegraphics*[width=0.3\textwidth]{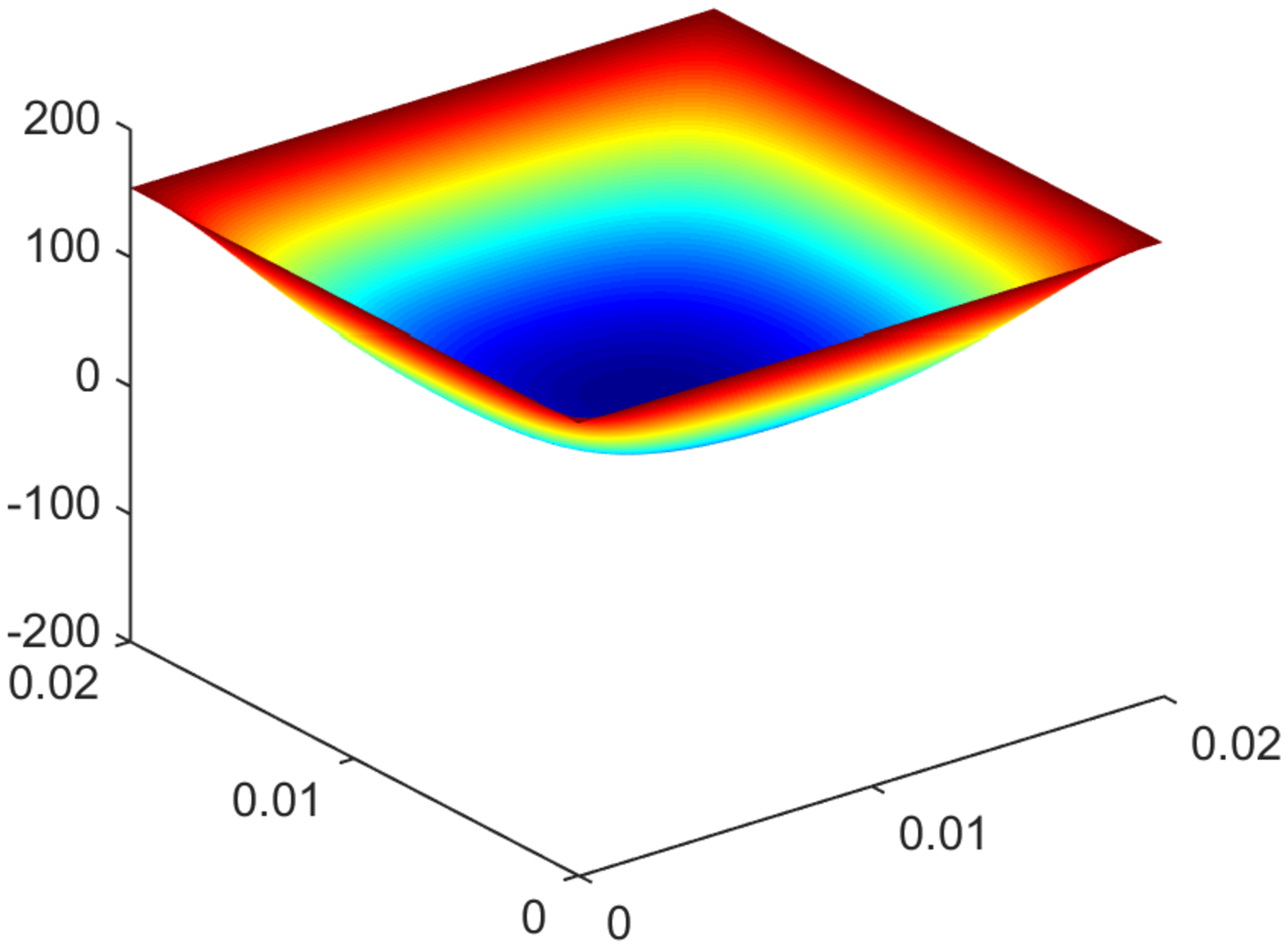}
    \includegraphics*[width=0.3\textwidth]{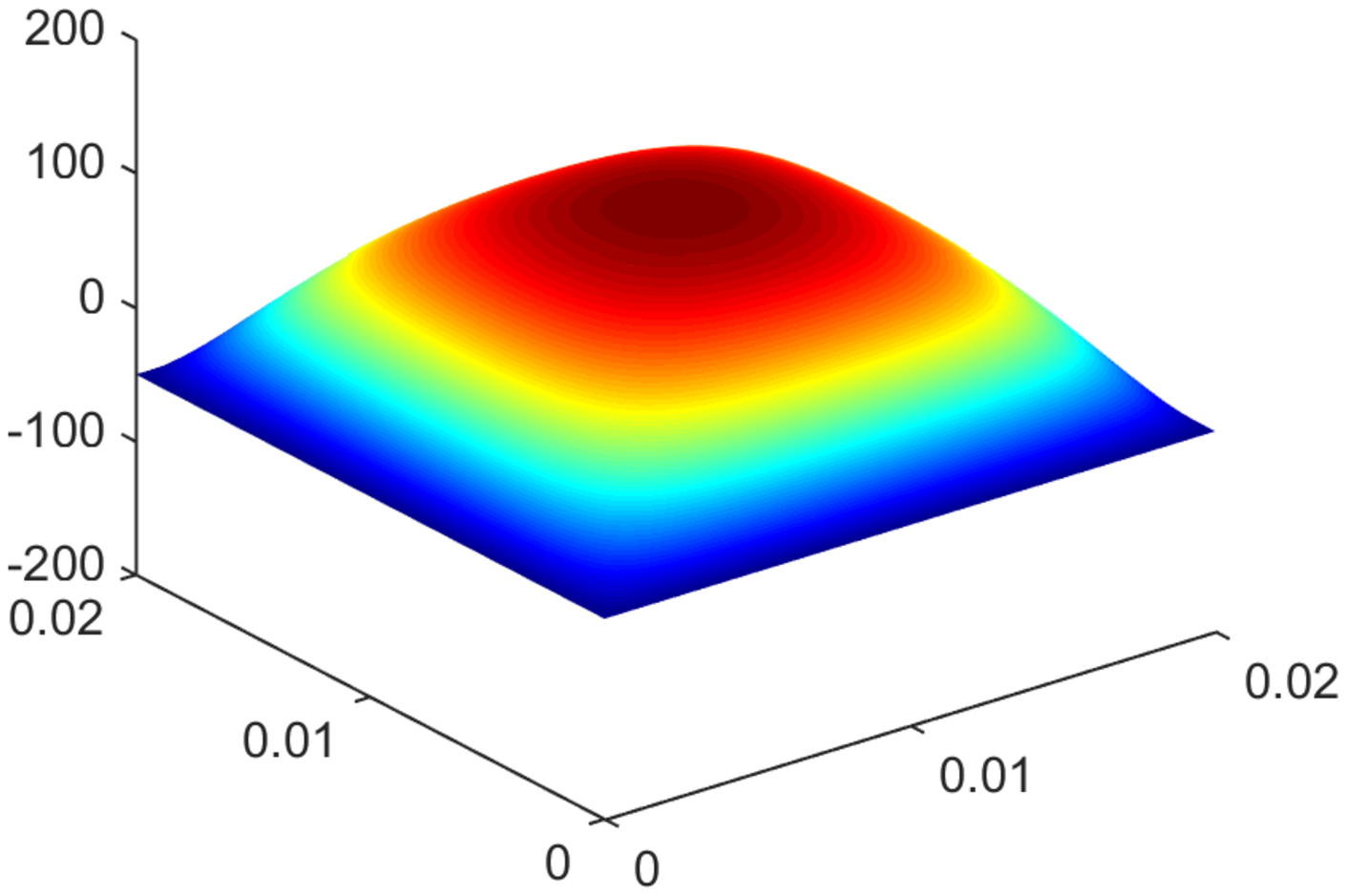}
    \includegraphics*[width=0.3\textwidth]{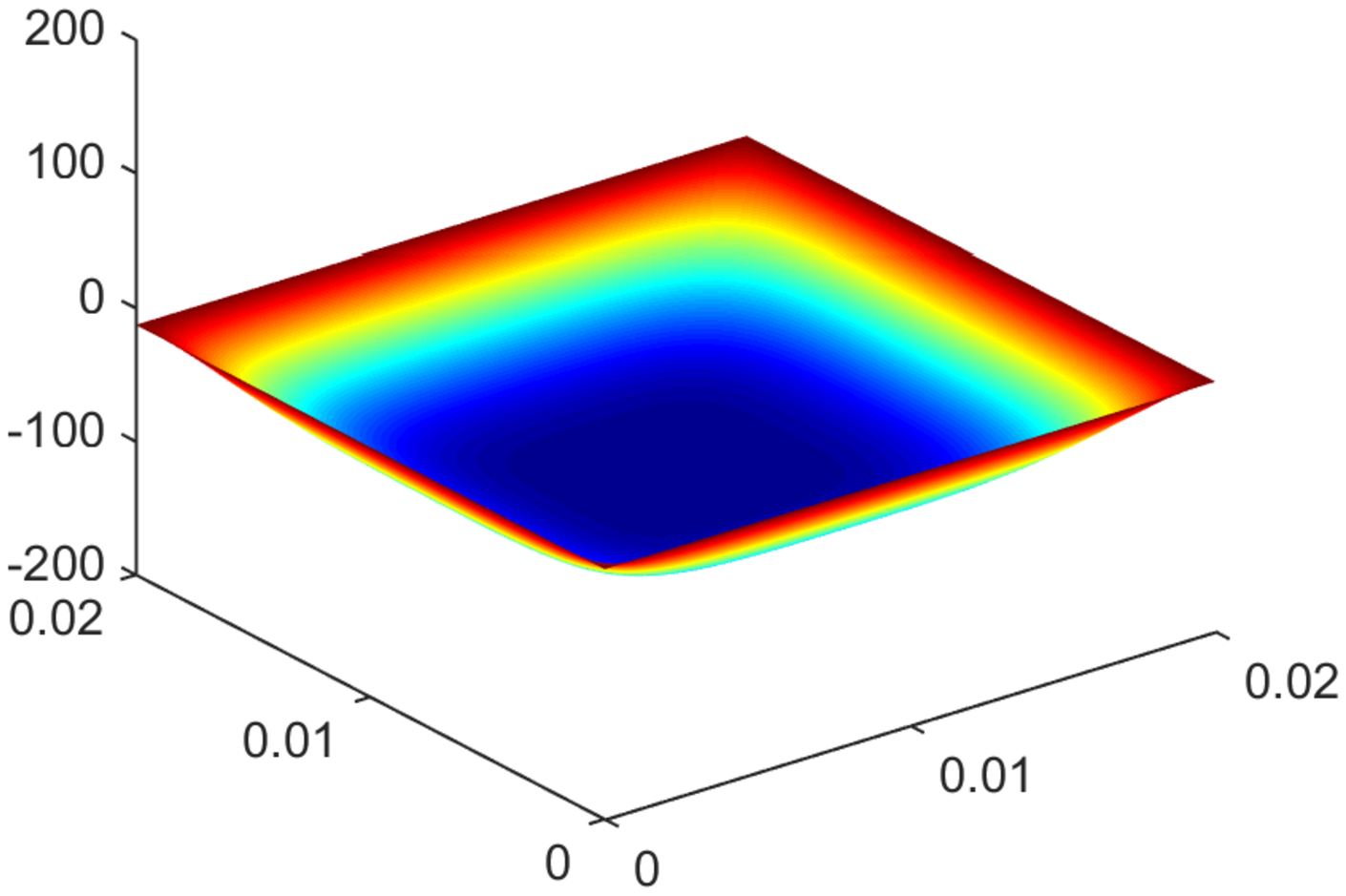}\\
    $t=0.002$s\hspace{0.2\textwidth} $t=0.0055$s \hspace{0.2\textwidth}$t=0.01$s
    \caption{$u$-field solution to problem~\eqref{VF:num_dis} for $k=10^3$ (top) and $k=1$ (bottom).}
    \label{fig:u_field}
    \end{figure}
%
\begin{figure}
    \centering
    \includegraphics*[width=0.3\textwidth]{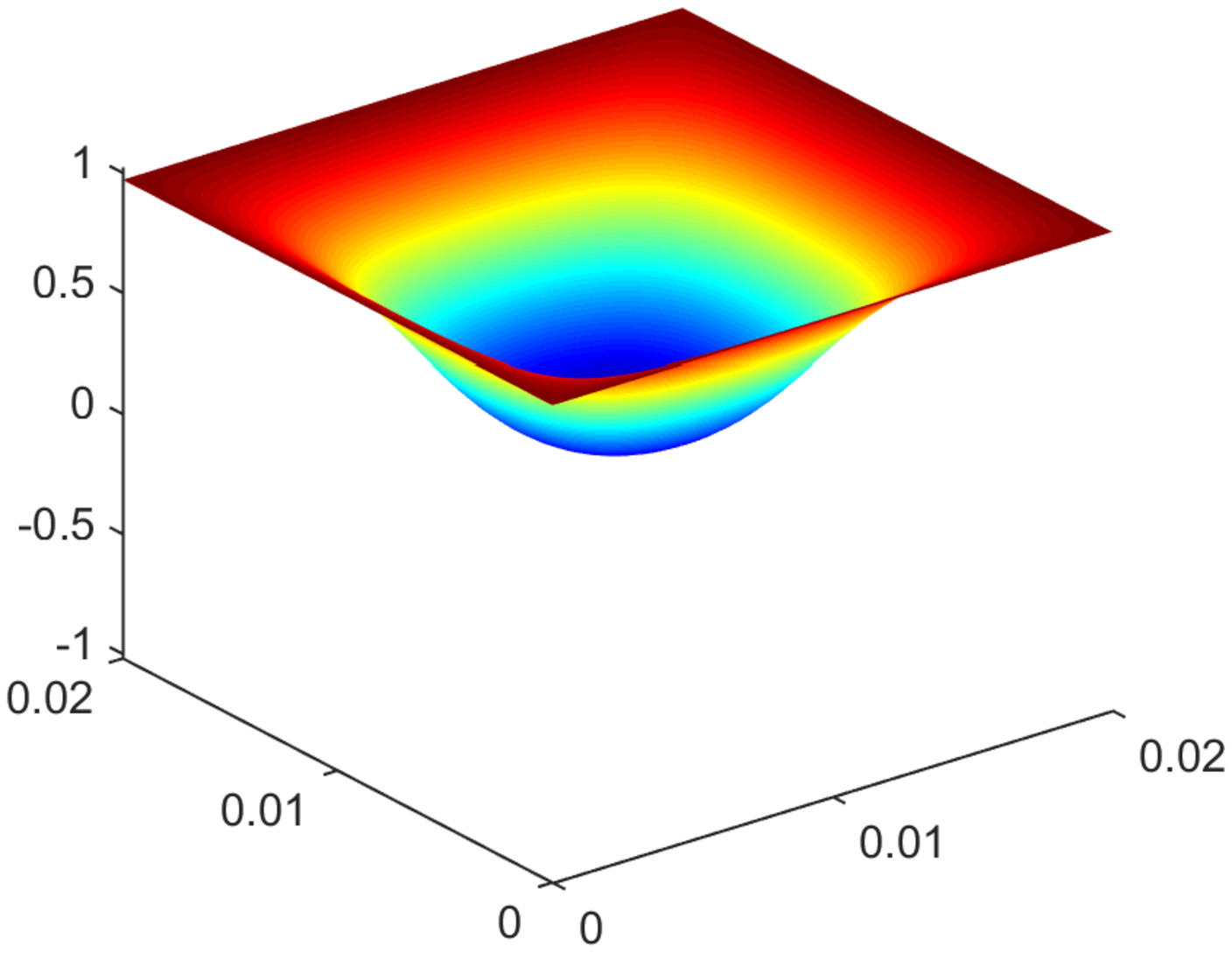}
    \includegraphics*[width=0.3\textwidth]{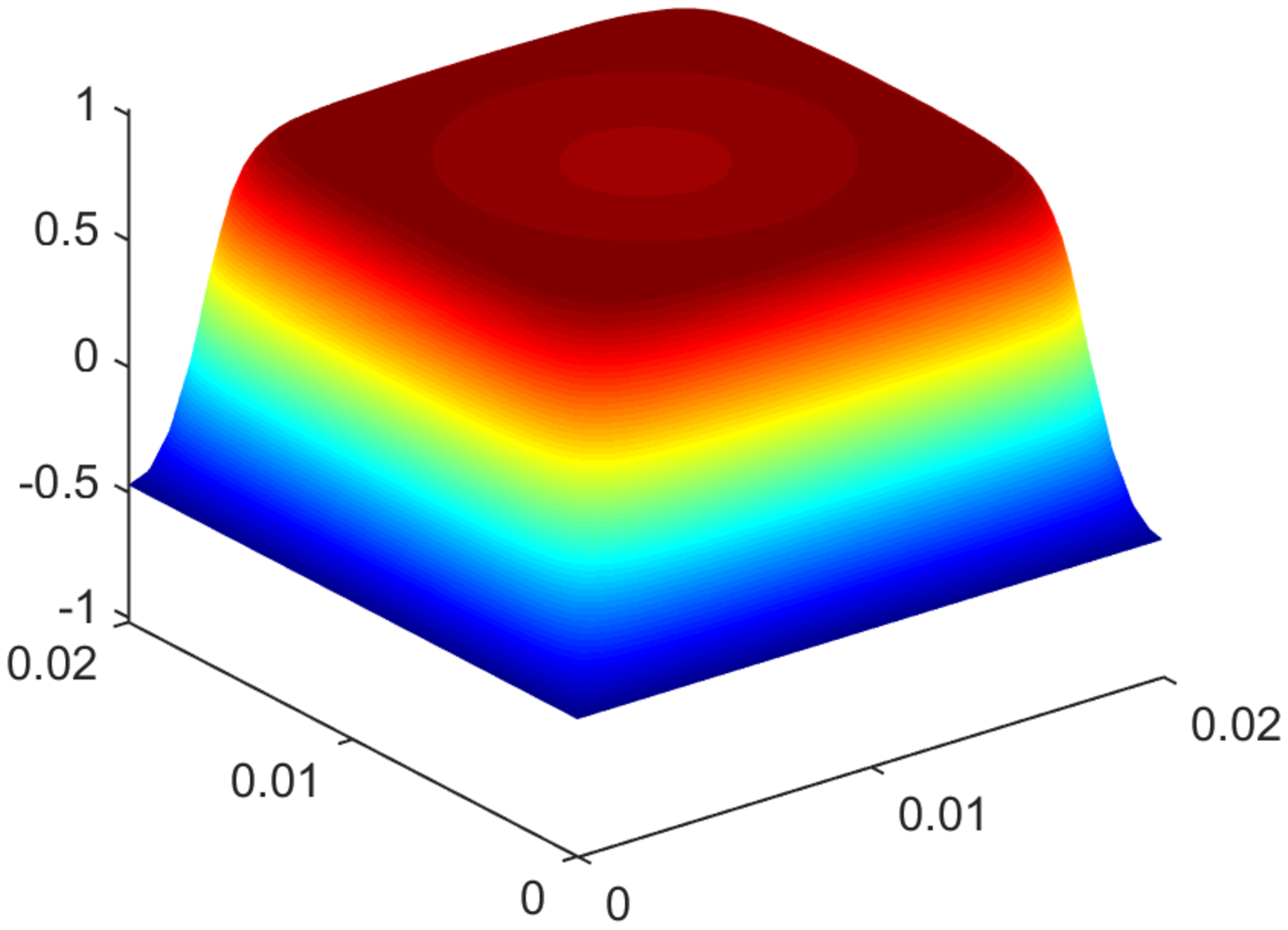}
    \includegraphics*[width=0.3\textwidth]{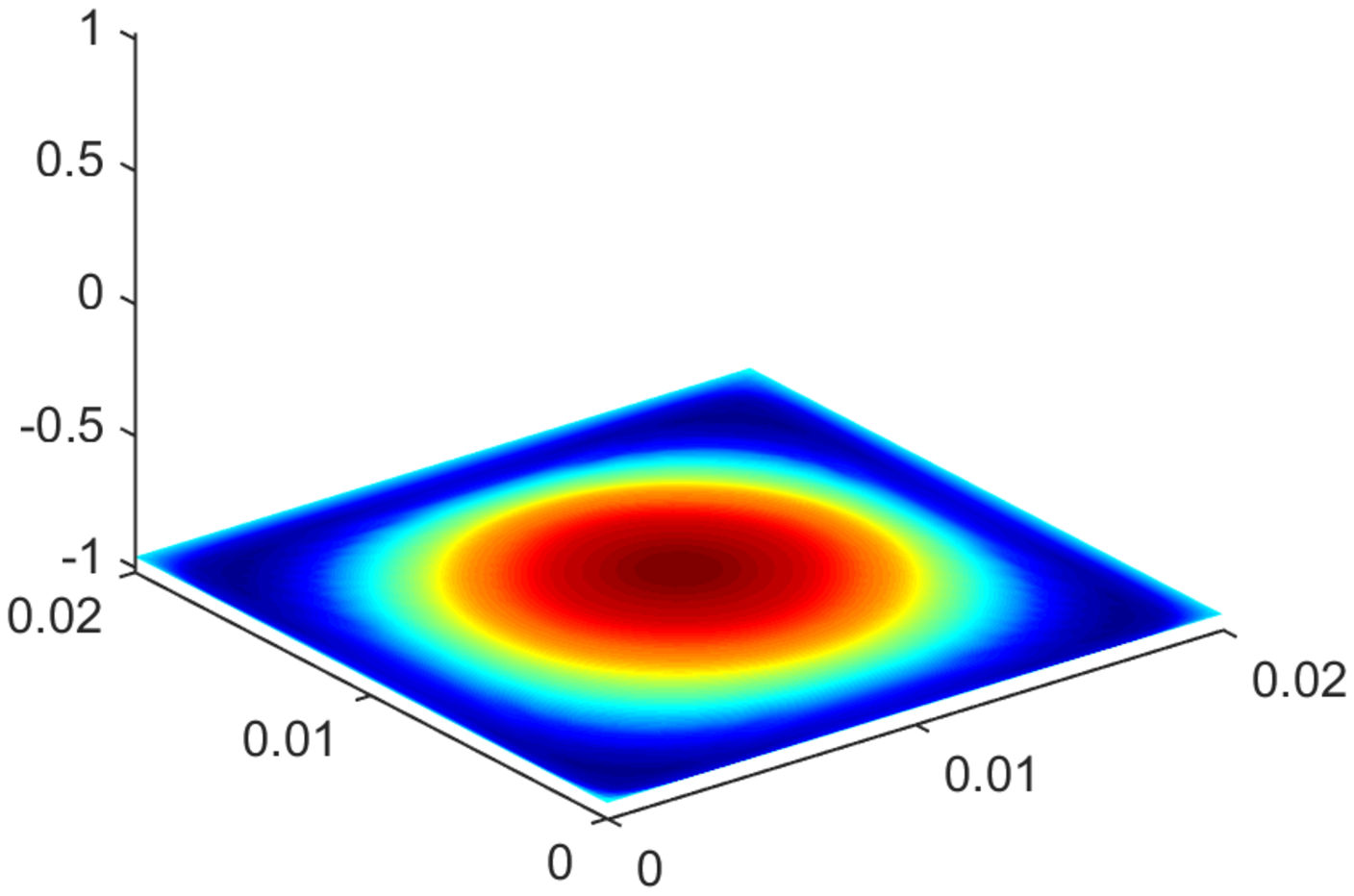}\\
    \includegraphics*[width=0.3\textwidth]{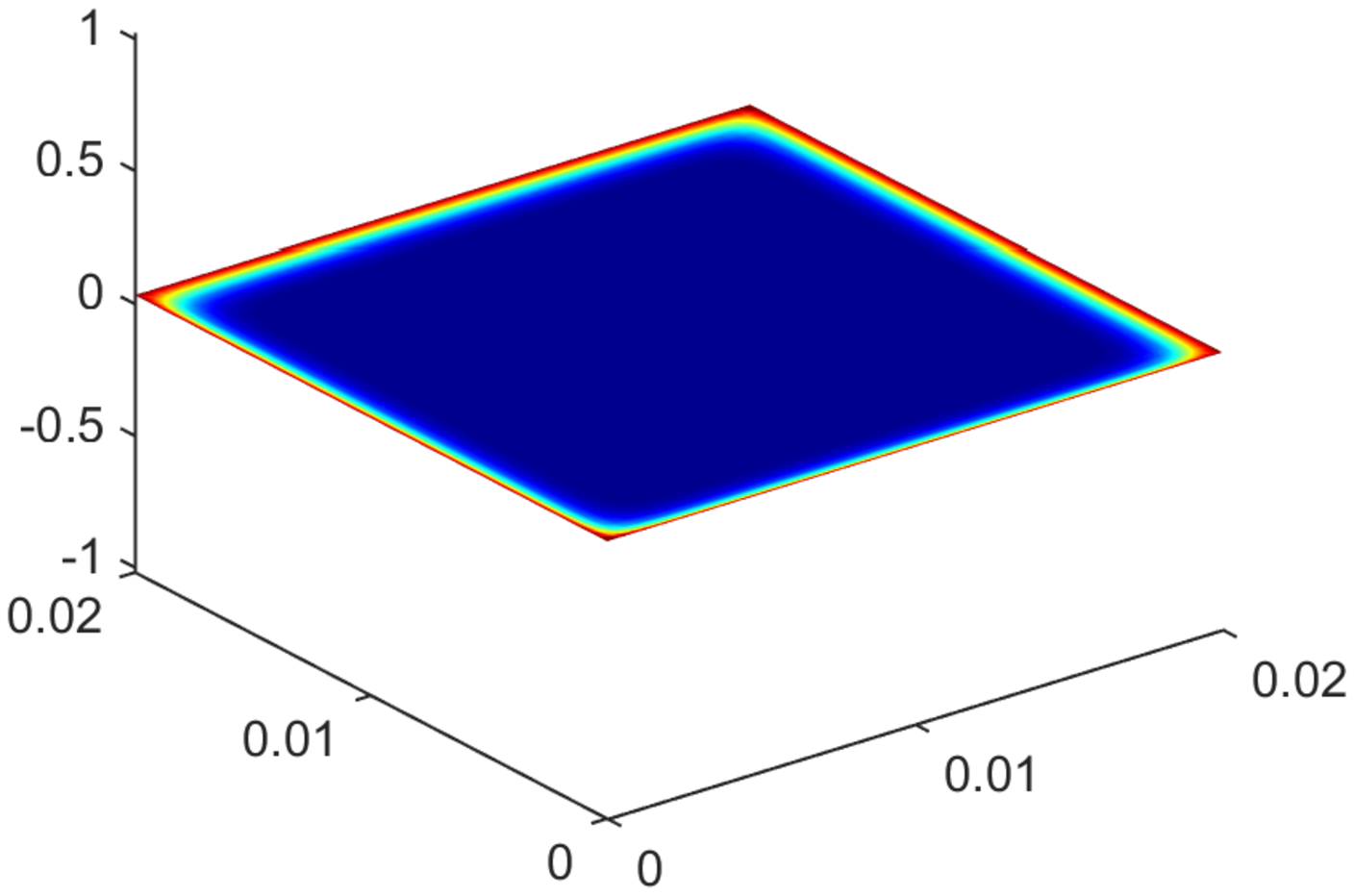}
    \includegraphics*[width=0.3\textwidth]{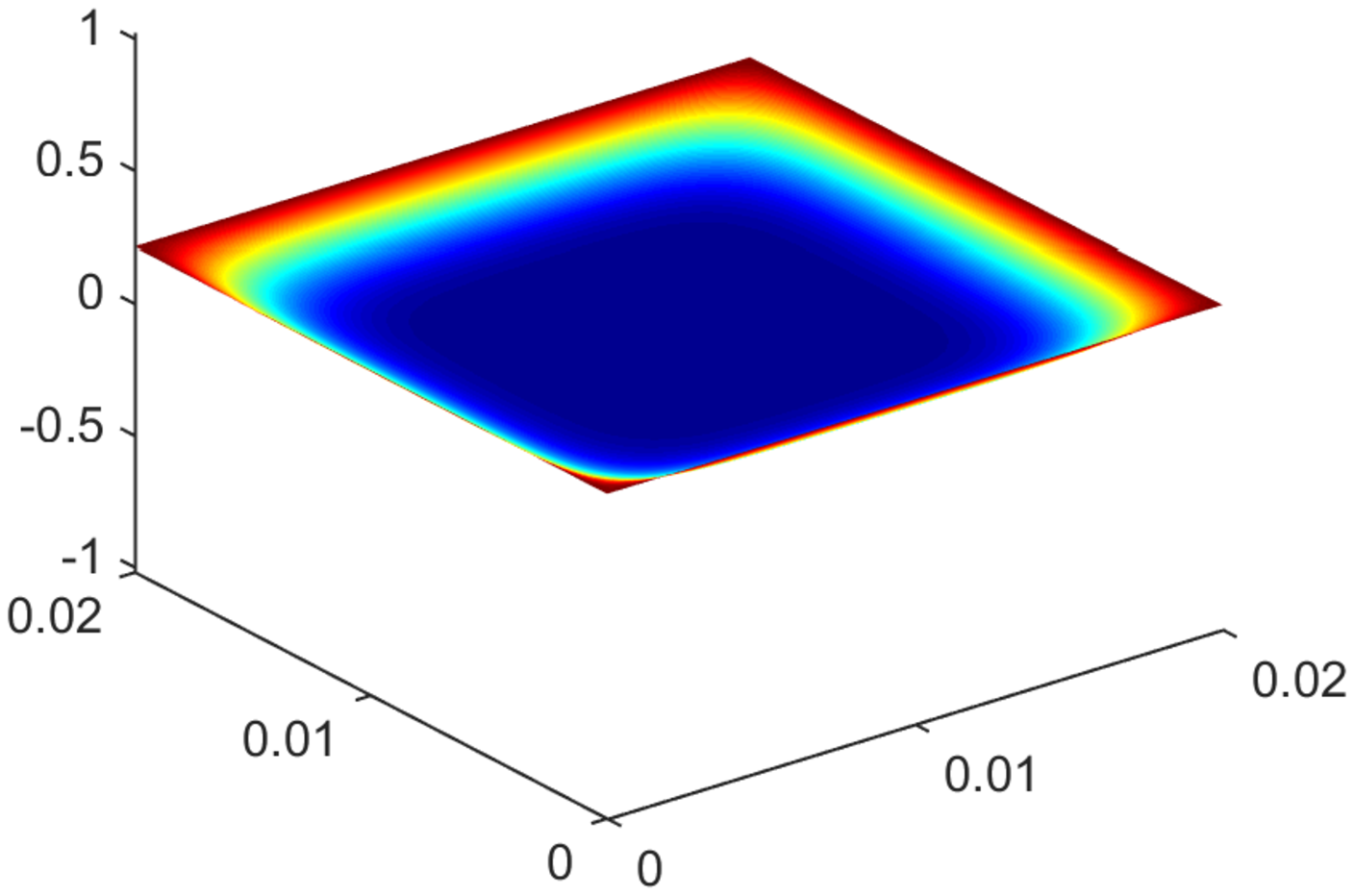}
    \includegraphics*[width=0.3\textwidth]{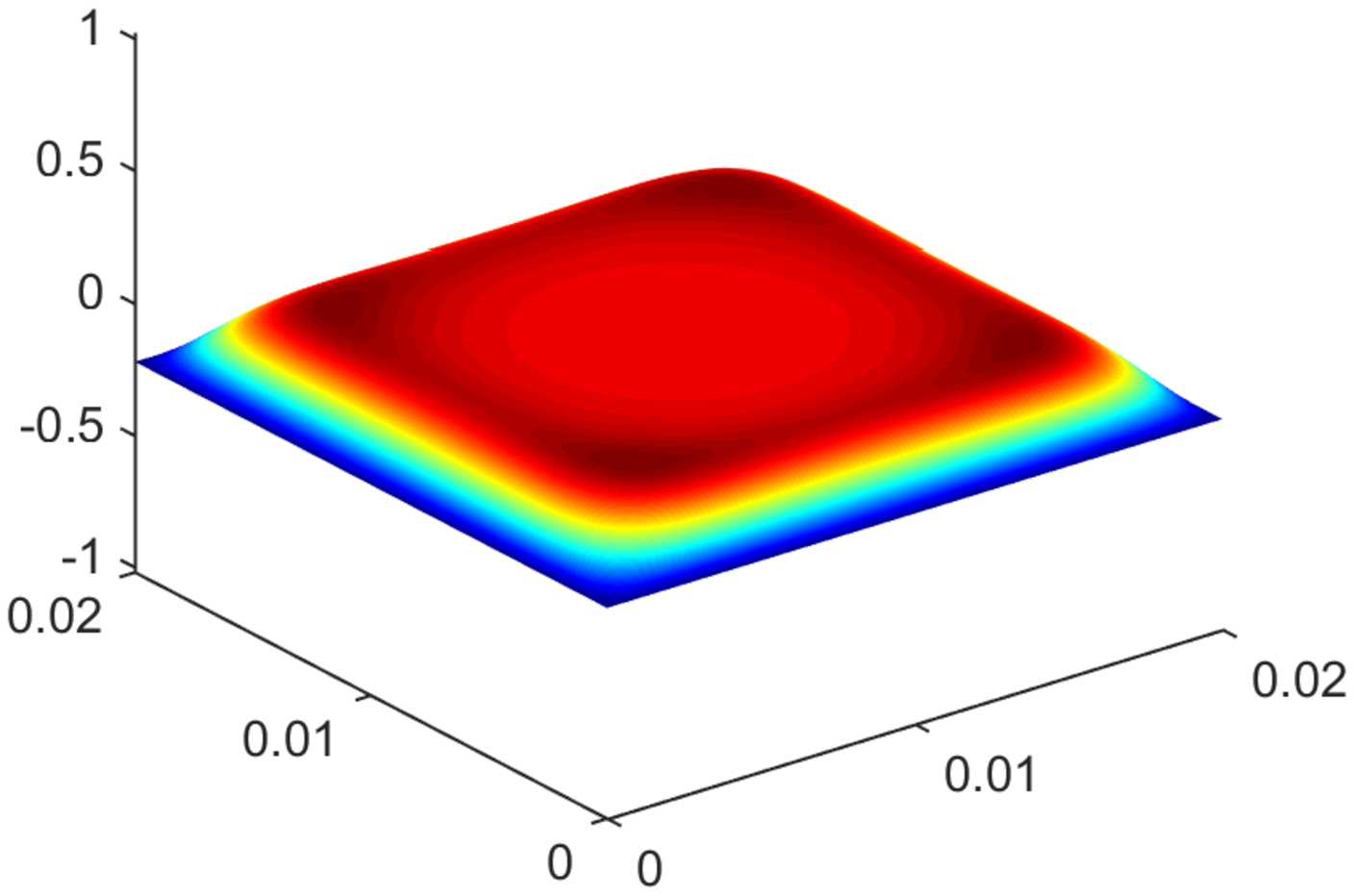}\\
    $t=0.002$s\hspace{0.2\textwidth} $t=0.0055$s \hspace{0.2\textwidth}$t=0.01$s
    \caption{$w$-field solution to problem~\eqref{VF:num_dis} for $k=10^3$ (top) and $k=1$ (bottom).}
    \label{fig:w_field}
    \end{figure}

\section{Conclusion}

In this paper, we have introduced  a functional analytic framework
to study the dynamic Preisach model presented in \cite{B92}.
This framework allow us to prove properties of the operator that
can be used to study a wide range of hysteresis phenomena.
In particular, and  motivated by electromagnetic field equations,
we study the well posedness of a family of PDE's involving hysteresis.
We have noticed that, even though the analysis is similar  to the one
applied for the classical Preisach model,
key changes need to be done for the dynamic case.
We also propose a numerical scheme to approximate a parabolic problem
with dynamic Hysteresis.
The numerical results presented in sections \ref{DPM} and \ref{Num} help us to
become familiar with the role of the $k$-value and the speed of the input
in the dynamic Preisach model and how this parameter affects the solution
of a PDE. A better understanding of the dynamic behavior of solution in processes
with hysteresis is essential to understand the process itself and to develop
appropriate numerical approximations.

\appendix

\section{Proof of Theorem~\ref{eq:existencerelay}}
\label{ap:proof}

Let us introduce the Lipschitz-continuous approximation of the
multi-valued function $\partial\chi_{[-1,1]}$ defined by
$$
(\partial \chi_{[-1,1]})_\mu (x)=\left\{\begin{array}{ll}
\displaystyle \frac{x+1}{\mu} & \mbox{ if } x\leq -1,\\
\displaystyle0  & \mbox{ if } -1<x<1,
\\
\displaystyle \frac{x-1}{\mu} & \mbox{ if } x\geq 1.
\end{array} \right.
$$
Actually, $(\partial \chi_{[-1,1]})_\mu$ is the so-called Yosida
regularization of the multi-valued operator $\partial \chi_{[-1,1]}$. We notice that
 \begin{equation}
 \label{eq:yosida}
(\partial \chi_{[-1,1]})_\mu (x)= \frac{x-P_{[-1,1]}(x)}{\mu}\quad \forall x \in\R
\end{equation}
 where $P_{[-1,1]}$ denotes the projection on the interval $[-1,1]$.

Let us consider an approximation of the  Cauchy problem \eqref{eq:relay} defined as
\begin{align}
\label{eq:reg1}
&\dfrac{dy_\mu}{dt}(t)+(\partial \chi_{[-1,1]})_\mu (y_\mu (t))=g_\rho(u(t)),
\\
&\label{eq:reg2}
y(0)=\xi.
\end{align}
Since $g_{\rho}\in \rL^2(0,T)$ and $(\partial \chi_{[-1,1]})_\mu$
is Lipschitz-continuous this problem has a unique solution from the
Picard-Lipschitz Theorem. Moreover $y_\mu\in \rH^1(0,T)$. In order to
pass to the limit as $\mu\rightarrow 0$ let us obtain some a priori
estimates. For this purpose we multiply both sides by $y_\mu(t)$. We have
$$
\dfrac{dy_\mu}{dt}(t)y_\mu(t)+(\partial \chi_{[-1,1]})_\mu (y_\mu (t))y_\mu(t)=g_\rho(u(t))y_\mu(t)
$$
 and then
 $$
 \frac{1}{2}\frac{d}{dt}|y_\mu(t)|^2\leq |g_\rho(u(t))||y_\mu(t)|.
 $$
 From Gronwall's inequality we deduce
 $$
 |y_\mu(t)|\leq C \quad \forall t\in (0,T)
 $$
 for some constant $C$. Next, we multiply (\ref{eq:reg1}) by
 $\frac{dy_\mu}{dt}(t)$. We get
 \begin{equation}
 \label{eq:reg3}
 |\frac{dy_\mu}{dt}(t)|^2 + (\partial \chi_{[-1,1]})_\mu (y_\mu (t))
 \frac{dy_\mu}{dt}(t)=g_\rho(u(t))\frac{dy_\mu}{dt}(t).
 \end{equation}
 From the chain rule we have
 $$
 (\partial \chi_{[-1,1]})_\mu (y_\mu (t))\frac{dy_\mu}{dt}(t)
 =\frac{d}{dt}(\chi_{[-1,1]})_\mu (y_\mu(t)),
 $$
where $(\chi_{[-1,1]})_\mu(x)$  is a primitive of
$(\partial \chi_{[-1,1]})_\mu(x)$. More precisely,
 $$
( \chi_{[-1,1]})_\mu (x)=\left\{\begin{array}{ll}
\displaystyle \frac{(x+1)^2}{2\mu} & \mbox{ if } x\leq -1,\\
\displaystyle0  & \mbox{ if } -1<x<1,
\\
\displaystyle \frac{(x-1)^2}{2\mu} & \mbox{ if } x\geq 1.
\end{array} \right.
$$
Since $( \chi_{[-1,1]})_\mu$ is non-negative, (\ref{eq:reg3}) yields
 $$
 \int_0^T|\frac{dy_\mu}{dt}(t)|^2\,dt \leq
 \Big(\int_0^T |g_\rho(u(t))|^2dt\Big)^{1/2}\Big(\int_0^T |\frac{dy_\mu}{dt}|^2(t)\,dt\Big)^{1/2}
 $$
 and hence
 $$
 \displaystyle\|\frac{dy_\mu}{dt}\|_{\rL^2(0,T)}\leq \|g_\rho\|_{\rL^2(0,T)}.
 $$
 Thus, the set $\{y_\mu\}$ is bounded in $\rH^1(0,T)$ which implies
 that there exists $y\in \rH^1(0,T)$ and a subsequence $\{y_{\mu_n}\}$ such that
 $$
 \{y_{\mu_n}\} \mbox{ converges to } y \mbox{ weakly in } \rH^1(0,T)
 \mbox{ and strongly in } \rL^2(0,T).
 $$
 Let
 $$q_\mu:= (\partial \chi_{[-1,1]})_\mu (y_\mu (t)).$$
 This means
 \begin{equation}
 \label{eq:qmu}
 q_\mu(t)\Big(x-y_\mu(t)\Big)\leq \Big(\chi_{[-1,1]}\Big)_\mu(x)
-\Big(\chi_{[-1,1]}\Big)_\mu(y_\mu(t)) \quad \forall x \in \R.
 \end{equation}
  Besides, since $\displaystyle q_\mu=g_\rho(u(t))-\dfrac{dy_\mu}{dt}(t)$
   then the set $\{q_\mu\}$  is bounded in $\rL^2(0,T)$ and hence there
   exists $q\in \rL^2(0,T)$ and a subsequence $\{q_{\mu_n}\}$ such that
 $$
 \{q_{\mu_n}\} \mbox{ converges to } q \mbox{ weakly in } \rL^2(0,T).
 $$
 Passing to the limit in (\ref{eq:reg1}) we get
 \begin{equation}
 \label{eq:reg4}
 \dfrac{dy}{dt}(t)+q(t)=g_\rho(u(t)).
 \end{equation}
 Now the goal is to prove that $q(t)\in \partial \chi_{[-1,1]}(y(t))$
 a.e. in $[0,T]$. For this purpose we notice that, since $\{q_\mu\}$
 is bounded in $\rL^2(0,T)$, then (\ref{eq:yosida}) yields
 $$
 \lim_{n\rightarrow \infty}\{ P_{[-1,1]}(y_{\mu_n})\}= y\mbox{ strongly in } \rL^2(0,T),
 $$
 which in particular implies $-1\leq y(t)\leq 1\ \forall t\in[0,T]$.
  Now, let pass to the limit in (\ref{eq:qmu}) for $\mu=\mu_n$. Firstly, we have
 \begin{equation}
 \label{eq:qmulim}
0\leq \int_0^T\Big(\chi_{[-1,1]}\Big)_{\mu_n}(y_{\mu_n}(t))\, dt\leq
-\int_0^T q_{\mu_n}(t)\big(z(t)-y_{\mu_n}(t)\big)\, dt
+\int_0^T \Big(\chi_{[-1,1]}\Big)_{\mu_n}(z(t))\,dt,
 \end{equation}
 for all $z\in \rL^2(0,T)$.
 Then,
 \begin{equation}
 \label{eq:qlim}
 0\leq - \int_0^T q(t)\big(z(t)-y(t)\big)\, dt+\int_0^T\chi_{[-1,1]}(z(t))\,dt
 \end{equation}
because of the strong convergence of $\{y_{\mu_n}\}$, the weak
convergence of $\{q_{\mu_n}\}$ and the fact that
$$
\lim_{n\rightarrow\infty}\int_0^T \Big(\chi_{[-1,1]}\Big)_{\mu_n}(z(t))\,dt
=\int_0^T \chi_{[-1,1]}(z(t))\,dt.
$$
Finally, since $|y(t)|\leq 1$, then
$\chi_{[-1,1]}(y(t))=0\, \forall t$ and we deduce from (\ref{eq:qlim})
$$
 \int_0^T q(t)\big(z(t)-y(t)\big)\, dt\leq
 \int_0^T\chi_{[-1,1]}(z(t))\,dt -\int_0^T\chi_{[-1,1]}(y(t))\,dt
$$
and then $q(t)\in \partial \chi_{[-1,1]}(y(t))\, \mbox{ a.e. in } (0,T)$.

Let us prove uniqueness. Assume that $y_1(t)$ and $y_2(t)$
are two solutions of (\ref{eq:reg1}), (\ref{eq:reg2}).
Firstly $y_1(0)-y_2(0)=0$ and there exist $q_i(t)\in
\partial \chi_{[-1,1]}(y_i(t)),\ i=1,2 $  such that
$$
\dfrac{dy_i}{dt}(t)+q_i(t)=g_\rho(u(t)),\ i=1,2.
$$
Let us subtract the above equations for $i=1,2$ and then
make the scalar product by $y_1-y_2$. We get
$$
\frac{1}{2}\int_0^t\frac{d}{ds}|y_1(s)-y_2(s)|^2 \leq 0
$$
because,
$$
\int_0^t (q_1(s)-q_2(s))(y_1(s)-y_2(s))\,dt \geq 0,
$$
as $\partial \chi_{[-1,1]}$ is monotone.
Hence,
$$
|y_1(t)-y_2(t)|^2\leq |y_1(0)-y_2(0))|^2=0,
$$
which finishes the proof.

\bibliography{BGV_bib}

\end{document}